
\documentclass[12pt,amstex]{amsart}

\usepackage{mathptmx}
\usepackage{mathrsfs}
\usepackage{verbatim}
\usepackage{url}
\usepackage[all]{xy}
\usepackage{color}

\usepackage[colorlinks=true,citecolor=blue]{hyperref}
\usepackage{stmaryrd}
\usepackage{epsfig}
\usepackage{amsmath,bm}
\usepackage{amssymb}
\usepackage{amssymb,amsthm,amsmath}
\usepackage{mathrsfs}
\usepackage{amscd}
\usepackage{graphicx}
\usepackage{pstricks}
\usepackage{theoremref}

\topmargin=0pt \oddsidemargin=0pt \evensidemargin=0pt
\textwidth=15cm \textheight=22cm \raggedbottom



\newtheorem{thm}{Theorem}[section]
\newtheorem{lem}[thm]{Lemma}

\newtheorem{cor}[thm]{Corollary}
\theoremstyle{definition}
\newtheorem{de}[thm]{Definition}

\theoremstyle{remark}
\newtheorem{rem}[thm]{Remark}

\numberwithin{equation}{section}
\def \N {\mathbb{N}}

\def \Z {\mathbb{Z}}

\def \O {\mathcal{O}}

\def \G {\mathcal{G}}

\def \E {\mathcal E}

\def \RP {{\bf RP}}

\def \id {{\rm id}}

\def \ep {\epsilon}
\def \d {\delta}

\def \lra {\longrightarrow}
\def \ra {\rightarrow}

\def \RP {\textbf{RP}}

\begin{document}

\title{On structure theorems and non-saturated examples}
	
\author{Qinqi Wu}
\author{Hui Xu}
\author{Xiangdong Ye}

\address{Department of Mathematics, University of Science and Technology of China, Hefei, Anhui, 230026, P.R. China}

\email{qinqiwu@mail.ustc.edu.cn}
\email{huixu2734@ustc.edu.cn}
\email{yexd@ustc.edu.cn}

\subjclass[2010]{Primary: 37B05, 54H20}
\keywords{Structure theorem, minimal system, saturation, distal factor and pro-nilfactor}

\thanks{The third author is supported by NNSF 12031019}

\date{}
\begin{abstract}
For any minimal system $(X,T)$ and $d\geq 1$ there is an associated minimal system $(N_{d}(X), \G_{d}(T))$,
where $\G_{d}(T)$ is the group generated by $T\times\cdots\times T$ and $T\times T^2\times\cdots\times T^{d}$
and $N_{d}(X)$ is the orbit closure of the diagonal under $\G_{d}(T)$.  It is known that
the maximal $d$-step pro-nilfactor of $N_d(X)$ is $N_d(X_d)$, where $X_d$ is the maximal $d$-step pro-nilfactor of $X$.

In this paper, we further study the structure of $N_d(X)$.
We show that the maximal distal factor of $N_d(X)$ is $N_d(X_{dis})$ with $X_{dis}$ being the maximal distal factor
of $X$, 
and prove that  as minimal systems $(N_{d}(X), \G_{d}(T))$ has the same structure theorem as $(X,T)$.
In addition, a non-saturated metric example $(X,T)$ is constructed, which  is not $T\times T^2$-saturated
and is a Toeplitz minimal system.
\end{abstract}

\maketitle
\markboth{On structure theorems and non-saturated examples}{Wu, Xu, Ye}

\section{Introduction}	

In the 1970's, Furstenberg gave a dynamical proof of the Szemeredi's theorem via establishing a version of
multiple ergodic recurrence  theorem ({\bf MERT} for short), which states that in any measure preserving
system $(X,\mathcal{X}, \mu, T)$, for any $d\in\N$ and $A\in\mathcal{X}$ with $\mu(A)>0$
there always exists $n\in\N$ such that
$\mu(A\cap T^{-n}A\cap\cdots\cap T^{-dn}A)>0$.
A counterpart in topological dynamics is the topological multiple recurrence theorem
({\bf TMRT} for short), which sates that for any nonempty open set $A$ in a minimal
topological dynamical system $(X,T)$ and $d\in \N$, there always exists
$n\in\N$ such that $A\cap T^{-n}A\cap\cdots\cap T^{-dn}A\neq\emptyset$. An easy
implication of {\bf TMRT} is the classical van der Waerden's theorem on arithmetic progressions.

The multiple recurrence theorems both in ergodic theory and topological dynamics are focusing on studying
the orbit of $(x,x,\cdots, x)$ under $\tau_{d}(T)=T\times \cdots\times T^{d}$ in the product space
$X\times \cdots\times X$. In ergodic theory, this can be captured by the limits of the following
multiple ergodic averages
\begin{equation}\label{eq1.1}
\frac{1}{N}\sum_{k=0}^{N-1}f_1(T^kx)f_2(T^{2k}x)\cdots f_{d}(T^{dk}x),
\end{equation}
the convergence in $L^2(\mu)$ of which has been finally solved in \cite{HK, Zieg}  after nearly 30 years'
efforts by many mathematicians.  In order to study the averages in (\ref{eq1.1}), the characteristic
factors were introduced to capture the limits of (\ref{eq1.1}), which were finally shown to be an efficient way.

In \cite{G94}, Glasner established  counterparts in topological dynamics which are called  topological
characteristic factors. Given a factor map $\pi:(X,T)\ra (Y,T)$ between minimal topological dynamical
systems and $d\geq 2$,  $(Y,T)$ is said to be a {\it $d$-step topological characteristic factor} of
$(X,T)$ if there is a dense $G_{\delta}$ subset $\Omega$ of $X$ such that for any $x\in \Omega$, the orbit
closure $L_x=\overline{\O}((x,\cdots,x), \tau_{d}(T))$ is $\pi^{(d)}:=\pi\times\cdots\times\pi$-{\it saturated},
which means that $(\pi^{(d)})^{-1}(\pi^{(d)}L_x)=L_x$. Furthermore, a closely related system $N_d(X)$ under
the action of the group $\G_d(T):=\langle \sigma_d(T),\tau_d(T)\rangle$ generated by
$\sigma_d(T):=T\times\cdots\times T$ ($d$-times) and $\tau_d(T)$ was also studied in \cite{G94},
where $N_d(X)$ is the orbit closure of $(x,\cdots,x)$ under $\G_d(T)$. It was shown in \cite{G94}
that $(N_d(X), \G_d(T))$ is also minimal for a minimal topological dynamical system $(X,T)$.

In \cite{GHSWY},  the authors further studied the topological characteristic factors and the dynamics of $(N_d(X), \G_d(T))$.
Up to an almost  1-1 modification, they showed that the topological characteristic factors are the pro-nilfactors
(see \cite[Theorem A]{GHSWY}), which are the analogies in the ergodic situation. For other study on topological characteristic factors,
see \cite{CS-1,CS-2}.

Another interesting and useful
result they showed  on the dynamics of $(N_d(X), \G_d(T))$ is the following theorem \cite[Theorem C]{GHSWY}.

\medskip

\noindent {\bf Theorem A}:\label{thmA}
{\it Let $(X,T)$ be minimal and $n\in\N$. Then $(X,T^n)$ is minimal if and only if $N_d(T^n)=N_d(T)$ for any $d\in\N$.
}

\medskip

\noindent By this theorem, they showed that if $(X,T^2)$ is minimal, then {\bf TMRT} holds along odd numbers
(actually they showed more, see \cite[Theorem D]{GHSWY}) and  for a totally minimal system $(X,T)$ and an
integral quadratic polynomial $p(n)=an^2+bn+c$ with $a\neq0$, there is a dense $G_{\delta}$ subset $\Omega$
of $X$ such that for any $x\in \Omega$, $\{T^{p(n)}x: n\in\Z\}$ is dense in $X$ (see \cite[Theorem E]{GHSWY}).

According to their powerful structure theorem of topological characteristic factors, they determined the maximal
equicontinuous factors of $N_d(X)$ (\cite[Theorem B]{GHSWY}), which says that the maximal equicontinuous factor
of $N_d(X)$ is the very $N_d(X_{eq})$, where $X_{eq}$ is  the maximal equicontinuous factor of $X$. This is also
crucial in the proof of Theorem A. Note that recently, Lian and Qiu \cite{LQ} showed
that for any $k\in\N$, the maximal $k$-step nilfactor
of $N_d(X)$ is $N_d(X_k)$, where $X_k$ is the maximal $k$-step pro-nilfactor of $X$.

\medskip
In this paper, we will give another approach to Theorem A. That is, starting from the fact that the maximal equicontinuous factor
of $N_d(X)$ is $N_d(X_{eq})$, we use an equivalence condition for $N_d(T)=N_d(T^n)$ (Theorem \ref{equivalence}) to the get the conclusion,
instead of converting to the equicontinuous cases as did in \cite{GHSWY}.
\medskip

Based on the results obtained in \cite{GHSWY, LQ} one naturally expect that the maximal distal factor of $N_d(X)$ should be $N_d(X_{dis})$, where $X_{dis}$ is  the maximal distal factor of $X$. We show that indeed it is the case.

\medskip
\noindent {\bf Theorem B}:\label{thmB}
{\it Let $(X,T)$ be a minimal topological dynamical system with $X_{dis}$ being its maximal distal factor.
Then for any $d\geq 2$, the maximal distal factor of $N_d(X)$ is $N_d(X_{dis})$.}

\medskip

A powerful tool in studies of minimal systems is the structure theorem. In 1963, Furstenberg in \cite{Fur63}
established the structure theorem for minimal distal systems.
Then the structure theorem for pointed distal minimal systems was
established by Veech in \cite{Veech}.
Finally, Ellis, Glasner and Shapiro in \cite{EGS75}, McMahon \cite{Mc78}, Veech \cite{V83} gave the structure theorem for general minimal systems.  In this paper, we will show the following 
result for $N_d(X)$ as a minimal system associated to $(X,T)$.

\medskip
\noindent {\bf Theorem C}:\label{thmC}
{\it Let $(X,T)$ be a minimal topological dynamical system and $d\in \N$. Then $(N_d(X), \G_d(T))$ has the same
structure theorem as $(X,T)$. In particular, if $(X,T)$ is distal (resp. {\bf PI, HPI}), then so is $(N_d(X), \G_d(T))$.}

\medskip
The key ingredient to show the minimality of $N_d$ under the $\G_d$ action is that if $(x_1,\ldots,x_d)$ is minimal for $\sigma_d$
then it is minimal for $\G_d$. To show Theorem C, we need a generalization of this fact. Namely, let $n, d\in\N$. For $(x_i^1)_{1}^d, \ldots, (x_i^n)_{1}^d\in N_d(X)$, if $((x_i^1)_{1}^d, \ldots, (x_i^n)_{1}^d)$ is a minimal point in $(X^{n}, T)$ under the diagonal action, then
$ ((x_i^1)_{1}^d, \ldots, (x_i^n)_{1}^d)$ is a minimal point in $ ((N_{d}(X))^{n}, \G_{d}(T))$ under the diagonal action, see Lemma \ref{appi-Nd}.

\medskip
As pointed in \cite{GHSWY}, the almost 1-1 modification in their structure theorem for topological characteristic
factors is necessary due to an example given in \cite{G94}, which describes an almost automorphic  system whose
maximal equicontinuous factor is not its characteristic factor. Unfortunately,  the example given in \cite{G94}
is not a metriziable system. Here we will construct a metric minimal system which is an almost 1-1 extension
of its maximal equicontinuous factor such that the factor map is not saturated. We state it as follows.

\medskip
\noindent {\bf Theorem D}:\label{thmD}
{\it There exists a minimal topological dynamical system $(X,T)$   such that
\begin{enumerate}
\item  $\pi: (X,T)\ra (X_{eq}, T)$ is an almost 1-1 extension, where $X_{eq}$ is its maximal equicontinuous factor;
\item $(X_{eq}, T)$ is not a topological characteristic factor of $(X,T)$, i.e. there does not exist a dense $G_{\delta}$ subset $\Omega$ of $X$ such that for any $x\in \Omega$, $L_x=\overline{\O}((x,x), T\times T^2)$ is $\pi\times\pi$-saturated.
\end{enumerate}
}
\medskip
Actually, the system we construct is an irregular Toeplitz system.

\medskip
\noindent{\bf Organization of the paper}. In Section 2, we give some preliminaries. In Section 3, we give
a proof of Theorem A. We prove Theorems B and  C in Sections 4 and  5 respectively. In Section 6 we
present some saturated examples and finally we prove Theorem D in Section 7 via constructing a desired Toeplitz sequence.

\section{Preliminaries}

In this section we provide notions and lemmas needed for our proofs.
\subsection{General topological dynamical systems }
By a {\it topological dynamical system} (t.d.s. or system for short) we mean a pair $(X, T)$ with $X$ being a
compact Hausdorff space and $T$ being a homeomorphism of $X$. More generally, by  a  topological dynamical system we
mean a triple $(X, G, \phi)$ with a compact metric space $X$, a topological group $G$ and a homomorphism
$\phi: G\ra {\rm Homeo}(X)$, where ${\rm Homeo}(X)$ denotes the group of homeomorphisms of $X$.
For brevity, we usually use $(X,G)$ to denote $(X,G,\phi)$and use
$gx$ or $g(x)$ instead of $(\phi(g))(x)$ for $g\in G$ and $x\in X$.  Note that the system $(X,T)$ corresponds
to the case of $G$ being $\Z$.

In this paper, we always assume that $X$ is a compact metric space with a metric $\rho$. Let $(X,G)$ be a t.d.s.
For $x\in X$,  the orbit of $x$ under the action of $G$ is defined to be $\{gx: g\in G\}$, which is denoted by
$Gx$ or $\O(x, G)$ (or $\O(x, T)$ in case of $G=\Z$). Correspondingly, $\overline{Gx}$ or $\overline{\O}(x, G)$
will denote the orbit closure of $x$.  Similarly, for a subset $A\subset X$,  the orbit of $A$ is
$\{gx: x\in A, g\in G\}$, which is denoted by $G(A)$ or $\O(A,G)$.

 A subset $A\subset X$ is called {\it invariant} under the action of $G$ or $G$-invariant if $G(A)=A$.  A t.d.s. $(X,G)$ is said to be  {\it minimal} if there is no nonempty proper $G$-invariant closed subset.  Note that $(X,G)$ is minimal if and only if the orbit closure of every point is dense in  $X$.  If there is a point of $X$ whose orbit is dense in $X$ then we say that the system is {\it topologically transitive}.  Furthermore, if the product system $(X\times X, T)$ under the diagonal action (i.e. $T(x,y)=(Tx,Ty)$) is topologically transitive, then we say that $(X, T)$ is {\it weakly mixing}.

A {\it factor map}  between the t.d.s. $(X,G)$ and $(Y,G)$ is a continuous surjective
map $\pi: X\ra Y$ which intertwines the actions, i.e., $\pi(gx)=g\pi(x)$ for any $g\in G$ and $x\in X$. In this case, we say that $(Y,G)$ is a {\it factor} of $(X,G)$ or $(X,G)$ is an {\it extension} of $(Y,G)$. Let $\pi: (X, T)\ra (Y, T)$ be a factor map. Then $R_{\pi}=\{(x,y)\in X\times X: \pi(x)=\pi(y)\}$ is a closed $G$-invariant equivalence relation and $Y\cong X/R_{\pi}$.

Let $X$ be a set and $d\in\N$ with $d\geq 2$.  The diagonal of $X^d$ is defined to be
$$\Delta_{d}(X)=\{(x,x,\cdots,x): x\in X\}.$$
We use $\Delta(X)$ to denote $\Delta_2(X)$ for short. For $x\in X$, let $x^{(d)}$ denote $(x,x,\cdots,x)$ ($d$-times) for short.

\subsection{Distal, proximal and regionally proximal relations }
Let $(X, G)$ be a t.d.s.. Given $(x,y)\in X^2$, it is a {\it proximal} pair if
$\liminf_{g\in G}\rho(gx,gy)= 0$; it is a {\it distal} pair if it is not proximal.
Denote by ${\bf P}(X,G)$ the set of proximal pairs of $(X,G)$. It is also called the
{\it proximal relation}.   Note that the proximal relation is $G$-invariant but not
an equivalence 	relation in general. However, when ${\bf P}(X,G)$ is closed in $X\times X$,
it is an equivalence relation (see \cite[Chapter 6, Corollary 11]{Aus}). $(x,y)\in X^2$ is a
{\it regionally proximal} pair if there exist sequences $(x_n)$, $(y_n)$ of $X$ with
$x_n\ra x, y_n\ra y$ and a sequence $(g_n)$ of $G$ such that $\rho(g_nx_n, g_ny_n)\ra 0$.
The set of regionally proximal pair of $(X,G)$ is denoted by ${\bf RP}(X,G)$, which is
a closed invariant relation but not an equivalence relation in general. However,
when $G$ preserves an invariant probability measure ${\bf RP}(X,G)$ is an equivalence relation (see \cite[Chapter 9]{Aus}).

A t.d.s. $(X,G)$ is {\it distal} if ${\bf P}(X,G)=\Delta(X)$, and is {\it equicontinuous} if for every $\epsilon>0$
there exists $\delta>0$ such that $\rho(x,y)<\delta$ implies $\rho(gx,gy)<\epsilon$ for every $g\in G$.
Any equicontinuous system is distal. It is well known that every t.d.s. has a maximal equicontinuous factor,
and a maximal distal factor respectively. Let $S_{eq}$ (resp. $S_{dis}$) be the smallest closed
invariant equivalence relation on $X$ for which the factor $X/S_{eq}$ (resp. $X/S_{dis}$) is an equicontinuous
(resp. distal) system. It was shown in \cite{EG} that $S_{eq}$ (resp. $S_{dis}$) is the smallest closed invariant equivalence relation containing the regionally proximal relation (resp. proximal relation).

In \cite{JandG02}, Auslander and Glasner defined the capturing operation,
a kind of reverse orbit closure, to characterize the equicontinuous and distal relations in minimal systems.
If $A\subset X$, the capturing set of $A$ is defined by $$C(A)= \{x\in X:\overline{\O}(x,G) \cap A\ne\varnothing \}.$$
 We say that $A$ is {\it a capturing set} if $C(A)= A$.
For a symmetric and reflexive relation $R$,  set $\E({R})= \bigcup\{R^{n}: n=1,2,\ldots \}$, where
\[ R^{n}=\{(x,z): \exists y_1,\cdots, y_{n-1} ~~s.t.~~ (x,y_1), (y_1,y_2),\cdots, (y_{n-2},y_{n-1}),(y_{n-1},z)\in R\}.\]
In \cite{JandG02}, the authors showed that  $S_{dis}(X)$ and $S_{eq}(X)$ are capturing sets in $(X\times X,G)$ and
\begin{thm}\cite[Theorem 4.5]{JandG02}\label{JG}
Let $(X,G)$ be a minimal system. Then $S_{dis}(X)= C(\overline{\E({\overline{{\bf P}}})})$.
\end{thm}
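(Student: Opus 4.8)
The plan is to prove the two inclusions $C(\overline{\E(\overline{{\bf P}})})\subseteq S_{dis}(X)$ and $S_{dis}(X)\subseteq C(\overline{\E(\overline{{\bf P}})})$ separately; since the proximal relation ${\bf P}={\bf P}(X,G)$ is in general neither closed nor transitive, the right-hand side should be read as the result of repairing both defects of ${\bf P}$ (close it, generate an equivalence relation, close it again) and then closing up under reverse orbits via $C$. The inclusion $C(\overline{\E(\overline{{\bf P}})})\subseteq S_{dis}(X)$ is the soft one. First, $\overline{\E(\overline{{\bf P}})}\subseteq S_{dis}(X)$: by the description recalled above, $S_{dis}(X)$ is a closed $G$-invariant \emph{equivalence} relation containing ${\bf P}$, hence it contains $\overline{{\bf P}}$, hence (by transitivity) every iterate $\overline{{\bf P}}^{n}$, hence $\E(\overline{{\bf P}})=\bigcup_{n}\overline{{\bf P}}^{n}$, and hence (being closed) $\overline{\E(\overline{{\bf P}})}$. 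As $C$ is monotone and, as recalled above, $S_{dis}(X)$ is a capturing set, $C(\overline{\E(\overline{{\bf P}})})\subseteq C(S_{dis}(X))=S_{dis}(X)$.

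For the reverse inclusion put $R=C(\overline{\E(\overline{{\bf P}})})$; it suffices to prove that $R$ is a closed $G$-invariant equivalence relation. Indeed, ${\bf P}\subseteq\overline{\E(\overline{{\bf P}})}\subseteq R$ (using the trivial $A\subseteq C(A)$), and $S_{dis}(X)$ is by definition the \emph{smallest} closed $G$-invariant equivalence relation containing ${\bf P}$, so this gives $S_{dis}(X)\subseteq R$. Reflexivity, symmetry and $G$-invariance of $R$ are routine, passing from $\overline{\E(\overline{{\bf P}})}$ to $R$ by means of $A\subseteq C(A)$, the identity $\overline{\O}(gx,G)=\overline{\O}(x,G)$ for $g\in G$, and the observation that $\overline{\O}((y,x),G)$ is the coordinate flip of $\overline{\O}((x,y),G)$. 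The heart of the matter is that $R$ is \emph{closed} and \emph{transitive}.

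For these I would work in the enveloping semigroup $E=E(X,G)$ acting diagonally on $X\times X$, so that $\overline{\O}((x,y),G)=E(x,y)$ and the closed $G$-invariant set $B:=\overline{\E(\overline{{\bf P}})}$ is $E$-invariant. Two facts drive the argument: (i) for any minimal idempotent $u\in E$ and any $w\in X$ the pair $(w,uw)$ is proximal, hence belongs to ${\bf P}\subseteq B$; and (ii) $(x,y)\in R$ if and only if $(px,py)\in B$ for some $p$ in a minimal left ideal of $E$ — equivalently, some minimal subset of $\overline{\O}((x,y),G)$ lies in $B$ — which holds because $B\cap\overline{\O}((x,y),G)$ is nonempty, closed and $G$-invariant, hence contains a minimal set. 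Granting (i) and (ii): for transitivity, from $(x,y),(y,z)\in R$ one chooses a single minimal left ideal and an idempotent $u$ in it adapted to both pairs at once, so that the joining point $uy$ is common and $ux,uy,uz$ sit in appropriate cells of $B$, and then uses the genuine transitivity of $\E(\overline{{\bf P}})$ to conclude that the resulting pair over $(x,z)$ again lies in $B$, so $(x,z)\in R$; for closedness, from $(x_n,y_n)\to(x,y)$ with $(x_n,y_n)\in R$ one picks $p_n$ in minimal left ideals with $(p_nx_n,p_ny_n)\in B$, passes to a subnet with $p_n\to p$ in the compact semigroup $E$, and combines the continuity of $q\mapsto qw$ on $E$ with (i) to exhibit a point of $\overline{\O}((x,y),G)$ in the closed set $B$.

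The step I expect to be the main obstacle is exactly this verification that $R$ is a closed equivalence relation. The capturing operation in general destroys both properties: for instance $C(\Delta(X))$ is the proximal relation, which need not be closed. Thus closedness and transitivity of $R$ have to be extracted from the particular structure of $\overline{\E(\overline{{\bf P}})}$ — above all from its containing $\overline{{\bf P}}$, hence every proximal cell through a minimal idempotent — together with a careful bookkeeping of how minimal left ideals of $E$ act on the two coordinates of a pair. Once $R$ is known to be a closed $G$-invariant equivalence relation containing ${\bf P}$, the theorem is immediate from the minimality property of $S_{dis}(X)$.
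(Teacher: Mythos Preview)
The paper does not prove this theorem at all: Theorem~\ref{JG} is stated with a citation to \cite[Theorem 4.5]{JandG02} and is used as a black box in the proof of Theorem~B. There is therefore no ``paper's own proof'' to compare your attempt against.

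On the substance of your sketch: the overall architecture is exactly the standard one, and the easy inclusion together with the reduction of the hard inclusion to ``$R=C(\overline{\E(\overline{{\bf P}})})$ is a closed $G$-invariant equivalence relation'' is correct. The gaps you flag are real, though, and your outlined Ellis-semigroup argument does not quite close them. For closedness you write ``pass to a subnet with $p_n\to p$ \ldots\ continuity of $q\mapsto qw$'', but here both $p_n$ and the base points $(x_n,y_n)$ vary, and the action of $E$ is only separately continuous; one cannot conclude $p_n x_n\to px$. For transitivity, even after moving both witnesses into a single minimal left ideal $I$, the joining idempotents for $(x,y)$ and $(y,z)$ are in general different, and the natural bridge $(uy,vy)\in{\bf P}\circ{\bf P}$ only gives a chain lying in $\E(\overline{{\bf P}})\circ \overline{\E(\overline{{\bf P}})}\circ\E(\overline{{\bf P}})$; it is not obvious that this composite is contained in $\overline{\E(\overline{{\bf P}})}$, because the closure of a transitive relation need not be transitive. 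What actually makes the argument go through in \cite{JandG02} is a sharper use of the algebra of $E$: one shows that for a closed invariant $B\supset\overline{{\bf P}}$ with $B=\overline{\E(B)}$ the condition ``some minimal subset of $\overline{\O}((x,y))$ meets $B$'' is equivalent to ``$u(x,y)\in B$ for a \emph{fixed} minimal idempotent $u$'', whence $R=u^{-1}(B)$ is visibly closed, and transitivity is then handled by the group structure of $uI$ rather than by composing closures. Your plan is on the right track, but these refinements are where the work lies.
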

\begin{lem}\cite[Theorem 2.1]{JandG02}\label{liftofdis}
Let $\pi: (X,T)\rightarrow (Y,T)$ be an extension between minimal systems.  Then $\pi(S_{dis}(X))=S_{dis}(Y)$.
\end{lem}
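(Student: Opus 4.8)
The plan is to prove the two inclusions $(\pi\times\pi)(S_{dis}(X))\subseteq S_{dis}(Y)$ and $S_{dis}(Y)\subseteq(\pi\times\pi)(S_{dis}(X))$ separately; throughout I write $\pi$ also for the factor map $\pi\times\pi\colon (X\times X,T)\ra (Y\times Y,T)$ between the (not necessarily minimal) product systems. For the first inclusion: composing $\pi$ with the canonical projection $q\colon (Y,T)\ra (Y/S_{dis}(Y),T)$ gives a factor map from $(X,T)$ onto the distal system $Y/S_{dis}(Y)$, and since $S_{dis}(X)$ is by definition the smallest closed invariant equivalence relation on $X$ with distal quotient, $S_{dis}(X)\subseteq R_{q\c\pi}$; hence $(x,x')\in S_{dis}(X)$ forces $\pi(x)$ and $\pi(x')$ into the same $S_{dis}(Y)$-class, i.e. $\pi(S_{dis}(X))\subseteq S_{dis}(Y)$.

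For the reverse inclusion the starting point is Theorem \ref{JG}, which gives $S_{dis}(Y)=C(\overline{\E(\overline{{\bf P}(Y)})})$ (the capturing taken in $(Y\times Y,T)$). The basic tool is a strong form of proximal lifting: if $(y,y')\in{\bf P}(Y)$ and $x\in\pi^{-1}(y)$, pick a minimal idempotent $v$ of the enveloping semigroup with $vy'=y'$ (possible, $y'$ being an almost periodic point); then $vy=y'$ (a minimal idempotent fixing one coordinate of a proximal pair sends the other to it), so $x':=vx\in\pi^{-1}(y')$, and $(x,x')=(x,vx)$ is a proximal pair of $X$. Since ${\bf P}(X)\subseteq S_{dis}(X)$ by \cite{EG}, applying this lemma edge by edge along a finite chain — lift the first edge from an arbitrary preimage, then feed the endpoint just obtained as the prescribed preimage for the next edge, and so on — yields $\E({\bf P}(Y))\subseteq\pi(S_{dis}(X))$; and as $S_{dis}(X)$ is compact, $\pi(S_{dis}(X))$ is closed, so it also contains $\overline{{\bf P}(Y)}$. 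If one then shows that the closed invariant relation $\pi(S_{dis}(X))$ in fact contains all of $\overline{\E(\overline{{\bf P}(Y)})}$ and is itself a capturing set, one concludes $\pi(S_{dis}(X))=C(\pi(S_{dis}(X)))\supseteq C(\overline{\E(\overline{{\bf P}(Y)})})=S_{dis}(Y)$, which finishes the proof. The capturing bookkeeping is eased by two general facts about factor maps: $\pi^{-1}(C(A))=C(\pi^{-1}(A))$ for any $A\subseteq Y\times Y$ (since $\pi$ carries orbit closures onto orbit closures), and every minimal subset of a capturing set $C(B)$ with $B$ closed invariant lies inside $B$ — so, in particular, every minimal subset of $S_{dis}(Y)$ already lies in $\overline{\E(\overline{{\bf P}(Y)})}$, reducing the problem to lifting almost periodic pairs of $\overline{\E(\overline{{\bf P}(Y)})}$ and transporting their orbit closures back along $\pi$.

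The step I expect to be the genuine obstacle is the passage through the \emph{inner} closure $\overline{{\bf P}(Y)}$ (and, to a lesser extent, the outer one): the clean sequential chain argument handles edges that are honest proximal pairs, but an edge of $\overline{{\bf P}(Y)}$ must be lifted, from a prescribed preimage of its first coordinate, into $S_{dis}(X)$, and this cannot be done by naively approximating such an edge by nearby proximal edges, because the fibres of $\pi$ need not vary lower–semicontinuously — indeed ``$(y,y')\in\overline{{\bf P}(Y)}$ and $vy'=y'$'' does \emph{not} force $vy=y'$, as that would make $(y,y')$ proximal. Overcoming this requires exploiting the algebraic structure of minimal systems (the enveloping semigroup, minimal left ideals, and their idempotents) to upgrade the lifting lemma past the closure, using only the coordinatewise convergence available there rather than joint continuity; once that is in place, the finite-chain argument, a routine compactness argument for the outer closure, and the capturing identities above combine to give $S_{dis}(Y)\subseteq\pi(S_{dis}(X))$, hence equality.
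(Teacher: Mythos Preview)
The paper does not give a proof of this lemma; it is simply quoted as \cite[Theorem~2.1]{JandG02}. So there is no in-paper argument to compare your attempt against, and what follows evaluates your proposal on its own terms. Your proof of the inclusion $(\pi\times\pi)(S_{dis}(X))\subseteq S_{dis}(Y)$ is correct and is exactly the standard maximality argument.

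For the reverse inclusion your outline has two real gaps, and you only flag one. The one you flag --- lifting an edge of $\overline{{\bf P}(Y)}$, from a \emph{prescribed} preimage of one endpoint, into $S_{dis}(X)$ --- is genuine; your diagnosis that naive approximation fails because fibres of $\pi$ need not vary lower-semicontinuously is right, and ``use the enveloping semigroup'' is the correct instinct, but you do not actually carry it out. The second gap is hidden in the phrase ``transporting their orbit closures back along $\pi$''. Concretely: given $(y,y')\in S_{dis}(Y)$ whose orbit closure contains a minimal set $M\subseteq\overline{\E(\overline{{\bf P}(Y)})}$, and given a lift $M'\subseteq S_{dis}(X)$ of $M$, you need some lift $(x,x')$ of $(y,y')$ whose orbit closure in $X\times X$ meets $M'$ (so that the capturing property of $S_{dis}(X)$ applies). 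But an element $p\in E(X\times X)$ with $p(y,y')\in M$ need not hit $M'$ when applied to \emph{any} lift of $(y,y')$ --- elements of the enveloping semigroup are not surjective on $X\times X$ in general --- so this step does not go through as stated. The identity $\pi^{-1}(C(A))=C(\pi^{-1}(A))$ does not rescue it either, since $\pi^{-1}(\pi(S_{dis}(X)))$ is typically strictly larger than $S_{dis}(X)$ and there is no a~priori reason for this larger set to be capturing. The proof in \cite{JandG02} avoids all of this by working directly with the Ellis-group description of $S_{dis}$ rather than with the capturing formula of Theorem~\ref{JG}; if you want a self-contained argument, that algebraic route (or, equivalently, showing that $(\pi\times\pi)(S_{dis}(X))$ is already an equivalence relation and then invoking maximality of $Y_{dis}$) is the way to go.
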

An extension $\pi: (X,G)\ra (Y,G)$ is called {\it proximal} if $R_{\pi}\subset {\bf P}(X)$; $\pi$ is {\it distal}
if ${\bf P}(X)\cap R_{\pi}=\Delta(X)$; $\pi$ is {\it equicontinuous} if for any $\varepsilon>0$ there is $\delta>0$
such that for any $(x_1,x_2)\in R_{\pi}$ with $\rho(x_1,x_2)<\delta$, we have $\sup_{g\in G}\rho(gx_1,gx_2)<\varepsilon$.
An extension $\pi: (X,G)\ra (Y,G)$ between minimal systems is called almost $1-1$ if the set $\{y\in Y: |\pi^{-1}(y)|=1\}$ is a dense $G_{\delta}$ subset of $Y$, which is equivalent to the existence of a point in $Y$ whose fibre is a singleton.

\begin{lem}\label{finite index subgroup}
If $\pi: (X,G)\lra (Y,G)$ is a proximal (resp. equicontinuous, distal) extension and $H$ is a subgroup of $G$ of finite index, then
the induced extension $\pi_{H}: (X,H)\lra (Y,H)$ is also  proximal (resp. equicontinuous, distal).
\end{lem}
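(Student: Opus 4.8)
The plan is to reduce all three assertions to the single observation that if $H\leq G$ has finite index, then the proximal relations of $(X,G)$ and $(X,H)$ coincide, i.e.\ ${\bf P}(X,H)={\bf P}(X,G)$. Once this is in hand, the three cases fall out almost immediately. First I would fix coset representatives $g_1,\ldots,g_n\in G$ so that $G=\bigcup_{i=1}^{n}g_iH$, and record that each homeomorphism $g_i$, and each $g_i^{-1}$, is uniformly continuous, since $X$ is a compact metric space.

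To prove ${\bf P}(X,H)={\bf P}(X,G)$, the inclusion ${\bf P}(X,H)\subseteq{\bf P}(X,G)$ is trivial because $H\subseteq G$. For the reverse inclusion, take $(x,y)\in{\bf P}(X,G)$; since we work in a compact metric setting, there is a sequence $(\gamma_k)$ in $G$ with $\rho(\gamma_k x,\gamma_k y)\to 0$. Writing $\gamma_k=g_{i_k}h_k$ with $i_k\in\{1,\ldots,n\}$ and $h_k\in H$, a pigeonhole argument lets me pass to a subsequence along which $i_k$ is a constant $i$. Then $\rho(g_i h_k x, g_i h_k y)\to 0$, and applying the uniform continuity of $g_i^{-1}$ yields $\rho(h_k x,h_k y)\to 0$, so $(x,y)\in{\bf P}(X,H)$. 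This is the only place where the finite-index hypothesis is used.

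Finally I would read off the three cases. If $\pi$ is proximal over $G$, then $R_\pi\subseteq{\bf P}(X,G)={\bf P}(X,H)$, so $\pi_H$ is proximal. If $\pi$ is distal over $G$, then ${\bf P}(X,H)\cap R_\pi\subseteq{\bf P}(X,G)\cap R_\pi=\Delta(X)$, hence $=\Delta(X)$, so $\pi_H$ is distal. If $\pi$ is equicontinuous over $G$, then for a given $\varepsilon>0$ the very $\delta>0$ provided by $G$-equicontinuity works for $H$ as well, because $\sup_{h\in H}\rho(h x_1,h x_2)\leq\sup_{g\in G}\rho(g x_1,g x_2)$ for every $(x_1,x_2)\in R_\pi$; hence $\pi_H$ is equicontinuous. (Note that for the distal and equicontinuous cases one does not even need $[G:H]<\infty$; only the proximal case genuinely uses it.) There is no serious obstacle here: the one point requiring care is that proximality is witnessed by sequences and that the passage $\rho(g_i h_k x, g_i h_k y)\to 0\ \Rightarrow\ \rho(h_k x,h_k y)\to 0$ relies on compactness of $X$, while the finite-index hypothesis is exactly what makes the pigeonhole step legitimate.
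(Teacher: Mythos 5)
Your proposal is correct and follows essentially the same route as the paper: the coset decomposition $G=\bigcup_i g_iH$ together with uniform continuity of the finitely many $g_i$ (and $g_i^{-1}$) on the compact space $X$ is exactly the paper's mechanism, with your pigeonhole/sequence formulation being the contrapositive of the paper's infimum-based argument, and the distal and equicontinuous cases handled identically. Your added observation that ${\bf P}(X,H)={\bf P}(X,G)$ is also implicitly what the paper extracts later (in Lemma \ref{liftproximal}(1)).
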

\begin{proof}
Suppose $x_1,x_2\in \pi^{-1}(y)$ for some $y\in Y$.  Let $G=g_1H\cup\cdots\cup g_n H$ for some $g_1,\cdots,g_n\in G$.

If $\inf_{g\in G}\rho(gx,gy)>0$, then $\inf_{h\in H}\rho(hx,hy)>0$.
Thus if $\pi: (X,G)\lra (Y,G)$ is a distal extension then so is $\pi_{H}: (X,H)\lra (Y,H)$.

If  $\inf_{h\in H}\rho(hx,hy)>0$, then $\inf_{h\in H}\rho(g_ihx,g_ihy)>0$ for any $i\in\{1,\cdots,n\}$.
Thus $\inf_{g\in G}\rho(gx,gy)>0$. Therefore, if $\pi: (X,G)\lra (Y,G)$ is a proximal extension
then so is $\pi_{H}: (X,H)\lra (Y,H)$.

If $\pi: (X,G)\lra (Y,G)$ is an equicontinuous extension, then  for any $\varepsilon>0$, there is
some $\delta>0$ such that for any $(x_1,x_2)\in R_{\pi}$ with $\rho(x_1,x_2)\leq\delta$, we have
$\sup_{g\in G}\rho(gx_1,gx_2)<\varepsilon$. Then it follows trivially that $\sup_{g\in H}\rho(gx_1,gx_2)<\varepsilon$.
Thus $\pi: (X,H)\lra (Y,H)$ is also an equicontinuous extension.
\end{proof}

\begin{lem}\label{extension for product}
If $\pi_{i}:(X_i, G)\lra (Y_i, G)$ is a proximal (resp. equicontinuous, distal) extension for each $i\in\{1,\cdots,n\}$, then the extension  $\prod_{i=1}^n \pi_i: (\prod_{i=1}^nX_i, G)\lra (\prod_{i=1}^{n}Y_i, G)$ of the product system is also proximal (resp. equicontinuous, distal).
\end{lem}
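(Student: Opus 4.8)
The plan is to reduce to $n=2$ by an obvious induction and then, writing $\pi=\pi_1\times\pi_2\colon(X_1\times X_2,G)\to(Y_1\times Y_2,G)$, to work throughout with the canonical identification $(X_1\times X_2)^2\cong(X_1\times X_1)\times(X_2\times X_2)$ given by $((a,b),(a',b'))\mapsto((a,a'),(b,b'))$. Under this identification $R_\pi$ corresponds to $R_{\pi_1}\times R_{\pi_2}$, the diagonal $\Delta(X_1\times X_2)$ corresponds to $\Delta(X_1)\times\Delta(X_2)$, the $G$-action is the coordinatewise diagonal action, and all the relations in sight are closed and $G$-invariant. The $n=2$ argument will nowhere use minimality of the factors, so the induction is harmless (in the inductive step the first factor is $\prod_{i\le n}X_i$, which need not be minimal).

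For the equicontinuous case I would argue pointwise: given $\varepsilon>0$ pick $\delta_i>0$ witnessing equicontinuity of $\pi_i$ at scale $\varepsilon$ and set $\delta=\min\{\delta_1,\delta_2\}$; if two $R_\pi$-related points lie within $\delta$ in the product metric, their $i$-th coordinates are $R_{\pi_i}$-related and $\delta_i$-close, hence $\varepsilon$-close under all of $G$, so the two points remain (comparably) close under all of $G$. The distal case is equally pointwise: if $((x_1,x_2),(x_1',x_2'))$ is both $R_\pi$-related and proximal in $X_1\times X_2$, then a single sequence $(g_n)$ contracting it contracts each coordinate pair, so $(x_i,x_i')\in R_{\pi_i}\cap{\bf P}(X_i)$, which equals $\Delta(X_i)$ since $\pi_i$ is distal; thus $x_i=x_i'$ for $i=1,2$.

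The proximal case is the substantial one, since a pair of proximal pairs need not be \emph{simultaneously} proximal and so no pointwise argument can work; here my plan is to pass to orbit closures. Fix $w=((x_1,x_2),(x_1',x_2'))\in R_\pi$ and set $Z=\overline{\O}(w,G)$. Since $R_{\pi_1}\times R_{\pi_2}$ is closed and $G$-invariant, $Z$ lies in it. Choose a minimal subset $M\subseteq Z$. For $i=1,2$ the coordinate projection $R_{\pi_1}\times R_{\pi_2}\to R_{\pi_i}$ is a factor map, so its restriction carries $M$ onto a minimal subsystem $M_i$ of $R_{\pi_i}$; as $\pi_i$ is a proximal extension, $R_{\pi_i}\subseteq{\bf P}(X_i)$, so $M_i$ is a minimal subsystem of $X_i\times X_i$ consisting of proximal pairs. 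Such an $M_i$ must lie in $\Delta(X_i)$: picking any (proximal) point of $M_i$, its orbit closure is all of $M_i$ and meets $\Delta(X_i)$, and the orbit closure of a point of $M_i\cap\Delta(X_i)$ is simultaneously contained in $\Delta(X_i)$ and, by minimality, equal to $M_i$. Hence $M\subseteq M_1\times M_2\subseteq\Delta(X_1)\times\Delta(X_2)$, which under our identification is $\Delta(X_1\times X_2)$; so $Z$ meets the diagonal, i.e.\ $w\in{\bf P}(X_1\times X_2)$. As $w\in R_\pi$ was arbitrary, $R_\pi\subseteq{\bf P}(X_1\times X_2)$ and $\pi$ is proximal.

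The only place where something beyond bookkeeping happens is the claim that a minimal subsystem of $X_i\times X_i$ contained in ${\bf P}(X_i)$ already lies on the diagonal; this is where proximality of the \emph{extension} $\pi_i$ — rather than proximality of the single fibre pair at hand — is genuinely exploited, and I expect it to be the only potential sticking point. It rests on the two elementary facts that the orbit closure of a proximal pair meets the diagonal and that a minimal set is the orbit closure of each of its points. Alternatively, one could phrase the proximal case via a minimal idempotent $u$ of the enveloping semigroup, observing that $u$ collapses every fibre of a proximal extension and hence collapses every fibre of $\pi_1\times\pi_2$; but the orbit-closure argument above is self-contained.
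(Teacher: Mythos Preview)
Your argument is correct. The equicontinuous and distal cases match the paper's treatment (the paper simply says these are ``trivial by definitions''), and your reduction to $n=2$ is the same.

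For the proximal case, however, your route differs from the paper's. The paper argues directly with sequences: given $(x_1,x_2),(x_1',x_2')\in R_{\pi_1\times\pi_2}$, it first uses proximality of $\pi_1$ to find $(g_n)$ with $g_nx_1,g_nx_1'\to z_1$, passes to a subsequence so that $g_nx_2\to z_2$ and $g_nx_2'\to z_2'$, observes that $(z_2,z_2')\in R_{\pi_2}$ since $R_{\pi_2}$ is closed and invariant, then uses proximality of $\pi_2$ to find $(h_n)$ contracting $(z_2,z_2')$, and finally combines the two via a diagonal sequence $h_kg_{n_k}$. Your argument instead passes to a minimal subset of the orbit closure, projects to each factor, and uses the fact that a minimal subsystem contained in ${\bf P}(X_i)$ must lie on the diagonal. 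The paper's two-step contraction is slightly more elementary and constructive, avoiding any appeal to minimal sets; your approach is more structural and makes transparent \emph{why} the result holds (any minimal set in $R_\pi$ is forced onto the diagonal coordinatewise), and it generalizes cleanly without re-running a diagonal argument at each inductive step. Both are short and self-contained.
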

\begin{proof}
Suppose that each $\pi_i$ is a proximal extension. We  show that  $\prod_{i=1}^n \pi_i$ is also a proximal extension just for $n=2$. The general case follows by induction on $n$.

Let $(x_1,x_2),(x_1',x_2')\in R_{\pi_1\times\pi_2}$. Then there is a sequence $(g_n)$ in $G$ such that $g_nx_1\rightarrow z_1$ and $g_nx_1'\rightarrow z_1$ for some $z_1\in X_1$. By taking some subsequence, we may assume that $g_nx_2\rightarrow z_2$ and $g_nx_2'\rightarrow z_2'$ for some $z_2,z_2'\in X_2$. Then $(z_2,z_2')\in R_{\pi_2}$. So there is some sequence $(h_n)$ in $G$ such that $h_nz_2\rightarrow z_3$ and $h_nz_2'\rightarrow z_3$ for some $z_3\in X_2$. We may further assume that $h_nz_1\rightarrow z_0$ for some $z_0\in X_1$.  For each $k\geq 1$, there is some $n_k\geq 1$ such that for any $n\geq n_k$,
$$\rho(h_kg_{n}x_1, h_kz_1), ~~\rho(h_kg_{n}x_1', h_kz_1), ~~\rho(h_kg_{n}x_2, h_kz_2), ~~\rho(h_kg_{n}x_2', h_kz_2')<\frac{1}{k}.$$
Then $h_kg_{n_{k}}(x_1,x_2)\rightarrow (z_0, z_3)$ and  $h_kg_{n_{k}}(x_1',x_2')\rightarrow (z_0, z_3)$. Hence $((x_1,x_2),(x_1',x_2'))\in {\bf P}(X_1\times X_2)$ and it follows that $\pi_1\times\pi_2$ is also proximal.

\medskip
It is trivial by definitions for equicontinuous and distal extensions.
\end{proof}

\subsection{Regionally proximal relation of higher order and Nil-factors}
For a t.d.s. $(X,T)$, Host, Kra and Maass \cite{HKM} introduced the following definition.
If ${\bf n} = (n_1,\ldots, n_d)\in \Z^d$ and $\ep\in \{0,1\}^d$, we
define
$${\bf n}\cdot \ep = \sum_{i=1}^d n_i\ep_i .$$

\begin{de}
Let $(X, T)$ be a t.d.s. and let $d\in \N$. The points $x, y \in X$ are
said to be {\em regionally proximal of order $d$} if for any $\d  >
0$, there exist $x', y'\in X$ and a vector ${\bf n} = (n_1,\ldots ,
n_d)\in\Z^d$ such that $\rho(x, x') < \d, \rho (y, y') <\d$, and $$
\rho (T^{{\bf n}\cdot \ep}x', T^{{\bf n}\cdot \ep}y') < \d\
\text{for any $\ep\in \{0,1\}^d\setminus \{\bf 0\}$},$$
where ${\bf 0}=(0,0,\ldots,0)\in \{0,1\}^d$. The set of regionally proximal pairs of
order $d$ is denoted by $\RP^{[d]}$ (or by $\RP^{[d]}(X,T)$ in case of
ambiguity), and is called {\em the regionally proximal relation of
order $d$}.
\end{de}

Similarly we can define $\RP^{[d]}(X,G)$ for a system $(X,G)$ with $G$ being abelian.
We note that $\RP^{[1]}=\RP$.  It is easy to see that $\RP^{[d]}$ is a closed and invariant
relation. Observe that
\begin{equation*}
    {\bf P}(X)\subset  \ldots \subset \RP^{[d+1]}\subset
    \RP^{[d]}\subset \ldots \subset \RP^{[2]}\subset \RP^{[1]}=\RP.
\end{equation*}

Host, Kra and Maass \cite{HKM} showed that if a system is minimal and
distal then $\RP^{[d]}$ is an equivalence relation, and a very deep result stating that
$(X/\RP^{[d]},T)$ is the maximal $d$-step pro-nilfactor of the system.
Shao and Ye \cite{SY} showed that all these results in fact hold for arbitrarily minimal systems
of $\Z$-actions, and for abelian group actions, $\RP^{[d]}$ is an equivalence relation.
See Glasner, Gutman and Ye \cite{GGY18} for similar results regarding general group actions.

It follows that for any minimal system $(X,T)$, ${\bf RP}^{[\infty]}:=\bigcap_{d=1}^{\infty}{\bf RP}^{[d]}$ is a closed invariant equivalence relation. Let $X_{\infty}$ denote the quotient system of $(X,T)$ by ${\bf RP}^{[\infty]}$.  In \cite{DDMSY} , the notion of systems of order $\infty$ was introduced. A minimal system $(X,T)$ is an {\it $\infty$-step pro-nilsystem} or {\it a system of order $\infty$}, if ${\bf RP}^{[\infty]}=\Delta(X)$. The authors in \cite[Theorem 3.6]{DDMSY} further showed that a minimal system is an $\infty$-step pro-nilsystem if and only if it is an inverse limit of minimal nilsystems.  One can show that $X_{\infty} $ is the maximal factor of $(X,T)$ being order of $\infty$.

\subsection{Saturatedness and topological characteristic factors}
\begin{de}
Let $\phi: X\ra Y$ be a map between two sets. A subset $L$ of $X$ is $\phi$-{\it saturated} if $\phi^{-1}(\phi(L))=L$.
\end{de}
The following lemma is direct by the definition. For the completeness we afford a proof here.
\begin{lem}\label{lem-saturated-basic}
	Let $X,Y,Z$ be sets, and let $\pi: X\rightarrow Y, \phi: X\rightarrow Z, \psi: Z\rightarrow Y$ be
	surjective maps such that $\pi= \psi\circ \phi$
	$$\xymatrix@R=0.5cm{
		X \ar[dd]_{\pi} \ar[dr]^{\phi}             \\
		& Z \ar[dl]_{\psi}         \\
		Y                 }
	$$
	\begin{enumerate}
		\item If $A\subset X$ is $\pi$ saturated, then $A$ is $\phi$ saturated and $\phi(A)$ is $\psi$ saturated.
		\item If $A\subset X$ is $\phi$ saturated and $\phi(A)$ is $\psi$ saturated, then $A$ is $\pi$ saturated.
	\end{enumerate}
\end{lem}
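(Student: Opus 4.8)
The statement to prove is Lemma \ref{lem-saturated-basic}, a routine set-theoretic fact about composition of surjective maps $\pi = \psi \circ \phi$ and saturation. Let me sketch a proof.

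The plan is to unwind the definition of $\phi$-saturated directly, using only elementary set theory. Recall $A$ is $\phi$-saturated means $\phi^{-1}(\phi(A)) = A$; since $A \subseteq \phi^{-1}(\phi(A))$ always holds for any map, the content is the reverse inclusion $\phi^{-1}(\phi(A)) \subseteq A$, i.e.\ $\phi(x) \in \phi(A) \implies x \in A$.

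For part (1): assume $A$ is $\pi$-saturated, i.e.\ $\pi^{-1}(\pi(A)) = A$. To see $A$ is $\phi$-saturated, take $x$ with $\phi(x) \in \phi(A)$, so $\phi(x) = \phi(a)$ for some $a \in A$; applying $\psi$ gives $\pi(x) = \psi(\phi(x)) = \psi(\phi(a)) = \pi(a) \in \pi(A)$, hence $x \in \pi^{-1}(\pi(A)) = A$. To see $\phi(A)$ is $\psi$-saturated, take $z \in Z$ with $\psi(z) \in \psi(\phi(A)) = \pi(A)$; since $\phi$ is surjective, pick $x \in X$ with $\phi(x) = z$, then $\pi(x) = \psi(\phi(x)) = \psi(z) \in \pi(A)$, so $x \in \pi^{-1}(\pi(A)) = A$, and therefore $z = \phi(x) \in \phi(A)$; this shows $\psi^{-1}(\psi(\phi(A))) \subseteq \phi(A)$, and the other inclusion is automatic.

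For part (2): assume $A$ is $\phi$-saturated and $\phi(A)$ is $\psi$-saturated. Take $x$ with $\pi(x) \in \pi(A) = \psi(\phi(A))$. Then $\psi(\phi(x)) = \pi(x) \in \psi(\phi(A))$, so $\phi(x) \in \psi^{-1}(\psi(\phi(A))) = \phi(A)$ by $\psi$-saturation of $\phi(A)$; then $x \in \phi^{-1}(\phi(A)) = A$ by $\phi$-saturation of $A$. Hence $\pi^{-1}(\pi(A)) \subseteq A$, and equality follows. There is no real obstacle here — the only point requiring care is remembering to use surjectivity of $\phi$ in the second half of part (1), and to use surjectivity of $\psi$ (or rather the identity $\psi(\phi(A)) = \pi(A)$) consistently; everything else is a direct diagram chase. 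I would present it compactly in three short displayed implications, one per claim.
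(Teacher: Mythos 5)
Your proof is correct and is essentially the same elementary diagram chase as the paper's, just carried out pointwise rather than via the set-level identities $\pi^{-1}\circ\pi=\phi^{-1}\circ\psi^{-1}\circ\psi\circ\phi$; you correctly isolate the one place where surjectivity of $\phi$ is needed (the $\psi$-saturation of $\phi(A)$ in part (1)).
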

\begin{proof} (1) Since $\pi^{-1}\circ\pi(A)=A$, we get $\phi^{-1}\circ\psi^{-1}\circ\psi \circ\phi(A)=A$.
That is, $\psi^{-1}\circ\psi (\phi(A))=\phi(A).$
	
At the same time, $\phi^{-1}\circ\phi(A)\subset \phi^{-1}\circ\psi^{-1}\circ\psi(\phi(A))=\pi^{-1}\circ\pi(A)=A$. That is, $\phi^{-1}\circ\phi(A)=A$.
	
\medskip
(2) We know that $\psi^{-1}\circ\psi(\phi(A))=\phi(A)$ and $\phi^{-1}\circ\phi(A)=A.$  Thus, $$\pi^{-1}\circ\pi(A)=\phi^{-1}\circ\psi^{-1}\circ\psi(\phi(A))=\phi^{-1}\circ\phi(A)=A.$$
\end{proof}

Let $X, Y$ be compact metric spaces and $\phi: X\ra Y$ a map between them. For $d\in\N$ with $d\geq 2$, we use  $\phi^{(d)}$ to denote the map $$\phi\times\cdots\times \phi: X^d\ra Y^d, (x_1,\cdots,x_d)\mapsto (\phi(x_1),\cdots,\phi(x_d)).$$

\medskip
Let $(X,T)$ be a t.d.s. and $d\geq 2$. Define
$$\sigma_{d}(T)=T^{(d)}=T\times\cdots\times T (d-{\rm times}), ~~ \tau_{d}(T)=T\times T^2\times\cdots\times T^{d},$$
and $\G_{d}(T)=\langle \sigma_{d}(T), \tau_{d}(T)\rangle$ be the group generated by $\sigma_{d}(T)$ and $\tau_{d}(T)$.
Let
\[ N_{d}(X,T)=\overline{\O}(\Delta_{d}(X), \G_{d}(T))=
\overline{\left\{ (T^{p+q}x, T^{p+2q}x,\cdots, T^{p+dq}x):~x\in \Delta_{d}(X), p,q\in \Z\right\}}. \]
When there is no ambiguity we also use $N_d(T)$ or $N_d(X)$ to denote $N_{d}(X,T)$ for short.
 In \cite{G94}, Glasner showed that $(N_{d}(X,T), \G_{d}(T))$ is also a minimal system provided that $(X,T)$ is minimal.

\begin{de}
Let $\pi: (X, T)\ra (Y,T)$ be a factor map between minimal systems and $d\in\N$ with $d\geq 2$. If there exists a dense $G_{\delta}$ subset $\Omega$ of $X$ such that for any $x\in \Omega$, the set
$$L_{x}:=\overline{\O}(x^{(d)}, \tau_{d}(T))=\overline{\left\{ (T^{n}x, T^{2n}x,\cdots, T^{dn}x):~n\in \Z\right\}} $$
is $\pi^{(d)}$-saturated, i.e., $\pi^{(-d)}\left(\pi^{(d)}(L_x)\right)=L_x$, then we say that $(Y,T)$ is a {\it $d$-step topological characteristic factor} of $(X,T)$.
\end{de}

 The following theorems characterizing topological characteristic factors  were proved in \cite{GHSWY}.

\begin{thm}\cite[Theorem A]{GHSWY}\label{5p-mianthm} Let $(X,T)$ be a minimal system, and $\pi:X\rightarrow X_\infty$ be the factor map.
	Then there are minimal systems $X^*$ and $X_\infty^*$ which are almost one to one
	extensions of $X$ and $X_\infty$ respectively, and a commuting diagram below such that  $X_\infty^*$ is a
	$d$-step topological characteristic factor of $X^*$ for all $d\ge 2$,
	\[
	\begin{CD}
		X @<{\sigma^*}<< X^*\\
		@VV{\pi}V      @VV{\pi^*}V\\
		X_\infty @<{\tau^*}<< X_\infty^*
	\end{CD}
	\]
\end{thm}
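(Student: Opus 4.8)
\medskip

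\noindent\textbf{A proof strategy.} One inclusion is automatic: $(\pi^{*})^{(d)}$ intertwines $\tau_{d}(T)$, so $(\pi^{*})^{(d)}(L_{x})=L_{\pi^{*}(x)}$ (the latter computed inside $(X_{\infty}^{*})^{d}$), whence $L_{x}$ is $(\pi^{*})^{(d)}$-saturated if and only if $L_{x}=((\pi^{*})^{(d)})^{-1}(L_{\pi^{*}(x)})$, and $\subseteq$ always holds. So, granting the almost $1$--$1$ modifications, the task is to exhibit a dense $G_{\delta}$ set $\Omega\subseteq X^{*}$ on which the reverse inclusion holds. The plan is to prove, by induction up the structure tower of $(X,T)$ (Furstenberg \cite{Fur63}; in general Ellis--Glasner--Shapiro \cite{EGS75}, McMahon \cite{Mc78}, Veech \cite{V83}), taken in its metric, almost $1$--$1$ form, the following sharper statement: \emph{for the minimal system $W$ at each level of the tower there are almost $1$--$1$ extensions $W^{*}\to W$ and $W_{\infty}^{*}\to W_{\infty}$ fitting into a commuting square, with $W_{\infty}^{*}$ a $d$-step topological characteristic factor of $W^{*}$}; the top level gives the theorem. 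Two soft facts organize the bookkeeping: being a $d$-step topological characteristic factor is transitive along compositions of factor maps of minimal systems (such maps are semi-open, hence residual subsets of the base pull back to residual subsets), and $x\mapsto L_{x}$ is lower semicontinuous into the hyperspace, hence has a residual set of continuity points; limit ordinals are handled by combining these with an inverse-limit argument. Each successor step of the tower is a proximal, an isometric, or a weakly mixing RIC extension, and these three types are treated separately.

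\medskip

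\noindent A proximal step is easy: a proximal extension changes neither the maximal pro-nilfactor nor, after passing to an almost $1$--$1$ model, the system itself over a residual set of fibres, so the inductive statement transfers verbatim. For a weakly mixing RIC step $W'\to W$ the maximal pro-nilfactor is again unchanged, and the point is to show that for residual $x\in W'$ the orbit closure $L_{x}$ already equals the whole preimage $(\pi^{(d)})^{-1}(L_{\pi(x)})$ --- a large set for a weakly mixing extension, so this is a genuine mixing assertion, proved via a topological, relative version of weak mixing of all orders; the identification of the correct target is underpinned by the computation of the pro-nilfactors of $N_{d}$ in \cite{GHSWY, LQ} (the maximal $k$-step pro-nilfactor of $N_{d}(W)$ is $N_{d}(W_{k})$, hence $N_{d}(W_{\infty})$ is the maximal $\infty$-step pro-nilfactor of $N_{d}(W)$), which certifies that no obstruction to saturation survives above the pro-nilfactor. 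The almost $1$--$1$ modification is forced precisely here: the diagonal $\{x^{(d)}:x\in W'\}$ is meagre in $N_{d}(W')$, so a bare residual-intersection argument cannot produce a residual set of diagonal points that are $\tau_{d}(T)$-transitive in $N_{d}(W')$; one must instead exploit the extra genericity of an almost $1$--$1$ model --- the same phenomenon exhibited by the example of \cite{G94} and by Theorem D.

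\medskip

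\noindent The heart of the proof is the isometric step $W'\to W$ between minimal \emph{distal} systems; this is the only step at which the maximal pro-nilfactor can grow (from $W_{\infty}$ to a possibly proper isometric extension $W'_{\infty}$), and here no modification is needed. Since everything is distal, $((W')^{d},\tau_{d}(T))$ is a disjoint union of minimal sets and $L_{x}$ is simply the minimal set through $x^{(d)}$, so one must show it fills $(\pi^{(d)})^{-1}(L_{\pi(x)})$. This is carried out with the topological theory of dynamical cubes: using the cube and face groups together with the description $W/\RP^{[d]}=W_{d}$ of the maximal $d$-step pro-nilfactor (Host--Kra--Maass \cite{HKM}, Shao--Ye \cite{SY}) and the inductive hypothesis for $W$, one checks that adjoining one isometric layer forces the fibres of $\pi^{(d)}\colon L_{x}\to L_{\pi(x)}$ to be full, exactly as $\RP^{[d]}$ propagates through isometric extensions. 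I expect this isometric step --- establishing the cube computation uniformly over a residual set of base points, and in parallel keeping the accumulating almost $1$--$1$ modifications coherent and metrizable all the way up a possibly transfinite tower --- to be the main obstacle; the weakly mixing step and its Baire-category subtlety is the secondary difficulty, while the proximal step together with the limit and transitivity bookkeeping is routine. Finally, since the modifications produced above may a priori depend on $d$, one takes an inverse limit over $d\ge2$ to get a single pair $X^{*},X_{\infty}^{*}$ valid for all $d\ge2$ at once.
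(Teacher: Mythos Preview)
This theorem is not proved in the present paper: it is quoted verbatim as \cite[Theorem A]{GHSWY} and used as a black box (see also Theorem~\ref{lab55} and Theorem~\ref{main-distal}, likewise cited from \cite{GHSWY}). There is therefore no proof in this paper for your proposal to be compared against.

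For what it is worth, your outline is in the right spirit --- the argument in \cite{GHSWY} does proceed through the structure theorem and treats the various extension types separately --- but several of your emphases are off. The isometric step is not the ``heart'' in the way you describe; rather, the decisive input is the identification of $\RP^{[d]}$ and the fact (from \cite{HKM,SY}) that for a minimal distal system the maximal $d$-step pro-nilfactor is already a $(d{+}1)$-step topological characteristic factor (this is exactly what the present paper quotes as Theorem~\ref{main-distal}). The almost $1$--$1$ modification is not introduced at the weakly mixing step as you suggest, but arises from the standard O-diagram/shadow construction that converts the factor map $\pi$ into an open one; once $\pi^{*}$ is open and factors through $X_{\infty}$, the saturation follows from Theorem~\ref{lab55}. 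Your final inverse-limit-over-$d$ manoeuvre is unnecessary: a single almost $1$--$1$ modification making $\pi^{*}$ open works simultaneously for all $d$. If you want to see the actual proof, you should consult \cite{GHSWY} directly.
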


\begin{thm}\cite[Theorem 4.2]{GHSWY}\label{lab55}
	Let $\pi:(X,T)\rightarrow(Y,T)$ be an extension of minimal systems. If $\pi$ is open and $X_\infty$ is a factor of $Y$, then $Y$ is a $d$-step topological characteristic factor of $X$ for all $d\in \N$.
	$$\xymatrix{
		& X \ar[dl]_{\pi_\infty} \ar[d]^{\pi}             \\
		X_{\infty}  & Y \ar[l]_{\phi}}
	$$
\end{thm}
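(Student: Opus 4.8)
Since $X_\infty$ is a factor of $Y$, writing $\pi_\infty=\phi\circ\pi: X\to X_\infty$ one has $R_\pi\subseteq R_{\pi_\infty}=\RP^{[\infty]}(X)$, and this is the only way the hypothesis that $X_\infty$ is a factor of $Y$ will enter (openness of $\pi$ being used throughout). Fix $d\in\N$. The plan is to first absorb the openness of $\pi$, reformulating the $\pi^{(d)}$-saturation of $L_x$ as a statement about $\tau_d(T)$-orbits and about fibres of $\pi$ inside $X$ alone, and only afterwards to bring in $R_\pi\subseteq\RP^{[\infty]}(X)$.

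\emph{Step 1 (openness reduction).} As $\pi$ is open, so is $\pi^{(d)}=\pi\times\cdots\times\pi: X^d\to Y^d$, and hence the fibre map $\bar y\mapsto (\pi^{(d)})^{-1}(\bar y)$ is continuous into the hyperspace of $X^d$. I would first prove that for \emph{every} $x\in X$
\[
(\pi^{(d)})^{-1}\big(\pi^{(d)}(L_x)\big)=\overline{\O}\big((\pi^{-1}(\pi x))^{d},\ \tau_d(T)\big).
\]
The inclusion $\supseteq$ is immediate: for $u_1,\dots,u_d\in\pi^{-1}(\pi x)$ one has $\pi^{(d)}\big(\tau_d(T)^n(u_1,\dots,u_d)\big)=\tau_d(T)^n(\pi(x)^{(d)})=\pi^{(d)}\big(\tau_d(T)^n x^{(d)}\big)\in\pi^{(d)}(L_x)$, and the left-hand side of the display is closed. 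For $\subseteq$, if $\pi^{(d)}(z)=\pi^{(d)}(w)$ with $\tau_d(T)^{n_k}x^{(d)}\to w$, then $\tau_d(T)^{n_k}(\pi x)^{(d)}\to\pi^{(d)}(z)$, and lower semicontinuity of the fibre map yields $z^{(k)}\to z$ with $z^{(k)}\in(\pi^{(d)})^{-1}\big(\tau_d(T)^{n_k}(\pi x)^{(d)}\big)=\tau_d(T)^{n_k}\big((\pi^{-1}(\pi x))^d\big)$. Since $L_x$ is closed, $\tau_d(T)$-invariant and contains $x^{(d)}\in(\pi^{-1}(\pi x))^d$, the identity above gives
\[
L_x\ \text{is}\ \pi^{(d)}\text{-saturated}\iff (\pi^{-1}(\pi x))^d\subseteq \overline{\O}(x^{(d)},\tau_d(T))=L_x .
\]

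\emph{Step 2 (reduction to a single good point).} Put $S=\{x\in X:(\pi^{-1}(\pi x))^d\subseteq L_x\}$. Because $\sigma_d(T)=T\times\cdots\times T$ commutes with $\tau_d(T)$, one gets $L_{Tx}=\sigma_d(T)L_x$ and $(\pi^{-1}(\pi Tx))^d=\sigma_d(T)\big((\pi^{-1}(\pi x))^d\big)$, so $S$ is $T$-invariant; by minimality of $(X,T)$, either $S=\varnothing$ or $\overline S=X$. Next, $x\mapsto L_x$ is lower semicontinuous ($\{x:L_x\cap V\neq\varnothing\}$ is open for open $V$), so by a standard Baire-category argument (Fort's theorem) it is continuous on a dense $G_\delta$ set $\Omega_1\subseteq X$; since $\pi$ is open, $x\mapsto(\pi^{-1}(\pi x))^d$ is continuous everywhere, and the relation ``$A\subseteq B$'' passes to Hausdorff limits. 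Hence for $x\in\Omega_1\cap\overline S$ one may choose $x_k\to x$ with $x_k\in S$ and, for arbitrary $u_1,\dots,u_d\in\pi^{-1}(\pi x)$, points $u_j^{(k)}\in\pi^{-1}(\pi x_k)$ with $u_j^{(k)}\to u_j$ (openness), so that $(u_1^{(k)},\dots,u_d^{(k)})\in(\pi^{-1}(\pi x_k))^d\subseteq L_{x_k}$ and $L_{x_k}\to L_x$ force $(u_1,\dots,u_d)\in L_x$; thus $\Omega_1\cap\overline S\subseteq S$. Consequently, \emph{as soon as $S\neq\varnothing$} we obtain $\overline S=X$, hence $\Omega_1\subseteq S$, so $S$ is residual, and by Step 1 the dense $G_\delta$ set $\Omega:=\Omega_1$ witnesses that $Y$ is a $d$-step topological characteristic factor of $X$.

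\emph{Step 3 (the crux: one point in $S$).} It remains to produce a single $x_0\in X$ with $(\pi^{-1}(\pi x_0))^d\subseteq\overline{\O}(x_0^{(d)},\tau_d(T))$, equivalently such that every tuple $(u_1,\dots,u_d)$ with all $u_j$ in the $\pi$-fibre of $x_0$ can be realised as a simultaneous limit $T^{jn_k}x_0\to u_j$ ($j=1,\dots,d$) along a common sequence $(n_k)$. This is where $R_\pi\subseteq\RP^{[\infty]}(X)$ is indispensable, since it forces $(u_j,x_0)\in\RP^{[\infty]}(X)=\bigcap_m\RP^{[m]}(X)$ for every $j$, and turning ``regionally proximal of all orders'' into simultaneous controllability of the $d$ coordinates $T^{jn}x_0$ is, I expect, the main obstacle. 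The plan would be to exploit the structural description of $\RP^{[\infty]}(X)$ as the fibre relation over the maximal $\infty$-step pro-nilfactor, an inverse limit of minimal nilsystems \cite{SY,HKM,DDMSY}, where $\tau_d(T)$-orbit closures of diagonal points are governed by explicit (Leibman-type) polynomial orbit closures; regional proximality of high order is tailored precisely so that a van der Waerden/IP extraction isolating the progression $n,2n,\dots,dn$ among high-order cube configurations, followed by a diagonalisation over the approximation scales, lets one steer the $d$ coordinates to $u_1,\dots,u_d$ at once for a suitably chosen $x_0$. Making this extraction rigorous and uniform over the $d$ coordinates is the technical heart of the matter, and is the portion carried out with the full apparatus of \cite{GHSWY}.
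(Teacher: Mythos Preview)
The paper does not prove this theorem; it merely quotes it from \cite[Theorem~4.2]{GHSWY}. So there is no ``paper's own proof'' to compare against, and your proposal should be read as an attempt to sketch the argument of \cite{GHSWY}, not of the present paper.

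Your Steps~1 and~2 are correct and worth keeping. The identity
\[
(\pi^{(d)})^{-1}\big(\pi^{(d)}(L_x)\big)=\overline{\O}\big((\pi^{-1}(\pi x))^{d},\ \tau_d(T)\big)
\]
is a clean way to encode openness, and the reduction ``$L_x$ is $\pi^{(d)}$-saturated iff $(\pi^{-1}(\pi x))^d\subset L_x$'' is exactly right. The invariance/minimality/Fort argument in Step~2 showing that a single point in $S$ forces $\Omega_1\subset S$ is sound; it is essentially the hyperspace semicontinuity argument that the present paper itself uses elsewhere (Lemma~\ref{thm:genericity}).

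The gap is Step~3. You correctly isolate the crux---producing one $x_0$ with $(\pi^{-1}(\pi x_0))^d\subset L_{x_0}$---but you do not prove it; you only gesture at $\RP^{[\infty]}$ and IP/van der Waerden extraction and then write that this ``is the portion carried out with the full apparatus of \cite{GHSWY}.'' That is an honest admission, but it means your proposal is an outline, not a proof: everything specific to the hypothesis ``$X_\infty$ is a factor of $Y$'' is deferred. Moreover, the route you sketch (directly extracting a common sequence $(n_k)$ from high-order regional proximality) is not how \cite{GHSWY} proceeds; their Theorem~4.2 is obtained via the saturation theorem for the canonical PI-tower (their Theorem~A / Theorem~\ref{5p-mianthm} here), climbing through RIC, proximal and isometric extensions, rather than by producing a single distinguished $x_0$. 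So your reduction is legitimate and perhaps more conceptual, but the substantive step still has to be imported from \cite{GHSWY} in either framework.
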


\begin{lem}\cite[Lemma 3.3]{GHSWY}\label{lem-saturated}
Let $\pi: (X,T)\rightarrow (Y,T)$ be an open extension of minimal systems and $d\in \N$. If $Y$ is
a $d$-step topological characteristic factor of $X$, then $N_{d+1}(X)$ is $\pi^{(d+1)}$-saturated, i.e.
$(\pi^{(d+1)})^{-1}(N_{d+1}(Y))=N_{d+1}(X).$
\end{lem}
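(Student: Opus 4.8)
The plan is to prove the inclusion $(\pi^{(d+1)})^{-1}(N_{d+1}(Y))\subseteq N_{d+1}(X)$; the reverse inclusion is free, since $\pi^{(d+1)}$ commutes with $\sigma_{d+1}(T)$ and $\tau_{d+1}(T)$ and carries $\Delta_{d+1}(X)$ onto $\Delta_{d+1}(Y)$, so $\pi^{(d+1)}(N_{d+1}(X))=N_{d+1}(Y)$. First I would record the slicewise description of $N_{d+1}$: from $\sigma_{d+1}(T)^{p}\tau_{d+1}(T)^{q}(x^{(d+1)})=(T^{p+q}x,\dots,T^{p+(d+1)q}x)$, the substitution $w=T^{p+q}x$ gives $N_{d+1}(X)=\overline{\{(w,T^{q}w,\dots,T^{dq}w):w\in X,\ q\in\Z\}}=\overline{\bigcup_{w\in X}\{w\}\times L_{w}}$, where $L_{w}=\overline{\O}(w^{(d)},\tau_{d}(T))$, and likewise $N_{d+1}(Y)=\overline{\bigcup_{v\in Y}\{v\}\times L_{v}(Y)}$. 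Since $z\mapsto\tau_{d}(T)^{n}(z^{(d)})$ is continuous for each $n$, the map $v\mapsto L_{v}(Y)$ is lower semicontinuous, hence continuous on a dense $G_{\d}$ set $\mathcal C_{Y}\subseteq Y$; and since $\pi$ is open, $\Omega'=\Omega\cap\pi^{-1}(\mathcal C_{Y})$ is a dense $G_{\d}$ subset of $X$ (here $\Omega$ is the set from the characteristic-factor hypothesis on which $L_{x}$ is $\pi^{(d)}$-saturated).

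Next, the lifting step. Let $w_{0}\in X$ and $\mathbf a\in X^{d}$ with $(\pi w_{0},\pi^{(d)}(\mathbf a))\in N_{d+1}(Y)$; I must produce a net in $N_{d+1}(X)$ tending to $(w_{0},\mathbf a)$. Pick $w_{\a}\in\Omega'$ with $w_{\a}\to w_{0}$. Since $(\pi w_{0},\pi^{(d)}(\mathbf a))\in N_{d+1}(Y)$, the point $\pi^{(d)}(\mathbf a)$ lies in the fibre of the first–coordinate factor map $N_{d+1}(Y)\to Y$ over $\pi w_{0}$. \emph{Granting} that this fibre can be approximated along the prescribed net $\pi w_{\a}\to\pi w_{0}$, i.e.\ that there are $\eta_{\a}\in L_{\pi w_{\a}}(Y)$ with $\eta_{\a}\to\pi^{(d)}(\mathbf a)$ — which is legitimate once we know that map is open, because then $L_{\pi w_{\a}}(Y)$ is the whole fibre at the point $\pi w_{\a}\in\mathcal C_{Y}$ — one finishes as follows. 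We have $L_{\pi w_{\a}}(Y)=\pi^{(d)}(L_{w_{\a}})$ by equivariance, and $L_{w_{\a}}=(\pi^{(d)})^{-1}(L_{\pi w_{\a}}(Y))$ because $w_{\a}\in\Omega$; since $\pi^{(d)}$ is open (a product of open maps), choose $\mathbf a_{\a}\in(\pi^{(d)})^{-1}(\eta_{\a})\subseteq L_{w_{\a}}$ with $\mathbf a_{\a}\to\mathbf a$. Then $(w_{\a},\mathbf a_{\a})\in\{w_{\a}\}\times L_{w_{\a}}\subseteq N_{d+1}(X)$ and $(w_{\a},\mathbf a_{\a})\to(w_{0},\mathbf a)$, so $(w_{0},\mathbf a)\in N_{d+1}(X)$.

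The main obstacle is exactly the granted statement: that the first–coordinate factor map $N_{d+1}(Y)\to Y$ is open, equivalently that its fibres vary lower semicontinuously. Lower semicontinuity of $v\mapsto L_{v}(Y)$ delivers this at every point of $\mathcal C_{Y}$, so the set of ``open points'' of $N_{d+1}(Y)\to Y$ is a dense $G_{\d}$, $T$–invariant set; but the set of open points being residual and invariant does not in general force it to be all of $Y$ (a Denjoy semiconjugacy onto an irrational rotation is open precisely on a residual, non-closed set), so this step must genuinely use the hypotheses — $\pi$ open and $Y$ a $d$-step characteristic factor. I would attempt to spread the openness from $\mathcal C_{Y}$ to all of $Y$ by transporting a good fibre with an element of the enveloping semigroup, or else reduce, via the structure theory of characteristic factors, to the case where $Y$ is an inverse limit of nilsystems, for which first–coordinate projections of the systems $N_{k}$ are manifestly open. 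Once the openness of $N_{d+1}(Y)\to Y$ is in hand the rest of the argument is routine, and I expect essentially all of the difficulty to be concentrated in that point.
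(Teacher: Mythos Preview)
The paper does not prove this lemma; it is quoted from \cite[Lemma 3.3]{GHSWY} without argument. So there is no in-paper proof to compare against, and I evaluate your proposal on its own merits.

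Your slicewise description $N_{d+1}(X)=\overline{\bigcup_{w}\{w\}\times L_{w}}$ and the lifting strategy via the saturation of $L_{x}$ for $x\in\Omega$ are the right framework, and you correctly locate the only non-obvious step: given $(w_{0},\mathbf a)$ with $(\pi w_{0},\pi^{(d)}(\mathbf a))\in N_{d+1}(Y)$, you need $\eta_{\alpha}\in L_{\pi w_{\alpha}}(Y)$ converging to $\pi^{(d)}(\mathbf a)$, while a priori $\pi^{(d)}(\mathbf a)$ lies only in the $N_{d+1}(Y)$-fibre over $\pi w_{0}$, not in $L_{\pi w_{0}}(Y)$.

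However, the obstacle you flag --- openness of the first-coordinate projection $N_{d+1}(Y)\to Y$ --- is a detour, and your proposed workarounds (enveloping-semigroup transport, reduction to pro-nilsystems) are not needed. The missing observation is elementary: since $\pi$ is open, so is $\pi^{(d+1)}$, and for any open continuous surjection $f$ one has $f^{-1}(\overline{S})=\overline{f^{-1}(S)}$. Hence it suffices to check $(\pi^{(d+1)})^{-1}(A)\subseteq N_{d+1}(X)$ for a single \emph{dense} subset $A\subseteq N_{d+1}(Y)$, and you may take $A=\bigcup_{v\in Y}\{v\}\times L_{v}(Y)$, which is dense by your own slicewise description. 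For a point $(w_{0},\mathbf a)$ mapping into $A$ you now have $\pi^{(d)}(\mathbf a)\in L_{\pi w_{0}}(Y)$ by definition, and then plain lower semicontinuity of $v\mapsto L_{v}(Y)$ --- which you already recorded, and which holds at \emph{every} point of $Y$, not just on your $\mathcal C_{Y}$ --- gives $\eta_{\alpha}\in L_{\pi w_{\alpha}}(Y)$ with $\eta_{\alpha}\to\pi^{(d)}(\mathbf a)$ along any net $w_{\alpha}\to w_{0}$ with $w_{\alpha}\in\Omega$. The rest of your lifting argument then goes through verbatim; the continuity set $\mathcal C_{Y}$ and the restriction to $\Omega'=\Omega\cap\pi^{-1}(\mathcal C_{Y})$ play no role.
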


We remark that the openness assumptions above are necessary.

\section{On the minimality and total minimality}

In this section we will give a  proof of Theorem A.  First we give the following lemma.
\begin{lem}\label{minimality of Tn}
Let $(X,T)$ be a minimal system with $x\in X$ and $n,d\in\N$.  For any $i,j\in\{0,1,\cdots,n-1\}$, let
$$ [N_{d}(T)]_{j}^{i}=(\id\times T\times\cdots\times T^{d-1})^{i}(T\times T\times\cdots\times T)^{j} N_d(T^n).$$
Then
\begin{enumerate}
\item $N_d(T)=\bigcup_{i=0}^{n-1}\bigcup_{j=0}^{n-1}[N_{d}(T)]_{j}^{i}$;
\item each member of $\{[N_{d}(T)]_{j}^{i}:i,j\in\{0,1,\cdots,n-1\}\}$ is minimal under the action of $\G_d(T^n)$ and any two members are  either identical or disjoint;
\item if $(X, T^n)$ is minimal, then $N_d(T)=\bigcup_{i=0}^{n-1}[N_{d}(T)]^i$ and each $[N_d(T)]^i$ is minimal under the action of $\G_{d}(T^n)$, where $[N_{d}(T)]^i=\bigcup_{j=0}^{n-1}[N_{d}(T)]^i_j$;
\item if for any $l\in \Z$ there exist $y\in X$ and a sequence $(q_i)$ of integers such that
\begin{equation}\label{newview}
T^{nq_i}x\ra T^l y, T^{2nq_i}x\ra T^{2l}y, \cdots, T^{(d+1)nq_i}x\ra T^{(d+1)l}y,
\end{equation}
then $N_{d+1}(T)=N_{d+1}(T^n)$.
\end{enumerate}
\end{lem}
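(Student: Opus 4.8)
The plan is to exploit that $\G_d(T^n)$ is a finite–index subgroup of $\G_d(T)$ and to organize the coset structure through the operator $\rho_d(T):=\id\times T\times\cdots\times T^{d-1}$, for which $\tau_d(T)=\sigma_d(T)\rho_d(T)$; note that $\sigma_d(T),\tau_d(T),\rho_d(T)$ pairwise commute (each acts as a power of $T$ coordinatewise), that $\sigma_d(T)^n=\sigma_d(T^n)$, $\rho_d(T)^n=\rho_d(T^n)$, $\tau_d(T)^n=\tau_d(T^n)$ all lie in $\G_d(T^n)$, and that $[N_d(T)]_j^i=\rho_d(T)^i\sigma_d(T)^jN_d(T^n)$. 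Here $N_d(T^n)$ is read as the $\G_d(T^n)$-orbit closure of $x^{(d)}$; since $(X,T)$ is minimal, $\overline{\O}(x,T^n)$ is a $T^n$-minimal set, so $x^{(d)}$ is minimal for $\sigma_d(T^n)$ and hence, by the fact quoted in the introduction (applied to the system $(X,T^n)$), minimal for $\G_d(T^n)$, i.e.\ $N_d(T^n)$ is a $\G_d(T^n)$-minimal system. With this in hand: for (1) I would write an arbitrary orbit point as $\sigma_d(T)^p\tau_d(T)^qx^{(d)}=\sigma_d(T)^{p+q}\rho_d(T)^qx^{(d)}$, reduce $p+q$ and $q$ modulo $n$, and commute the residual $\G_d(T^n)$-part to the right, landing the point in some $[N_d(T)]_j^i$, which is closed as a continuous image of a compact set; the reverse inclusion is immediate from $N_d(T^n)\subset N_d(T)$ and the $\G_d(T)$-invariance of $N_d(T)$. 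For (2), each $[N_d(T)]_j^i$ is the image of the $\G_d(T^n)$-minimal set $N_d(T^n)$ under the homeomorphism $\rho_d(T)^i\sigma_d(T)^j$, which commutes with $\G_d(T^n)$, hence is itself $\G_d(T^n)$-minimal, and two $\G_d(T^n)$-minimal sets are either equal or disjoint. For (3), when $(X,T^n)$ is minimal the map $\sigma_d(T)$ fixes $\Delta_d(X)$ and commutes with $\G_d(T^n)$, so it preserves $N_d(T^n)$; then $[N_d(T)]_j^i=\rho_d(T)^iN_d(T^n)$ no longer depends on $j$ and the decomposition in (1) collapses to the stated one.

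For (4) it suffices to show $N_{d+1}(T)\subset N_{d+1}(T^n)$, reading now $N_{d+1}(T^n)=\overline{\O}(\Delta_{d+1}(X),\G_{d+1}(T^n))$; the reverse inclusion is free from $\G_{d+1}(T^n)\le\G_{d+1}(T)$. First I would record, exactly as in (3), that $\sigma_{d+1}(T)$ preserves $N_{d+1}(T^n)$ (it commutes with $\G_{d+1}(T^n)$ and maps $\Delta_{d+1}(X)$ onto itself). Next, fix $q\in\Z$ and apply hypothesis~(\ref{newview}) with $l=q$: the points $\tau_{d+1}(T^n)^{q_i}x^{(d+1)}$ lie in $N_{d+1}(T^n)$ and converge to $\tau_{d+1}(T)^qy^{(d+1)}$, so $\tau_{d+1}(T)^qy^{(d+1)}\in N_{d+1}(T^n)$ for that particular $y$. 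To remove the dependence on $y$, I would consider $E_q:=\{z\in X:\tau_{d+1}(T)^qz^{(d+1)}\in N_{d+1}(T^n)\}$, which is closed, nonempty, and invariant under $T^{\pm1}$ since $\tau_{d+1}(T)^q(T^{\pm1}z)^{(d+1)}=\sigma_{d+1}(T)^{\pm1}\bigl(\tau_{d+1}(T)^qz^{(d+1)}\bigr)$ and $\sigma_{d+1}(T)^{\pm1}$ preserves $N_{d+1}(T^n)$; minimality of $(X,T)$ then forces $E_q=X$. Consequently $\tau_{d+1}(T)^qz^{(d+1)}\in N_{d+1}(T^n)$ for every $q\in\Z$ and every $z\in X$, and applying a power of $\sigma_{d+1}(T)$ shows that every $\G_{d+1}(T)$-orbit point $\sigma_{d+1}(T)^p\tau_{d+1}(T)^qz^{(d+1)}$ lies in $N_{d+1}(T^n)$; taking the closure gives $N_{d+1}(T)\subset N_{d+1}(T^n)$.

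I expect the crux to be the passage from ``for some $y$'' to ``for all $z$'' in (4): (\ref{newview}) only deposits $\tau_{d+1}(T)^qy^{(d+1)}$ in $N_{d+1}(T^n)$ over an uncontrolled $y$, and promoting this to $x^{(d+1)}$ is precisely what the closed, $T$-invariant set $E_q$ does --- which is why the preliminary observation that $\sigma_{d+1}(T)$ preserves $N_{d+1}(T^n)$ has to be in place (it supplies the $T$-invariance of $E_q$) rather than being a formality. A minor point to keep straight is that ``$N_d(T^n)$'' must be interpreted as the orbit closure of the relevant diagonal data: the $\G_d(T^n)$-orbit closure of $x^{(d)}$ in (1)--(3) (which coincides with $N_d(X,T^n)$ exactly when $(X,T^n)$ is minimal, the setting of (3)), and $\overline{\O}(\Delta_{d+1}(X),\G_{d+1}(T^n))$ in (4); this is bookkeeping, not a genuine difficulty.
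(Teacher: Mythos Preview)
Your arguments for (1)--(3) are essentially those of the paper: the coset decomposition of $\G_d(T)$ over $\G_d(T^n)$ gives (1), commutativity gives (2), and for (3) both you and the paper use that under $T^n$-minimality the diagonal action $\sigma_d(T)$ already preserves $N_d(T^n)$, collapsing the $j$-index.

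For (4) your route differs from the paper's. The paper first extracts from the hypothesis (taking $l=1$) that $(X,T^n)$ is itself minimal, then uses this twice: once to approximate $T^ky$ by $T^{np_i}x$, and once to transfer the conclusion $\sigma^k\tau^l y^{(d+1)}\in N_{d+1}(T^n)$ from $y$ back to $x$. You bypass the $T^n$-minimality of $X$ entirely: since $\sigma_{d+1}(T)$ preserves $\Delta_{d+1}(X)$ and commutes with $\G_{d+1}(T^n)$, it preserves $N_{d+1}(T^n)=\overline{\O}(\Delta_{d+1}(X),\G_{d+1}(T^n))$ directly, and then your closed $T$-invariant set $E_q$ together with minimality of $(X,T)$ does the promotion from $y$ to all of $X$. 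This is correct and arguably cleaner. What the paper's detour buys is the side result that the hypothesis of (4) forces $(X,T^n)$ to be minimal, a fact used implicitly later (e.g.\ in the remark following Theorem~\ref{equivalence}); your argument does not deliver this, but it is not part of the stated conclusion of (4).
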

\begin{proof}
It is clear that
\begin{equation*}
\G_d(T)=\bigcup_{i=0}^{n-1}\bigcup_{j=0}^{n-1}
(\id\times T\times\cdots\times T^{d-1})^{i}(T\times T\times\cdots\times T)^{j}\G_{d}(T^{n}).
\end{equation*}
Thus (1) holds and the commutativity of $\G_{d}(T)$ implies that (2) holds. If $(X,T^n)$ is minimal, then
\begin{eqnarray*}
N_d(T)&=&\overline{\G_d(T)x}=\overline{\langle \tau_d(T)\rangle(\overline{\langle \sigma_{d}(T)\rangle x^{(d)}})}
=\overline{\langle \tau_d(T)\rangle(\overline{\langle \sigma_{d}(T^n)\rangle x^{(d)}})} \\
&=& \bigcup_{i=0}^{n-1}(\id\times T\times\cdots\times T^{d-1})^{i} \overline{\langle \tau_d(T^n)\rangle(\overline{\langle \sigma_{d}(T^n)\rangle x^{(d)}})} \\
&=& \bigcup_{i=0}^{n-1}(\id\times T\times\cdots\times T^{d-1})^{i}N_{d}(T^n)=\bigcup_{i=0}^{n-1}[N_{d}(T)]^i.
\end{eqnarray*}
Thus (3) holds.

Finally, we show (4).
First we show that $(X,T^n)$ is minimal. To see this,  let $Y_j$ to be the orbit closure of $T^jx$ under $T^n$ for $0\le j\le n-1$.
For $l=1$, we have $y$ and $(q_i)$ such that (\ref{newview}) holds. We assume that $y\in Y_{j_0}$. Thus, we have
$Y_0=Y_{j_0+1}=Y_{j_0+2}.$ This clearly implies that $Y_0=Y_j$ for all $1\le j\le n-1$ and
hence $(X,T^n)$ is minimal.

Now we show $N_{d+1}(T)=N_{d+1}(T^n)$. For any $l\in \Z$, we have $y$ and $(q_i)$ such that (\ref{newview}) holds.
For any $k\in\Z$ there is a sequence $(p_i)$ of integers such that
\begin{equation}\label{total-minimal}
T^{np_i}x\lra T^ky,
\end{equation} since $(X,T^n)$ is minimal. Thus, combining (\ref{newview}) and (\ref{total-minimal}) we get that
there are subsequences $(p_i')$ and
$(q_i')$ of $(p_i)$ and $(q_i)$ such that
$$T^{p_i'+nq_i'}x\ra T^{k+l} y, T^{p_i'+2nq_i'}x\ra T^{k+2l}y, \cdots, T^{p_i'+(d+1)nq_i'}x\ra T^{k+(d+1)l}y.$$

This implies that $\sigma^k\tau^l y^{(d+1)}\subset N_{d+1}(T^n)$ and in turn implies
$\sigma^k\tau^l x^{(d+1)}\subset N_{d+1}(T^n)$ by the fact that $(X,T^n)$ is minimal.
Since $l$ and $k$ are arbitrary, we get that $N_{d+1}(T)\subset N_{d+1}(T^n)$.
\end{proof}

Next we give two equivalent characterizations for the equality between $N_{d}(T)$ and $N_{d}(T^n)$. The equivalence between (1) and (3) in the following theorem was given in \cite[Theorem 6.2]{GHSWY}.  However, the equivalence with (2) is new, which is the main ingredient for our proof of Theorem A.
\begin{thm}\label{equivalence} Let $(X, T)$ be a minimal system, and $d,n\in\N$. Then the following statements are equivalent.
\begin{enumerate}
\item $N_{d+1}(T)=N_{d+1}(T^n)$.

\item There is a dense $G_\delta$ subset $X_0$ of $X$ such that for any  $x\in X_0$,  there exists $y\in X$ such that for any $l\in\Z$ there is a sequence $(q_i)$ of integers satisfying
$$T^{nq_i}x\lra T^ly, T^{2nq_i}x\lra T^{2l}y, \ldots, T^{(d+1)nq_i}x\lra T^{(d+1)l}y.$$

\item There is a dense $G_\delta$ subset $X_0$ of $X$ such that for any $l\in \Z$ and $x\in X_0$, there is a sequence $(q_i)$ of integers
such that
$$T^{nq_i}x\lra T^lx, T^{2nq_i}x\lra T^{2l}x, \ldots, T^{dnq_i}x\lra T^{dl}x.$$
\end{enumerate}
\end{thm}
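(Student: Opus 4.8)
The statement is an equivalence of three conditions about $N_{d+1}(T) = N_{d+1}(T^n)$; I would prove it as a cycle of implications, say $(1)\Rightarrow(2)\Rightarrow(3)\Rightarrow(1)$, exploiting minimality of $N_{d+1}(X)$ under $\G_{d+1}(T)$ together with the Baire category theorem to produce the dense $G_\delta$ sets.

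\medskip
\noindent\textbf{$(1)\Rightarrow(2)$.} Assume $N_{d+1}(T)=N_{d+1}(T^n)$. The natural idea is: for a ``generic'' $x$, the point $x^{(d+1)}$ should be a transitive (hence minimal) point of $N_{d+1}(T)$; since $N_{d+1}(T)=N_{d+1}(T^n)$, applying $\sigma_{d+1}(T)^l = T^l\times\cdots\times T^l$ to $\tau_{d+1}(T^n)$-translates of a suitable target point $y^{(d+1)}$ must be approximable. More concretely, I would first show that $N_{d+1}(T^n)$ is $\G_{d+1}(T^n)$-minimal (this is the minimality of $N_{d+1}$ for the $T^n$-system, true whenever $(X,T^n)$ is minimal; and (1) forces $(X,T^n)$ to be minimal, by looking at the first coordinate or by part (4)-type reasoning in Lemma \ref{minimality of Tn}). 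Then the equality of the two sets means $\tau_{d+1}(T^n)$-orbit closures inside $N_{d+1}(T)$ are as large as the full $\G_{d+1}(T)$-orbit closure. For fixed $l$, the condition ``$\exists (q_i)$ with $T^{jnq_i}x \to T^{jl}y$ for $1\le j\le d+1$'' says exactly that $(T^ly, T^{2l}y, \ldots, T^{(d+1)l}y) = \sigma_{d+1}(T)^{?}\tau_{d+1}(T)^{l}\cdots$ lies in $\overline{\O}(x^{(d+1)}, \langle\tau_{d+1}(T^n)\rangle)$. I would package ``for all $l\in\Z$ simultaneously with a single $y$'' by choosing $y$ so that $y^{(d+1)}$ is in an appropriate minimal subset and using that $\langle\sigma_{d+1}(T)\rangle$ and $\langle\tau_{d+1}(T^n)\rangle$ together generate enough of $\G_{d+1}(T)$; the dense $G_\delta$ set $X_0$ comes from taking $x$ such that $x^{(d+1)}$ is a transitive point for the relevant action, which is a dense $G_\delta$ condition on $x$ (pull back a dense $G_\delta$ of transitive points in $N_{d+1}(X)$ under $x\mapsto x^{(d+1)}$, using that this map is continuous and the diagonal meets the set of transitive points).

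\medskip
\noindent\textbf{$(2)\Rightarrow(3)$.} This should be the easy implication: given $x\in X_0$ and the point $y$ from (2), I would use minimality of $(X,T^n)$ (which follows from (2) exactly as in the proof of Lemma \ref{minimality of Tn}(4), taking $l=1$: the relation $T^{nq_i}x\to T^1 y$ etc. forces the $T^n$-orbit closures $Y_j$ of $T^jx$ to collapse) to find $(p_i)$ with $T^{np_i}y^{(d+1)}\to x^{(d+1)}$ (or rather $T^{np_i}$ pushing $y$ back near $x$), and then concatenate with the sequences from (2) exactly as in the last paragraph of the proof of Lemma \ref{minimality of Tn}(4) — pass to a diagonal subsequence of $(p_i+nq_i)$-type indices — to get, for each $l$, a sequence $(q_i')$ with $T^{nq_i'}x\to T^l x, \ldots, T^{dnq_i'}x\to T^{dl}x$. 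Note (3) only asks for $d$ coordinates, not $d+1$, which makes it strictly weaker and hence this direction purely formal once minimality of $(X,T^n)$ is in hand. The same dense $G_\delta$ set $X_0$ works.

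\medskip
\noindent\textbf{$(3)\Rightarrow(1)$.} Here I would invoke Lemma \ref{minimality of Tn}(4): it suffices to verify, for each $l\in\Z$, the existence of $y\in X$ and $(q_i)$ with $T^{jnq_i}x\to T^{jl}y$ for $1\le j\le d+1$, for (at least one, hence by minimality every) $x$. Condition (3) gives this with $y=x$ for $1\le j\le d$ only; the missing top coordinate $T^{(d+1)nq_i}x \to T^{(d+1)l}x$ is the gap. I would close it by a compactness/diagonal argument: pass to a subsequence so that $T^{(d+1)nq_i}x$ converges to some $z$, replace $x$ by a point $x'$ in a dense $G_\delta$ where we can additionally arrange, via (3) applied at a further scale or via minimality of the $(d+1)$-fold system, that $z=T^{(d+1)l}x$; alternatively iterate (3) to bootstrap from $d$ coordinates to $d+1$ coordinates by composing two instances of the sequences at parameters $l$ and a shift. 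This bootstrapping from $d$-fold to $(d+1)$-fold control is, I expect, \emph{the main obstacle} — it is the only place where the specific arithmetic of $\tau_{d+1}$ versus $\tau_d$ enters, and it is presumably why the authors single out (2), with its ``single $y$ good for all $l$'' feature, as the genuinely new and useful reformulation: the uniformity in $l$ is what lets one run the concatenation argument of Lemma \ref{minimality of Tn}(4) cleanly. Once (2)-type data is recovered, Lemma \ref{minimality of Tn}(4) delivers $N_{d+1}(T)\subset N_{d+1}(T^n)$, and the reverse inclusion $N_{d+1}(T^n)\subset N_{d+1}(T)$ is automatic since $\G_{d+1}(T^n)\subset\G_{d+1}(T)$, giving (1).
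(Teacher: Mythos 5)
Your proposal does not match the paper's route and, more importantly, the implication that carries all the content is not actually proved. The paper establishes only the equivalence $(1)\Leftrightarrow(2)$ (with $(2)\Rightarrow(1)$ coming from Lemma \ref{minimality of Tn}(4)), and cites \cite[Theorem 6.2]{GHSWY} for $(1)\Leftrightarrow(3)$. The heart of the matter is $(1)\Rightarrow(2)$: condition (2) demands a \emph{single} $y$ that works for \emph{every} $l\in\Z$, and your sketch contains no mechanism for producing it. Saying ``take $x$ with $x^{(d+1)}$ a transitive point'' is empty, since $(N_{d+1}(X),\G_{d+1}(T))$ is minimal and every point is transitive; and ``choose $y$ so that $y^{(d+1)}$ lies in an appropriate minimal subset'' does not get off the ground, because a $\tau_{d+1}(T^n)$-minimal subset of $\overline{\O}(x^{(d+1)},\tau_{d+1}(T^n))$ need not contain any diagonal point $y^{(d+1)}$ at all. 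What the paper actually does is a Baire-category argument: from (1) one gets, for each $l$ and each $\ep$, an approximate witness of the form $x'=T^{np}x$; one then defines $A_k$ to be the set of $z$ admitting a single $z'$ that works for all $l$ at scale $1/k$, proves $A_k$ is open and dense by translating witnesses into a given open set $U$ using minimality of $(X,T)$ and uniform continuity of finitely many powers of $T$, and finally extracts the exact $y$ for $w\in\bigcap_kA_k$ as a limit $y=\lim_kw_k$ of the approximate witnesses. None of this is recoverable from your outline.

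The other two legs of your cycle also have concrete problems. In $(2)\Rightarrow(3)$, concatenating $T^{np_i}y\to x$ (from minimality of $(X,T^n)$) with $\tau_d(T^n)^{q_i}x^{(d)}\to\tau_d(T)^ly^{(d)}$ produces $T^{np+jnq}x\to T^{jl}x$, which is a statement about the $\G_d(T^n)$-orbit of $x^{(d)}$; condition (3) requires the pure $\tau_d(T^n)$-orbit statement $T^{jnq}x\to T^{jl}x$, and the diagonal shift $\sigma_d(T^n)^p$ cannot be absorbed into a power of $\tau_d(T^n)$. (The analogous concatenation in Lemma \ref{minimality of Tn}(4) is legitimate precisely because there one is allowed to use the full group $\G_{d+1}(T^n)$.) For $(3)\Rightarrow(1)$ you yourself identify the bootstrap from $d$ controlled coordinates to $d+1$ as ``the main obstacle,'' and the remedies offered (``arrange that $z=T^{(d+1)l}x$'' on a further dense $G_\delta$) are wishes rather than arguments; this direction is exactly the nontrivial content of \cite[Theorem 6.2]{GHSWY}, which would have to be either cited or reproved. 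As it stands, the proposal proves none of the three implications it announces.
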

\begin{proof} It remains to show the equivalence between (1) and (2). It is clear that (2) implies (1), by Lemma \ref{minimality of Tn} (4). Next we are going to show that (1) implies (2).

\medskip
First we show the case $d=1$.
By the assumption $N_2(T)=N_2(T^n)$ we know that for a given $x\in X$ and $l\in\Z$, there are sequences $(p_i)$ and $(q_i)$
of integers such that
\begin{equation}\label{2terms}
T^{np_i+nq_i}x\lra T^lx, T^{np_i+2nq_i}x\lra T^{2l}x.
\end{equation}
Given $\ep>0$ and $l\in\Z$, there are $x'\in X$ and $q\in\Z$ such that
$$\rho(T^{nq}(x'),T^lx)<\ep,\ \ \rho(T^{2nq}(x'),T^{2l}x)<\ep,$$
with $x'=T^{np}x$ for some $p\in\Z$ by (\ref{2terms}).

Let $k\in\N$ and
$$A_k=\left\{z\in X: \exists z'\in X\  s.t.\ \forall l\in\Z\ \exists q\in\Z \text{ with }\ \rho(T^{nq}(z),T^lz'), \rho(T^{2nq}(z),T^{2l}z')<\frac{1}{k}\right\}.$$
It is clear that $A_k$ is non-empty and open. To show it is dense we need to use the minimality of $(X,T)$. To see this let
$U$ be a non-empty open subset of $X$ and assume that $X=\bigcup_{i=1}^{N}T^iU$ for some $N\in\N$. Choose $j>0$ such that $\rho(x_1,x_2)<\frac{1}{j}$ implies that $\rho(T^ix_1,T^ix_2)<\frac{1}{k}$ for $1\le i\le N$.  Given $l\in\Z$, choose $q\in\Z$ satisfying $\rho(T^{nq}(x'),T^lx)<\frac{1}{j}, \rho(T^{2nq}(x'),T^{2l}x)<\frac{1}{j}$ and assume that $T^tx'\in U$ for some $1\le t\le N$. We have
$$\rho(T^{nq}(T^tx'),T^l(T^tx))<\frac{1}{k},\ \ \rho(T^{2nq}(T^tx'),T^{2l}(T^tx))<\frac{1}{k}$$
and hence $T^tx'\in U\cap A_k$.

Let $X_0= \bigcap_{k=1}^\infty A_k$ and $w\in X_0$, then  there is $w_k\in X$ and sequences $(q_k^{(l)})$ such that for any $l\in \Z$, we have
$$\rho(T^{nq^{(l)}_k}(w),T^lw_k)<\frac{1}{k}, \rho(T^{2nq^{(l)}_k}(w),T^{2l}w_k)<\frac{1}{k}.$$
We may assume that $\lim w_k=w'$ we have $T^{nq^{(l)}_k}(w)\lra T^lw'$ and $T^{2nq^{(l)}_k}(w)\lra T^{2l}w'$, for any $l\in\Z$.

\bigskip
Net we turn to the general case. By the assumption $N_{d+1}(T)=N_{d+1}(T^n)$ we know that for a given $x\in X$ and  $l\in\Z$, there are sequences $\{p_i\}$ and $\{q_i\}$ such that
\begin{equation}\label{3terms}
	T^{np_i+nq_i}x\lra T^lx, T^{np_i+2nq_i}x\lra T^{2l}x,\ldots,T^{np_i+(d+1)nq_i}x\lra T^{(d+1)l}x.
\end{equation}
Given $\ep>0$ and $l\in\Z$,  there is $x'\in X$ and $q\in\Z$ such that
$$\rho(T^{nq}(x'),T^lx)<\ep,\ \ \rho(T^{2nq}(x'),T^{2l}x)<\ep,\ldots,\rho(T^{(d+1)nq}(x'),T^{(d+1)l}x)<\ep$$
with $x'=T^{np}x$ for some $p\in\Z$ by (\ref{3terms}).

For $k\in\N$ let
$$A_k=\left\{z\in X: \exists z'\in X\  s.t.\ \forall l\in\Z\ \exists q\in\Z\ \text{with}\ \rho(T^{inq}(z),T^{il}z')<\frac{1}{k}\  \text{for} \ i=1,\ldots,d+1\right\}.$$

Let $U$ be a non-empty open subset of $X$ and assume that $X=\bigcup_{i=1}^{N}T^iU$ for some $N\in\N$. Choose $j>0$ such that $\rho(x_1,x_2)<\frac{1}{j}$ implies that $\rho(T^ix_1,T^ix_2)<\frac{1}{k}$ for $1\le i\le N$.  Given $l\in\Z$, pick $q\in\Z$ with $\rho(T^{inq}(y),T^{il}x)<\frac{1}{j}$ for any $i=1,2,\ldots,d+1$, and assume that $T^tx'\in U$ for some $1\le t\le N$. We have
$$\rho(T^{inq}(T^tx'),T^{il}(T^tx))<\frac{1}{k}$$ for any $i=1,\ldots,d+1$ and hence $T^tx'\in U\cap A_k$.

Let $X_0= \bigcap_{k=1}^\infty A_k$ and $w\in X_0$, then for any $i\in\{1,2,\ldots,d+1\}$ there is $w_k\in X$ and  sequences $(q_k^{(l)})$ and  such that for any $l\in\Z$, we have
$$\rho(T^{inq_k^{(l)}}(w),T^{il}w_k)<\frac{1}{k}.$$
We may assume that $\lim w_k=w'$  then for any $l\in\Z$, we have $$T^{nq_k^{(l)}}(w)\lra T^lw',T^{2nq_k^{(l)}}(w)\lra T^{2l}w',\ldots, T^{(d+1)nq_k^{(l)}}(w)\lra T^{(d+1)l}w'.$$
\end{proof}

We remark that in fact the condition (4) in Lemma \ref{minimality of Tn}
is also equivalent to any condition in the above theorem.

\begin{lem}\label{splits-minimal}
Let $(X,T)$ be a minimal system and $n\ge 2$. Assume that $N_d(T)=\bigcup_{i,j=0}^{n-1}[N_{d}(T)]_{j}^{i}$,
where $[N_d(T)]_j^i=\tau^i\sigma^jN_d(T^n)$.
If $(x,y)\in \RP(N_d)$ then there are $1\le i,j\le n-1$ such that $x,y\in [N_d(T)]^i_j$.

\end{lem}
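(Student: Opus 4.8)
The plan is to use the hypothesis to write $N_d(T)$ as a \emph{finite} disjoint union of $\G_d(T^n)$-minimal sets, to observe that such a finite partition into closed pieces is automatically a partition into \emph{clopen} pieces, and to note that the full group $\G_d(T)$ merely permutes these pieces. Regionally proximal pairs cannot straddle two members of a finite clopen partition that is permuted by the acting group.

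First I would record the structure coming from Lemma \ref{minimality of Tn}: by parts (1) and (2), the family $\mathcal P$ of distinct sets among $\{[N_d(T)]_j^i:0\le i,j\le n-1\}$ is a finite partition of $N_d(T)$ into pairwise disjoint $\G_d(T^n)$-minimal (hence closed) subsets. Since the complement in $N_d(T)$ of each member of $\mathcal P$ is a finite union of the remaining members, each member of $\mathcal P$ is also open in $N_d(T)$, so $\mathcal P$ is a finite clopen partition of $N_d(T)$.

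Next I would check that $\G_d(T)$ acts on the finite set $\mathcal P$ by permutations. As $\G_d(T)$ is abelian, $\G_d(T^n)$ is normal in it, so for $g\in\G_d(T)$ and $P$ a $\G_d(T^n)$-minimal subset of $N_d(T)$ the set $gP$ is again $\G_d(T^n)$-invariant, $\G_d(T^n)$-minimal, and contained in $N_d(T)$; picking any $p\in gP$ and the member $Q\in\mathcal P$ with $p\in Q$, minimality of $Q$ gives $gP=\overline{\O(p,\G_d(T^n))}\subseteq Q$, and minimality of $gP$ then forces $gP=Q$. Hence $P\mapsto gP$ is a well-defined bijection of $\mathcal P$.

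Finally, put $\delta_0=\min\{\rho(P,Q):P,Q\in\mathcal P,\ P\ne Q\}$, which is positive, being the minimum of finitely many positive distances between disjoint compact sets. Given $(x,y)\in\RP(N_d)$ (for the $\G_d(T)$-action), choose $x_k\to x$, $y_k\to y$ in $N_d(T)$ and $g_k\in\G_d(T)$ with $\rho(g_kx_k,g_ky_k)\to 0$, and let $P,Q\in\mathcal P$ be the members containing $x$ and $y$. Since $P$ and $Q$ are open, $x_k\in P$ and $y_k\in Q$ for all large $k$, whence $g_kx_k\in g_kP$ and $g_ky_k\in g_kQ$ with $g_kP,g_kQ\in\mathcal P$; if $P\ne Q$ then $g_kP\ne g_kQ$, so $\rho(g_kx_k,g_ky_k)\ge\delta_0>0$ for all large $k$, contradicting $\rho(g_kx_k,g_ky_k)\to 0$. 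Therefore $P=Q$, and $x,y$ lie in a common $[N_d(T)]_j^i$. I do not anticipate a genuine obstacle: everything rests on the finiteness, closedness and $\G_d(T)$-invariance of the partition, all immediate from Lemma \ref{minimality of Tn}; the only point requiring a little care is the elementary verification that a group element sends each listed $\G_d(T^n)$-minimal piece onto another listed piece.
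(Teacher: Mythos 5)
Your proposal is correct and follows essentially the same route as the paper: both decompose $N_d(T)$ into the finitely many pairwise disjoint $\G_d(T^n)$-minimal closed pieces $[N_d(T)]_j^i$, observe that elements of the acting group permute these pieces, and use the positive minimum distance between distinct pieces to rule out a regionally proximal pair straddling two of them. Your write-up is in fact slightly more careful than the paper's, in that you explicitly verify that every $g\in\G_d(T)$ (not just $g\in\G_d(T^n)$) sends each piece onto another piece, which is what the definition of $\RP$ for the $\G_d(T)$-action actually requires.
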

\begin{proof} Note that each element of $W=\{[N_d(T)]_j^i: 0\le i,j\le n-1\}$ is minimal under the $\G(T^n)$-action.
Thus, any two members are either identical or disjoint. Let $W=\{A_1,\ldots,A_k\}$ with $A_i\cap A_j=\emptyset$ if $i\not=j$.
It is clear for each $g\in \G_d(T^n)$, and each $1\le i\le k$, $g(A_i)\in W$. Then the lemma follows by the definition of $\RP(N_d)$.

Precisely, let $\ep_0=\min\{\rho(A_i,A_j):1\le i,j\le k\}$. For $0<\ep<\ep_0$, there are $x',y'\in N_d(T)$ and $g\in \G(T^n)$
such that $\rho(x,x')<\ep$, $\rho(y,y')<\ep$ and $\rho(gx',gy')<\ep$. It is clear that $x,x'\in A_i$ and $y,y'\in A_j$ for some $1\le i,j\le k$.
Since $gx',gy'$ is in the same element of $W$, it follows that $i=j$.
\end{proof}


\begin{thm}\cite[Theorem B]{GHSWY}\label{5-B}
Let $(X,T)$ be a minimal system and $d\in\N$. Then
the maximal equicontinuous factor of $(N_d(X,T), \langle\sigma_d, \tau_d\rangle)$
is $(N_d(X_1,T), \langle\sigma_d, \tau_d\rangle)$, where as above $X_1$ is the maximal equicontinuous factor of $(X,T)$.
\end{thm}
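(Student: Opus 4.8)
The goal is to identify the maximal equicontinuous factor of $N_d(X)$ with $N_d(X_1)$, and the plan is to prove that ${\bf RP}(N_d(X),\G_d(T))=R_{\pi^{(d)}}$, where $\pi\colon X\to X_1$ is the canonical factor map, $\pi^{(d)}=\pi\times\cdots\times\pi$, and $R_{\pi^{(d)}}=\{(\mathbf x,\mathbf y)\in N_d(X)^2:\pi^{(d)}(\mathbf x)=\pi^{(d)}(\mathbf y)\}$. Since $\G_d(T)$ is abelian, $N_d(X)$ carries an invariant probability measure, so ${\bf RP}(N_d(X),\G_d(T))$ is a closed invariant equivalence relation and the quotient by it is the maximal equicontinuous factor; hence the displayed equality is exactly what is needed.

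First I would dispose of the easy inclusion. The map $\pi^{(d)}$ commutes with $\sigma_d(T)$ and $\tau_d(T)$ and carries $\Delta_d(X)$ onto $\Delta_d(X_1)$, so by continuity and equivariance it carries $N_d(X)=\overline{\O}(\Delta_d(X),\G_d(T))$ onto $N_d(X_1)$; thus $\pi^{(d)}\colon(N_d(X),\G_d(T))\to(N_d(X_1),\G_d(T))$ is a factor map. Because $X_1$ is equicontinuous, the whole family $\{T^n:n\in\Z\}$ is equicontinuous on $X_1$, and every element of $\G_d(T)$ acts on $X_1^d$ coordinatewise by powers of $T$; hence $\G_d(T)$ acts equicontinuously on $X_1^d$, and so does its restriction to the closed invariant minimal subset $N_d(X_1)$. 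Since a factor map onto an equicontinuous system kills the regionally proximal relation, this gives ${\bf RP}(N_d(X),\G_d(T))\subseteq R_{\pi^{(d)}}$. (The same inclusion also follows from the coordinate projections $p_i\colon(N_d(X),\G_d(T))\to(X,\langle T\rangle)$, which intertwine $\tau_d(T)$ with $T^i$, together with ${\bf RP}(X,T)=R_\pi$.)

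The substance is the reverse inclusion $R_{\pi^{(d)}}\subseteq{\bf RP}(N_d(X),\G_d(T))$, equivalently the assertion that every equicontinuous factor of $N_d(X)$ is already a factor of $N_d(X_1)$. The natural route is through the structure theory for topological characteristic factors. Using Theorem \ref{5p-mianthm} (together with an almost one-to-one modification, which affects the maximal equicontinuous factor of neither $X$ nor $N_d(X)$) one shows that the maximal $\infty$-step pro-nilfactor of $N_d(X)$ is $N_d(X_\infty)$, $X_\infty$ being the maximal $\infty$-step pro-nilfactor of $X$; since the maximal equicontinuous factor of any system is that of its maximal $\infty$-step pro-nilfactor, this reduces the claim to the case of an $\infty$-step pro-nilsystem. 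Such a system is an inverse limit of minimal nilsystems with open bonding maps, and both $N_d(\cdot)$ and the passage to the maximal equicontinuous factor are compatible with such inverse limits, so one is left with a single minimal nilsystem $G/\Gamma$: there $N_d(G/\Gamma)$ is again a minimal nilsystem (Glasner \cite{G94}), and a commutator computation in the nilpotent group $G$ identifies its maximal equicontinuous factor with $N_d((G/\Gamma)_{eq})$.

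The main obstacle is this reverse inclusion, and what makes it genuinely nontrivial --- rather than a formal consequence of openness or saturation --- is the phenomenon exhibited by Theorem D: $X_{eq}$ need not be a topological characteristic factor of $X$, so $N_d(X)$ need not be $\pi^{(d)}$-saturated over $N_d(X_1)$, and there is no way to transport the maximal equicontinuous factor through a single saturated extension. One is genuinely pushed down to the pro-nil, ultimately nilsystem, level, where the result becomes an explicit computation in nilpotent groups. (Equivalently, if one is willing to invoke the subsequent work of Lian and Qiu \cite{LQ}, the theorem is the case $k=1$ of their identification of the maximal $k$-step nilfactor of $N_d(X)$ with $N_d(X_k)$, a $1$-step pro-nilsystem being exactly an equicontinuous system.) A minor point is that Lemma \ref{lem-saturated} yields saturation of $N_{d+1}$ only from a $d$-step characteristic factor with $d\ge 2$, so the case $d=2$ of the theorem must be handled by a separate direct argument.
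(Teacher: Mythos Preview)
The statement is quoted from \cite[Theorem B]{GHSWY} and is not given a proof in the present paper, so there is no in-paper argument to compare against; I comment only on the soundness of your sketch.

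The easy inclusion is fine, and you correctly locate the difficulty in the reverse inclusion $R_{\pi^{(d)}}\subseteq{\bf RP}(N_d(X),\G_d(T))$. The gap is in your reduction step. You write that from Theorem~\ref{5p-mianthm} ``one shows that the maximal $\infty$-step pro-nilfactor of $N_d(X)$ is $N_d(X_\infty)$'', but this assertion is precisely the Lian--Qiu theorem, which \emph{generalizes} the result you are trying to prove; you have replaced the problem by one at least as hard. Theorem~\ref{5p-mianthm} together with Lemma~\ref{lem-saturated} yields only that, after the almost $1$--$1$ modification, $N_d(X^*)$ is $(\pi^*)^{(d)}$-saturated over $N_d(X_\infty^*)$, i.e.\ that the fibres of $(\pi^*)^{(d)}$ restricted to $N_d(X^*)$ are the full products $\prod_i(\pi^*)^{-1}(z_i)$. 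Saturation alone says nothing about where those fibres sit relative to ${\bf RP}(N_d(X^*),\G_d)$ or ${\bf RP}^{[\infty]}(N_d(X^*),\G_d)$: passing from ``the fibre is a product of $\pi^*$-fibres, and each $\pi^*$-fibre lies in ${\bf RP}(X^*,T)$'' to ``the product fibre lies in ${\bf RP}(N_d(X^*),\G_d)$'' is exactly the step that carries all the content, and you have not supplied it. Your invocation of \cite{LQ} at the end is then circular, and without that step the subsequent reduction to inverse limits of nilsystems never gets off the ground.

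What the proof in \cite{GHSWY} actually does --- in direct analogy with the present paper's proof of its Theorem~B for $S_{dis}$ --- is a coordinate-by-coordinate lift: use saturation to interpolate so that it suffices to treat pairs in $N_d(X^*)$ differing in a single coordinate $j$ with $(x_j,y_j)\in R_{\pi^*}\subseteq{\bf RP}(X^*,T)$, and then lift the regionally-proximal witnesses from the $j$-th coordinate up to $N_d(X^*)$ using openness of $\pi^*$ together with the product form of the fibres. Only once the inclusion $R_{(\pi^*)^{(d)}}\subseteq{\bf RP}(N_d(X^*),\G_d)$ is established does one learn that $(N_d(X^*))_{eq}$ factors through $N_d(X_\infty^*)$, and only then does a further reduction to pro-nilsystems (and thence to nilsystems) become available.
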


We now give another approach of Theorem A based on Theorem \ref{5-B} and Theorem \ref{equivalence}.
\begin{proof}[Proof of Theorem A]
It suffices to show that the minimality of $(X, T^{n})$ implies the equality  $N_d(T^n)=N_d(T)$ for any $d\in\N$.

 By Lemma \ref{minimality of Tn}, we have $N_d(T)=[N_d(T)]^{0}\cup [N_d(T)]^{1}\cup \cdots\cup [N_d(T)]^{n-1}$ and each $[N_d(T)]^{i}$ is $\G_d(T^n)$-minimal. Thus any pair of $[N_d(T)]^i$ and $[N_d(T)]^j$ are either identical or disjoint.

\medskip
Next we are going to show that $[N_d(T)]^{i}=[N_d(T)]^{1}$ for each $i\in\{0,1,\cdots,n-1\}$ by induction on $d$, which will implies that $N_d(T)=N_d(T^n)$, by noting that $N_{d}(T^n)=[N_{d}(T)]^{0}$.

\medskip
For $d=1,2$, it is clear that $N_1(T)=N_1(T^n)=X$ and $N_2(T)=N_2(T^n)=X\times X$, by the minimality of $(X,T^n)$. Now we assume that $N_d(T)=N_d(T^n)$ holds for some $d\geq 2$. We are going to show $N_{d+1}(T)=N_{d+1}(T^n)$.

\medskip
For $j\in\{0,1,\cdots, n-1\}$, by taking $l=-j$ in Theorem \ref{equivalence} (2), there is a dense $G_{\delta}$ subset $X_0$ of $X$ such that for each $x\in X_0$, there is some $u=u(x)\in X$ and a sequence $(q_i)$ of integers with  $q_i\equiv j\mod n$ satisfying
\begin{equation}\label{eq3.1}
T^{q_i}x\ra u, T^{2q_i}x\ra u,\ldots,T^{dq_i}x\ra u,
\end{equation}
which implies that $(x,u)\in\RP(X,T)$ and $(x,u,\cdots,u)\in [N_{d+1}(T)]^{j}$. Thus $$(x^{(d+1)}, (x,u,\cdots,u))\in\RP (N_{d+1}(T))$$
according to Theorem \ref{5-B}. By noting that $x^{(d+1)}\in [N_{d+1}(T)]^{0}$ and Lemma \ref{splits-minimal}, we have $[N_{d+1}(T)]^{0}=[N_{d+1}(T)]^{j}$ whence  $N_{d+1}(T)=N_{d+1}(T^n)$. This completes the proof.
\end{proof}

For a minimal distal system $(X,T)$, we have an approach of Theorem A (without using Theorem \ref{5-B})
by using the following theorem 
which is a direct consequence of Theorem \ref{5p-mianthm}.
\begin{thm}\cite[Theorem 4.3]{GHSWY}\label{main-distal}
Let $(X,T)$ be a minimal system which is an open extension of its maximal distal factor,
and $d\in \N$. Then $X_d$ is a $(d+1)$-step topological characteristic factor of $X$.
\end{thm}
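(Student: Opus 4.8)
The plan is to reduce Theorem~\ref{main-distal} to a saturation property of length-$(d+1)$ arithmetic-progression orbit closures inside the $\infty$-step pro-nilsystem $X_\infty$, a property which is ultimately a structural feature of nilsystems. Write $\pi\colon X\to X_{dis}$ for the factor map onto the maximal distal factor (open by hypothesis), and let $q\colon X\to X_d$ and $\pi_\infty\colon X\to X_\infty$ be the factor maps onto the maximal $d$-step and $\infty$-step pro-nilfactors. Since $X_d$ and $X_\infty$ are distal (inverse limits of minimal nilsystems), each is a factor of $X_{dis}$, so there are factor maps $\theta\colon X_{dis}\to X_\infty$ and $\psi\colon X_\infty\to X_d$ with $\pi_\infty=\theta\circ\pi$ and $q=\psi\circ\pi_\infty$; moreover $\psi$ is precisely the maximal $d$-step pro-nilfactor map of $X_\infty$.

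The first step is to observe that $\pi_\infty$ is open. Indeed, every factor map out of the minimal distal system $X_{dis}$ is open, so $\theta$ is open, and composing with the open map $\pi$ shows $\pi_\infty=\theta\circ\pi$ is open (and likewise $q$). Now apply Theorem~\ref{lab55} with $Y=X_\infty$: the map $\pi_\infty$ is open and $X_\infty$ is trivially a factor of itself, so $X_\infty$ is a $k$-step topological characteristic factor of $X$ for every $k\in\N$ (in particular for $k=d+1$). Thus there is a dense $G_\delta$ set $\Omega_0\subset X$ such that $L_x:=\overline{\O}(x^{(d+1)},\tau_{d+1}(T))$ is $\pi_\infty^{(d+1)}$-saturated for every $x\in\Omega_0$. (Alternatively one reads this off Theorem~\ref{5p-mianthm} directly: since $\pi_\infty$ is already open, the almost one-to-one modifications there may be taken to be identities.)

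The second step descends from $X_\infty$ to $X_d$. Consider $X^{d+1}\xrightarrow{\pi_\infty^{(d+1)}}X_\infty^{d+1}\xrightarrow{\psi^{(d+1)}}X_d^{d+1}$, whose composite is $q^{(d+1)}$. By Lemma~\ref{lem-saturated-basic}(2), for $x\in\Omega_0$ the set $L_x$, being $\pi_\infty^{(d+1)}$-saturated, will be $q^{(d+1)}$-saturated as soon as $\pi_\infty^{(d+1)}(L_x)$ is $\psi^{(d+1)}$-saturated. But $\pi_\infty^{(d+1)}(L_x)$ is compact and contains the $\tau_{d+1}(T)$-orbit of $(\pi_\infty x)^{(d+1)}$, hence it equals $\overline{\O}((\pi_\infty x)^{(d+1)},\tau_{d+1}(T))$ computed in $X_\infty$. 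Thus the theorem reduces to the claim that \emph{in the minimal $\infty$-step pro-nilsystem $X_\infty$ the set $\overline{\O}(y^{(d+1)},\tau_{d+1}(T))$ is $\psi^{(d+1)}$-saturated for $y$ in some dense $G_\delta$ set $\Omega_1\subset X_\infty$}, i.e. that $X_d$ is a $(d+1)$-step topological characteristic factor of $X_\infty$. Granting this, $\Omega:=\Omega_0\cap\pi_\infty^{-1}(\Omega_1)$ is a dense $G_\delta$ subset of $X$ (since $\pi_\infty$ is an open continuous surjection), and it witnesses the conclusion of the theorem.

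The hard part is this last claim, which is a statement purely about nilsystems, and I expect it to be the main obstacle. I would prove it by writing $X_\infty$ as an inverse limit of minimal finite-step nilsystems and reducing, by a routine inverse-limit argument (orbit closures commute with inverse limits), to a single minimal $k$-step nilsystem $G/\Gamma$ with $T$ a translation by $g_0\in G$; there $\overline{\O}(y^{(d+1)},\tau_{d+1}(T))$ is the closure of the polynomial orbit $n\mapsto(g_0^ny,g_0^{2n}y,\dots,g_0^{(d+1)n}y)$ in $(G/\Gamma)^{d+1}$, which by Leibman's theorem on polynomial orbits on nilmanifolds is a sub-nilmanifold $H\cdot y^{(d+1)}$; a Vandermonde-type commutator computation shows that the closed subgroup $H\le G^{d+1}$ contains the $(d+1)$-fold product of the $(d+1)$-st term of the lower central series of $G$, and since that subgroup is exactly what the maximal $d$-step pro-nilfactor map $\psi$ collapses, this is precisely the required $\psi^{(d+1)}$-saturation. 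This is the topological counterpart of the classical fact that the $d$-step nilfactor is the characteristic factor for $\frac1N\sum_{n}f_1(T^nx)f_2(T^{2n}x)\cdots f_{d+1}(T^{(d+1)n}x)$; in the precise form needed it is essentially contained in the analysis of $N_{d+1}$ of pro-nilsystems carried out in \cite{GHSWY}, and pinning down (or re-deriving) the cleanest reference for it is where I expect the real work to be.
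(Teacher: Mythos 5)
The paper itself offers no proof of this statement: it is imported verbatim from \cite[Theorem 4.3]{GHSWY}, with only the (rather loose) remark that it is ``a direct consequence of Theorem \ref{5p-mianthm}.'' So there is no in-paper argument to compare against line by line. That said, your reduction is exactly the architecture of the proof in the cited source: since $X_d$ and $X_\infty$ are distal they factor through $X_{dis}$, factor maps of minimal distal systems are open, so $\pi_\infty$ is open and Theorem \ref{lab55} (with $Y=X_\infty$) gives that $X_\infty$ is a $(d+1)$-step topological characteristic factor of $X$; the identification $\pi_\infty^{(d+1)}(L_x)=L_{\pi_\infty x}$, the composition of saturations via Lemma \ref{lem-saturated-basic}(2), and the transfer of genericity through the open map $\pi_\infty$ are all correct. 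You have also correctly diagnosed that Theorem \ref{5p-mianthm} alone cannot yield the statement, since it only produces $X_\infty^*$, not $X_d$; an extra input about progressions in pro-nilsystems is unavoidable.

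That extra input is where the proposal stops being a proof. The claim that $X_d$ is a $(d+1)$-step topological characteristic factor of the minimal $\infty$-step pro-nilsystem $X_\infty$ is the entire content of the theorem beyond formal bookkeeping, and you ultimately defer it back to the analysis in \cite{GHSWY} --- the very reference the theorem is being quoted from --- which makes the argument circular as written. The sketch you do give also glosses over real difficulties: Leibman's theorem yields in general a \emph{finite union} of sub-nilmanifolds cyclically permuted by the sequence, not a single coset $H\cdot y^{(d+1)}$, so one must handle connectivity and rationality of the relevant subgroups; and the group $H$ attached to the closure of the single orbit $n\mapsto (g_0^n,\dots,g_0^{(d+1)n})$ is \emph{not} the group generated by all tuples $(g,g^2,\dots,g^{d+1})$, $g\in G$, so the ``Vandermonde-type commutator computation'' showing $H\supset G_{d+1}^{\,d+1}$ has to be carried out for the correct (smaller, $g_0$-dependent) group, typically by induction on the nilpotency class using the vertical torus. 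None of this is impossible --- it is done in the literature --- but as it stands the decisive step is a black box, so the proposal is a correct reduction rather than a proof.
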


\begin{proof}[Proof of Theorem A for minimal distal systems] We use induction on $d$. It is trivial for $d=1,2$.
Now assume that $N_d(T)=N_d(T^n)$ for some $d\geq 2$.  Then, by applying Theorem \ref{equivalence} (2), there is a dense $G_\delta$ subset $X_0$ such that for any $x\in X_0$ and each $l\in\mathbb{Z}$,
there is some $u=u(x)\in X$ and a sequence $(q_i)$ of integers with $q_i\equiv -l\mod n$ satisfying
\begin{equation}\label{eq3.3}
T^{q_i}x\lra u, T^{2q_i}x\lra u,\ldots,T^{dq_i}x\lra u,
\end{equation}
which implies that $(x,u)\in \RP^{[d-1]}(X,T)=\RP^{[d-1]}(X,T^n)$. Since $(X,T^n)$ is minimal, the maximal $d-1$-step pronilfactor
of $(X,T^n)$ is the same as the one of $(X,T)$.
WLOG we assume that $X_0$ is also the set of saturation points both for $T$ and $T^k$, which is also a dense $G_{\delta}$ set.

Since $(X,T)$ is distal, $L_x$ is minimal and $(X,T^k)$ is minimal, we know that $\pi_{d-1}^{-1}\pi_{d-1}(x)\subset X_0$,
where $\pi_{d-1}:X\lra X_{d-1}$ is the factor map.
By Theorem \ref{main-distal} (applying to $(X,T^n)$) there is a sequence $(p_i)$ of integers with $p_i\equiv 0\mod n$ for each $i$ such that
\begin{equation}\label{eq3.4}
T^{p_i}u\lra x, T^{2p_i}u\lra x,\ldots, T^{dp_i}u\lra x,
\end{equation}
Combining (\ref{eq3.3}) and (\ref{eq3.4}),  there is a sequence $(r_i)$ of integers satisfying
\begin{equation*}
T^{nr_i}x\lra T^{l}x, T^{2nr_i}x\lra T^{2l}x,\ldots, T^{dnr_i}x\lra T^{dl}x,
\end{equation*}
which implies that $N_{d+1}(T)=N_{d+1}(T^n)$ according to Theorem \ref{equivalence}. Thus we complete the proof.
\end{proof}

\section{Maximal distal factors}

In this section, we  will show that the maximal distal factor of $N_d(X)$ is $N_{d}(X_{dis})$.

\subsection{Closed proximal relations} In this subsection, we characterize the maximal distal factor of $N_{d}(X)$ under the condition that the proximal relation of $X$ is closed.

\begin{lem}\label{liftproximal}
Let $(X,T)$ be a t.d.s. and $d\in \N$.  Then
\begin{enumerate}
\item for any $n\in\N$, ${\bf P}(X, T)={\bf P}(X, T^n)$ ;
\item If ${\bf P}(X)$ is closed, then
\begin{equation}\label{eq4.2}
{\bf P}(N_d(X))=\{((x_i)_{1}^{d},(y_i)_{1}^{d})\in N_{d}(X)\times N_d(X): (x_i,y_i)\in {\bf P}(X,T), i=1,\cdots,d\}.
\end{equation}
\end{enumerate}
\end{lem}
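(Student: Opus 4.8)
The plan is to prove the two parts separately, noting that part (1) is standard and part (2) is the substantial one. For part (1), I would observe that a pair $(x,y)$ is proximal for $T$ iff there is a net $g_i \in \Z$ with $\rho(T^{g_i}x, T^{g_i}y)\to 0$. Since $T^n$ generates a finite-index subgroup of $\langle T\rangle$, I can appeal directly to Lemma \ref{finite index subgroup}: the identity extension $\mathrm{id}\colon (X,T)\to (X,T)$ is proximal on any fibre precisely along proximal pairs, but more simply, if $(x,y)$ is $T^n$-proximal it is obviously $T$-proximal; conversely, if $\rho(T^{g_i}x,T^{g_i}y)\to 0$ write $g_i = n q_i + r_i$ with $r_i\in\{0,\dots,n-1\}$, pass to a subnet with $r_i\equiv r$ constant, and use continuity of $T^{-r}$ to get $\rho(T^{nq_i}x, T^{nq_i}y)\to 0$, so $(x,y)$ is $T^n$-proximal. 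Hence ${\bf P}(X,T)={\bf P}(X,T^n)$.

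For part (2), write $Q$ for the right-hand side of \eqref{eq4.2}. The inclusion ${\bf P}(N_d(X))\subset Q$ is easy: each coordinate projection $p_i\colon N_d(X)\to X$ intertwines the $\sigma_d$-action with $T$, so a $\G_d(T)$-proximal pair projects to a $T$-proximal pair under each $p_i$ (along a common net). The reverse inclusion $Q\subset {\bf P}(N_d(X))$ is the heart of the matter. Given $((x_i)_1^d,(y_i)_1^d)\in Q$, I need a single net in $\G_d(T)$ simultaneously collapsing all $d$ coordinates. The key point is that, since $(X,T)$ is the relevant system here and ${\bf P}(X)$ is assumed closed, ${\bf P}(X)$ is a closed invariant equivalence relation (by \cite[Chapter 6, Corollary 11]{Aus}); moreover the quotient $X/{\bf P}(X)$ is the maximal distal factor and the quotient map $\pi$ is proximal. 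I would then consider the extension $\pi^{(d)}\colon (X^d,\sigma_d)\to ((X/{\bf P})^d,\sigma_d)$, which by Lemma \ref{extension for product} is a proximal extension, and restrict it to $N_d(X)$. Since $((x_i),(y_i))\in Q$ means exactly $\pi^{(d)}((x_i))=\pi^{(d)}((y_i))$, i.e.\ the pair lies in $R_{\pi^{(d)}|_{N_d(X)}}$, and a proximal extension has $R\subset {\bf P}$, it would follow that the pair is $\sigma_d$-proximal in $N_d(X)$, hence $\G_d(T)$-proximal; and conversely $\G_d$-proximal pairs are already handled.

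The step I expect to be the main obstacle is verifying that the restricted extension $N_d(X)\to N_d(X/{\bf P})$ (or into the image of $\pi^{(d)}$ inside $(X/{\bf P})^d$) is still proximal, i.e.\ that proximality is inherited by the subsystem $N_d(X)$ of $X^d$. This requires checking that $N_d(X)$ is $\G_d$-invariant (true by construction) and that a pair which is $\sigma_d$-proximal inside $X^d$ via some net $\sigma_d(T)^{g_i}$ stays inside $N_d(X)$ — which is automatic since $N_d(X)$ is closed and $\sigma_d$-invariant, so the whole net and its limit remain in $N_d(X)$. A secondary subtlety is that one must use the $\G_d(T)$-action rather than merely $\sigma_d(T)$ to identify ${\bf P}(N_d(X))$; but since $\sigma_d(T)\in\G_d(T)$, any $\sigma_d$-proximal pair is a fortiori $\G_d$-proximal, and for the other direction (proving a $\G_d$-proximal pair lies in $Q$) one uses that each projection $p_i\colon (N_d(X),\G_d(T))\to (X,T^{a_i})$ for appropriate exponents $a_i$ intertwines the actions, combined with part (1) to pass back to ${\bf P}(X,T)$. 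Once these are in place, both inclusions close up and \eqref{eq4.2} follows.
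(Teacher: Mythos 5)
Your proposal is correct, but for part (2) it takes a genuinely different route from the paper. The paper proves the inclusion $Q\subset{\bf P}(N_d(X))$ by induction on $d$: assuming the first $d$ coordinates have been collapsed along $\sigma_d^{p_i}\tau_d^{q_i}$, it passes to a subsequence so that the $(d+1)$-st coordinates converge to a pair $(z_1,z_2)$, uses closedness of ${\bf P}(X)$ to conclude $(z_1,z_2)\in{\bf P}(X)$, and then runs an explicit uniform-continuity/diagonal argument to splice the two collapsing sequences together. You instead use closedness of ${\bf P}(X)$ to form the quotient $\pi\colon X\to X/{\bf P}(X)$ (a proximal extension, since $R_\pi={\bf P}(X)$), invoke Lemma \ref{extension for product} to see that $\pi^{(d)}\colon (X^d,T^{(d)})\to ((X/{\bf P})^d,T^{(d)})$ is proximal, observe that $Q$ is exactly $R_{\pi^{(d)}}$ intersected with $N_d(X)\times N_d(X)$, and conclude that pairs in $Q$ are already $\sigma_d$-proximal, hence $\G_d$-proximal. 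This is cleaner in that it delegates the diagonal argument to the already-proved product lemma (whose proof does not use minimality, so the application is legitimate), at the cost of introducing the quotient system; the paper's induction is self-contained and avoids forming $X/{\bf P}(X)$, though it is essentially the same splicing argument done by hand. Two small remarks: the restriction of $\pi^{(d)}$ to $N_d(X)$ that you worry about is not actually needed --- you only need the two given points of $X^d$ to be proximal under the diagonal action, and the witnessing powers of $\sigma_d$ lie in $\G_d(T)$ and preserve $N_d(X)$; and in the easy inclusion, the coordinate projection does not intertwine $\G_d(T)$ with a single $\langle T^{a_i}\rangle$ (the element $\sigma_d^p\tau_d^q$ acts on the $i$-th coordinate as $T^{p+iq}$, and these exponents exhaust $\Z$), so part (1) is not needed there; your first formulation of that direction is the correct one.
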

\begin{proof}
(1) It is clear that ${\bf P}(X, T^n)\subset{\bf P}(X, T)$. By the proof of Lemma \ref{finite index subgroup}, we have
${\bf P}(X, T)\subset{\bf P}(X, T^n)$. Thus (1) holds.

\medskip
(2) It is also clear that ${\bf P}(N_d(X))\subset {\bf RHS}$ of (\ref{eq4.2}), where ${\bf RHS}$ stands for the right hand side.
Next we show ${\bf P}(N_d(X))\supset {\bf RHS}$ of (\ref{eq4.2}) by induction on $d$. It is obvious for $d=1$. Now assume it holds for $d$.

Suppose that  $((x_i)_{1}^{d+1},(y_i)_{1}^{d+1})\in N_{d+1}(X)\times N_{d+1}(X)$ satisfying for any $i=1,\cdots,d+1, (x_i,y_i)\in {\bf P}(X,T)$.
By induction assumption, we have $((x_i)_{1}^{d},(y_i)_{1}^{d})\in{\bf P}(N_d(X))$. Then there exist sequences $(p_i)$ and $(q_i)$ of integers such that
$$T^{p_i}\times T^{p_i+q_i}\times T^{p_i+2q_i}\times\cdots\times T^{p_i+(d-1)q_i}((x_i)_{1}^{d},(y_i)_{1}^{d})\lra \text{ a point in }\Delta(N_d(X)).$$
By passing to some subsequences, we may assume that $T^{p_i+dq_i}x_{d+1}\ra z_1$ and $T^{p_i+dq_i}y_{d+1}\ra z_2$ for some $z_1,z_2\in X$. Since ${\bf P}(X)$ is closed, we have $(z_1,z_2)\in {\bf P}(X)$. Thus there is a sequence $(r_i)$ of integers such that $T^{r_i}(z_1,z_2)$ tends to a point in $\Delta(X)$. For any $k\in \N$, by uniform continuity, there exists some positive integers $m_k$ and $n_{k}$ such that for any $n\geq n_k$,
$$\rho\left( T^{r_{m_k}+p_{n}}\times\cdots\times T^{r_{m_k}+p_{n}+dq_{n}}(x_i)_{1}^{d+1}, T^{r_{m_k}+p_{n}}\times\cdots\times T^{r_{m_k}+p_{n}+dq_{n}}(y_i)_{1}^{d+1}\right)<\frac{1}{k}.$$
We may assume that $(m_k)$ and $(n_k)$ are increasing. Then
$$\rho\left( T^{r_{m_k}+p_{n_k}}\times\cdots\times T^{r_{m_k}+p_{n}+dq_{n_k}}(x_i)_{1}^{d+1}, T^{r_{m_k}+p_{n_k}}\times\cdots\times T^{r_{m_k}+p_{n_k}+dq_{n_k}}(y_i)_{1}^{d+1}\right)\ra 0.$$
So $((x_i)_{1}^{d+1},(y_i)_{1}^{d+1})\in{\bf P}(N_{d+1}(X))$. Thus (2) holds.
 \end{proof}

\begin{thm}\label{cl prox}
	Let $(X,T)$ be a minimal t.d.s.
	If the proximal relation of $(X,T)$ is closed and $d\in \N$, then the maximal distal factor of $(N_d(X,T),\G_d)$ is $(N_d(X_{dis},T),\G_d)$, where $X_{dis}$ is the maximal distal factor of $(X,T)$.
\end{thm}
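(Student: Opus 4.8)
The plan is to realize \emph{both} maximal distal factors as quotients by proximal relations; the hypothesis that ${\bf P}(X)$ is closed, pushed through Lemma \ref{liftproximal}, is precisely what makes this work on the $N_d$ level as well.

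\emph{Step 1.} Since $(X,T)$ is minimal and ${\bf P}(X)$ is closed, ${\bf P}(X)$ is a closed invariant equivalence relation by \cite[Chapter 6, Corollary 11]{Aus}. By \cite{EG}, $S_{dis}(X)$ is the smallest closed invariant equivalence relation containing ${\bf P}(X)$; as ${\bf P}(X)$ is already such a relation, $S_{dis}(X)={\bf P}(X)$, hence $X_{dis}=X/{\bf P}(X)$. Write $\pi\colon X\ra X_{dis}$ for the factor map, so that $R_\pi={\bf P}(X)$.

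\emph{Step 2.} By Lemma \ref{liftproximal}(2), ${\bf P}(N_d(X))$ equals the right-hand side of (\ref{eq4.2}); since ${\bf P}(X)$ is closed in $X\times X$, this set is closed in $N_d(X)\times N_d(X)$ (it is the intersection of $N_d(X)^2$ with finitely many preimages of ${\bf P}(X)$ under coordinate projections). As $(N_d(X),\G_d)$ is minimal (Glasner, \cite{G94}), ${\bf P}(N_d(X))$ is then a closed invariant equivalence relation, so exactly as in Step 1 we get $S_{dis}(N_d(X))={\bf P}(N_d(X))$. Therefore the maximal distal factor of $(N_d(X),\G_d)$ is $N_d(X)/{\bf P}(N_d(X))$.

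\emph{Step 3.} It remains to identify $N_d(X)/{\bf P}(N_d(X))$ with $(N_d(X_{dis}),\G_d)$. The map $\pi^{(d)}\colon X^d\ra X_{dis}^d$ is $\G_d$-equivariant and sends $\Delta_d(X)$ onto $\Delta_d(X_{dis})$; since a factor map carries an orbit closure onto the orbit closure of the image, $\pi^{(d)}(N_d(X))=N_d(X_{dis})$, so $\pi^{(d)}$ restricts to a factor map $N_d(X)\ra N_d(X_{dis})$. For $(x_i)_{1}^{d},(y_i)_{1}^{d}\in N_d(X)$ one has $\pi^{(d)}((x_i)_{1}^{d})=\pi^{(d)}((y_i)_{1}^{d})$ iff $(x_i,y_i)\in R_\pi={\bf P}(X)$ for every $i$, which by (\ref{eq4.2}) is precisely the condition $((x_i)_{1}^{d},(y_i)_{1}^{d})\in{\bf P}(N_d(X))$. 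Hence $R_{\pi^{(d)}|_{N_d(X)}}={\bf P}(N_d(X))$, and $\pi^{(d)}$ induces a $\G_d$-isomorphism $N_d(X)/{\bf P}(N_d(X))\cong N_d(X_{dis})$, completing the proof.

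The substantive input is Lemma \ref{liftproximal}(2): that closedness of ${\bf P}(X)$ forces ${\bf P}(N_d(X))$ into the explicit ``fibrewise proximal'' form of (\ref{eq4.2}), and hence to remain closed. Granting this, the argument is essentially bookkeeping with equivalence relations; the only points needing a little care are the $\G_d$-equivariance of the induced isomorphism and the identity $\pi^{(d)}(N_d(X))=N_d(X_{dis})$, both of which are standard facts about factor maps and orbit closures.
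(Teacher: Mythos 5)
Your proof is correct and follows essentially the same route as the paper's: both rest on Lemma \ref{liftproximal}(2) to get the fibrewise description (\ref{eq4.2}) of ${\bf P}(N_d(X))$, deduce that it is closed and hence equals $S_{dis}(N_d(X))$, and then identify the quotient with $N_d(X_{dis})$. Your Steps 1 and 3 merely spell out details (why $X_{dis}=X/{\bf P}(X)$ and why $\pi^{(d)}$ induces the isomorphism) that the paper leaves implicit.
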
	
\begin{proof}
By Lemma \ref{liftproximal}, we have
\begin{equation}\label{eq5.3}
{\bf P}(N_d(X))=\{((x_i)_{1}^{d},(y_i)_{1}^{d})\in N_{d}(X)\times N_d(X): (x_i,y_i)\in {\bf P}(X,T), i=1,\cdots,d\}.
\end{equation}
Thus ${\bf P}(N_d(X))$ is closed. Recall that if a proximal relation is closed, then it is an equivalence relation (see \cite[Chapter 6, Corollary 11]{Aus}). It follows that $X_{dis}=X/{\bf P}(X)$ and $(N_d(X))_{dis}=N_d(X)/{\bf P}(N_d(X))$. By (\ref{eq5.3}), we conclude that $(N_d(X))_{dis}=N_d(X_{dis})$.

\end{proof}

We end the subsection with the following remark.

\begin{rem}We remark that even in the assumption $(X,T)$ is minimal and $\pi:X\lra X_{eq}$ is almost 1-1, it is not true that
$(x_1,x_2,x_3)\in N_3$ if $\pi(x_1)=\pi(x_2)=\pi(x_3)$, as $N_3$ can be seen as a subset of $\mathbf{Q}^{[2]}$, see \cite{TY}.
\end{rem}
	
\subsection{Open extensions}
 We start with some properties of topological characteristic factors.

\bigskip
Let $Z$ be a compact metric space. Let $2^{Z}$ be the space of nonempty closed subsets of $Z$, which is also a compact metric space endowed with Hausdorff metric. For a sequence $(A_n)$ in $2^{Z}$, define
$\liminf A_n=\{z\in Z: \exists z_n\in A_n~~s.t.~~ z_n\ra z\}$ and  $$ \limsup A_n=\{z\in Z: \exists \text{ subsequence } (n_i) \text{ and }z_{n_i}\in A_{n_i}~~s.t.~~ z_{n_i}\ra z\}.$$
Let $Y$ be a metric space. A map $f: Y\ra 2^{Z}$ is {\it lower semi-continuous} (resp. {\it upper semi-continuous}) at $y\in Y$ if for any sequence $(y_n)$ with $y_n\ra y$, $\liminf f(y_n)\supset f(y)$ (resp. $\limsup f(y_n)\subset f(y)$ ). Note that $f$ is lower semi-continuous  at $y$ if for any open set $V$ with $f(y)\cap V\neq\emptyset$, $\{y'\in Y: f(y')\cap V\neq\emptyset\}$  is a neighborhood of $y$, and
$f$ is upper semi-continuous at $y$ if for any open set $V$ of $Z$ containing $f(y)$,   $\{y'\in Y: f(y')\subset V\}$ is a neighborhood of $y$.
\begin{lem}\label{thm:genericity}
Let $(X,T)$ be a minimal system and $d\ge 2$, then for a dense $G_{\delta}$ subset
$X_{0}\subset X$ such that for $x\in X_0$ one has $N_d[x]:=\{(x_1,\cdots,x_d)\in N_d(X):x_1=x\}=\{x\}\times L_x$, where $L_x$
is the orbit closure of $x^{(d-1)}$ under $T\times T^2\times \ldots \times T^{d-1}$ action.
\end{lem}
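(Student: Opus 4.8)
The plan is to show that the "fiber over the first coordinate" map $x \mapsto N_d[x]$ is continuous on a dense $G_\delta$ set, and that on this set the fiber coincides with $\{x\}\times L_x$. First I would set up the multivalued map $F\colon X \to 2^{X^{d-1}}$ by $F(x) = \{(x_2,\ldots,x_d) : (x,x_2,\ldots,x_d)\in N_d(X)\}$, which is well-defined and takes values in the nonempty closed subsets of $X^{d-1}$ because $N_d(X)$ is closed and the projection to the first coordinate is surjective (it contains the diagonal). I would then observe that $F$ is always upper semi-continuous: if $x_n \to x$ and $(y^{(n)}) \to y$ with $y^{(n)}\in F(x_n)$, then $(x_n, y^{(n)}) \in N_d(X)$ converges to $(x,y)$, which lies in the closed set $N_d(X)$, so $y\in F(x)$. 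The standard fact (Kuratowski/Fort) that an upper semi-continuous compact-valued map into a compact metric space is lower semi-continuous, hence continuous, precisely on a dense $G_\delta$ subset $X_0$ of $X$ then gives the genericity; I would either cite this or reproduce the short Baire-category argument using the sets $\{x : F \text{ oscillates by} < 1/k \text{ at } x\}$.

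Next I would identify $F(x)$ with $L_x$ on $X_0$. One inclusion is free: since $N_d(X) \supset \overline{\mathcal{O}}(x^{(d)}, \tau_d(T))$, applying $\sigma_d$-translates we get $\{x\}\times L_x \subset N_d[x]$ for every $x$ (more precisely, for points in the appropriate orbit; but since $N_d(X)$ contains the full $\G_d$-orbit closure of the diagonal, the inclusion $\{x\}\times L_x\subset N_d[x]$ holds for all $x\in X$ by taking $p=0$ in the generating formula for $N_d(X)$ and closing up). For the reverse inclusion on $X_0$, I would use minimality of $(X,T)$ together with continuity of $F$ at points of $X_0$: pick $x\in X_0$ and a point $x^{(d)}$ on the diagonal; since the diagonal orbit is dense in $N_d(X)$ and $\G_d(T) = \langle \sigma_d, \tau_d\rangle$, any $(x, y_2,\ldots,y_d)\in N_d[x]$ is a limit of points $\sigma_d^{p_i}\tau_d^{q_i}$ applied to a diagonal point $z^{(d)}$; the first coordinate $T^{p_i + q_i}z \to x$, and by continuity of $F$ at $x$ we can replace this converging sequence of base points, whose fibers are all (translates of) $L$-type sets, to conclude $(y_2,\ldots,y_d)$ lies in $L_x$. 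The cleanest route: the fiber map $x\mapsto F(x)$ is $T$-equivariant in the sense $F(Tx) = \tau_{d-1}'(T)F(x)$ where $\tau_{d-1}'(T) = T\times T^2\times\cdots\times T^{d-1}$ shifted appropriately, and on a $\tau$-type orbit the fiber is exactly $L$; continuity plus minimality propagates the equality $F(x) = L_x$ from the dense orbit to all of $X_0$.

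The main obstacle I anticipate is making the second step fully rigorous: bookkeeping the exact exponents so that the $\sigma_d$-action on $N_d(X)$ (which moves the base point of the fiber) interacts correctly with the $\tau_d$-structure that produces $L_x$ as an orbit closure of $x^{(d-1)}$ under $T\times T^2\times\cdots\times T^{d-1}$, rather than some conjugated version. In particular one must check that the "extra" first-coordinate translation introduced by $\sigma_d^p$ does not enlarge the fiber beyond $L_x$; this is where continuity of $F$ on $X_0$ is essential, because a priori $N_d[x]$ could be strictly larger than $\{x\}\times L_x$ for bad $x$ (indeed it generally is). I would handle this by noting that for $x$ in the dense orbit of a fixed diagonal point, $N_d[x]$ visibly equals $\{x\}\times L_x$ (direct computation from the generating set of $\G_d$), and then using upper semi-continuity to get $N_d[x]\subset \{x\}\times L_x$ won't follow directly — instead lower semi-continuity at $x\in X_0$ combined with the fact that the function $x\mapsto \{x\}\times L_x$ is itself only lower semi-continuous in general forces the two to agree exactly where $F$ is continuous. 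So the heart of the argument is the interplay of the two one-sided semicontinuities pinned down on the common $G_\delta$ set of continuity points.
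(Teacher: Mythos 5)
Your setup is fine as far as it goes: the fiber map $F(x)=\{(x_2,\dots,x_d):(x,x_2,\dots,x_d)\in N_d(X)\}$ is upper semi-continuous because $N_d(X)$ is closed, Fort's theorem gives a dense $G_\delta$ of continuity points, the inclusion $\{x\}\times L_x\subset N_d[x]$ holds for every $x$ (take $p=-q$ in the generating set of $N_d$), and your observation that every point of $N_d(X)$ is a limit of points $\sigma_d^{p_i}\tau_d^{q_i}z^{(d)}\in\{z_i\}\times L_{z_i}$ with $z_i=T^{p_i+q_i}z$ is exactly the density statement one needs. The gap is in how you close the argument: you pin the genericity on the continuity points of $F$, but continuity of $F$ at $x$ does not yield $F(x)\subset L_x$. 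From $(x,y_2,\dots,y_d)=\lim_i(z_i,w_i)$ with $w_i\in L_{z_i}$ you need $\limsup_i L_{z_i}\subset L_x$, i.e.\ \emph{upper} semi-continuity at $x$ of the map $\Phi:x\mapsto \{x\}\times L_x$ --- a different map from $F$; semi-continuity of $F$ at $x$ only returns the tautology $(y_2,\dots,y_d)\in F(x)$. Worse, at any point where $\Phi$ fails to be upper semi-continuous the conclusion genuinely fails: if $z_i\to x$, $w_i\in L_{z_i}$, $w_i\to w\notin L_x$, then $(x,w)\in N_d(X)$ by closedness, so $N_d[x]\supsetneq\{x\}\times L_x$. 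Hence the set of good points is precisely the continuity set of $\Phi$, and your set (the continuity set of $F$) works only if you separately prove it is contained in that of $\Phi$, which you do not.

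The auxiliary claim you lean on to bridge this --- that $N_d[x]$ ``visibly equals'' $\{x\}\times L_x$ for $x$ on the orbit of a fixed diagonal point, by direct computation from the generators of $\G_d$ --- is unjustified and is essentially the content of the lemma: $N_d[x]$ consists of \emph{limits} of $\G_d$-translates of diagonal points whose first coordinate merely converges to $x$, not of translates whose first coordinate equals $x$, so no direct computation identifies the fiber even over orbit points. The repair is the paper's route: show that $\Phi$ is lower semi-continuous (a diagonal argument from the definition of $L_x$ as an orbit closure), let $X_0$ be its continuity points (a dense $G_\delta$), and at $x_0\in X_0$ combine upper semi-continuity of $\Phi$ at $x_0$ with the density of $\bigcup_{x\in X}\{x\}\times L_x$ in $N_d(X)$: if some $w\in N_d[x_0]\setminus(\{x_0\}\times L_{x_0})$ existed, choose an open $U$ with $\{x_0\}\times L_{x_0}\subset U$ and $w\notin\overline{U}$; then $\{x:\{x\}\times L_x\subset U\}$ contains a neighborhood $V$ of $x_0$, and the open set of points of $X^d$ whose first coordinate lies in $V$ and which avoid $\overline{U}$ is a neighborhood of $w$ meeting $N_d(X)$ but meeting no $\{x\}\times L_x$, contradicting density.
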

\begin{proof}
Consider $\Phi:X\to 2^{X^d}$ given by $x\mapsto \{x\}\times L_x$.
It is easy to check that this map is lower-semi-continuous (see \cite[Lemma 6.1, Claim 1]{GHSWY}).  It is well known that the set of continuity points of $\Phi$ is a dense $G_{\delta}$ subset
$X_{0}\subset X$. We now show $\cup_{x\in X}\{x\}\times L_x$ is dense in $N_d(X)$.
To see this let $(y_1,\ldots,y_d)\in N_d$. Then there are subsequences $\{p_i\}$ and $\{q_i\}$ of $\Z$ and $x\in X$
such that $T^{p_i}x\lra y_1, T^{p_i+q_i}x\lra y_2, \ldots, T^{p_i+(d-1)q_i}x\lra y_d$.
Let $z_i=T^{p_i}x$, we have $z_i\lra y_1, T^{q_i}z_i\lra y_2, \ldots, T^{(d-1)q_i}z_i\lra y_d$.
It is clear that $(z_i, T^{q_i}z_i, \ldots, T^{(d-1)q_i}z_i)\in \{z_i\}\times L_{z_i}$.

Thus, it follows that at each point $x\in X_{0}$ we must have $N_d[x]=\{x\}\times L_x$.
Indeed let $x_{0}\in X_{0}$ and assume $N_d[x]\not=\{x\}\times L_x$, i.e. $\{x\}\times L_x\varsubsetneq N_d[x]$.
Let $U$ be an open set in $N_d(X)$ so that
$$\{x_0\}\times L_{x_0}\subset N_d(X)\cap U\neq N_d[x].$$
As $\Phi$ is  continuous at $x_{0}$
the set $\{x\in X|\: \{x\}\times L_x\subset U\}$
is a neighborhood of $x_{0}$ and it follows $\cup_{x\in X}\{x\}\times L_x$ is not dense in $N_d(X)$.
\end{proof}

To sum up we have
\begin{thm}\label{label53}Let $\pi: (X,T)\rightarrow (Y,T)$ be an extension of minimal systems and $d\in \N$.
\begin{enumerate}
\item If $Y$ is a $d$-step topological characteristic factor of $X$ and $\pi$ is open,
then $N_{d+1}(X)$ is $\pi^{(d+1)}$-saturated, i.e.
$(\pi^{(d+1)})^{-1}(N_{d+1}(Y))=N_{d+1}(X).$

\item If $N_{d+1}(X)$ is $\pi^{(d+1)}$-saturated, then $Y$ is a $d$-step topological characteristic factor of $X$.
\end{enumerate}
\end{thm}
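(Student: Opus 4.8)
The plan is to observe that statement (1) is exactly Lemma \ref{lem-saturated} (i.e. \cite[Lemma 3.3]{GHSWY}), so nothing new has to be done for that half, and to concentrate on the genuinely new direction (2). The strategy for (2) is to combine the hypothesis that $N_{d+1}(X)$ is $\pi^{(d+1)}$-saturated with the generic description of the fibres of $N_{d+1}(X)$ over $X$ furnished by Lemma \ref{thm:genericity}. Roughly: for a generic $x$, the fibre $N_{d+1}[x]$ equals $\{x\}\times L_x$, so a point of $X^{d}$ lying in $(\pi^{(d)})^{-1}(\pi^{(d)}(L_x))$ can be ``prepended'' with the coordinate $x$ to produce a point whose $\pi^{(d+1)}$-image lies in $N_{d+1}(Y)$; saturation then pulls it back into $N_{d+1}(X)$, and since its first coordinate is $x$, it must lie in $N_{d+1}[x]=\{x\}\times L_x$, i.e. the original point was in $L_x$.

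In detail, I would first apply Lemma \ref{thm:genericity} with $d$ replaced by $d+1$: this gives a dense $G_{\delta}$ subset $X_{0}\subset X$ such that for every $x\in X_{0}$ one has $N_{d+1}[x]=\{x\}\times L_x$, where $L_x=\overline{\O}(x^{(d)},\tau_d(T))$ is precisely the set occurring in the definition of a $d$-step topological characteristic factor (the orbit closure of $x^{(d)}$ under $T\times T^2\times\cdots\times T^{d}$). Put $\Omega=X_{0}$; I claim each $L_x$ with $x\in\Omega$ is $\pi^{(d)}$-saturated. Fix $x\in X_0$ and $(x_1,\ldots,x_d)\in(\pi^{(d)})^{-1}(\pi^{(d)}(L_x))$, and pick $(x_1',\ldots,x_d')\in L_x$ with $\pi(x_i)=\pi(x_i')$ for all $i$. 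Since $x\in X_0$, the point $(x,x_1',\ldots,x_d')$ lies in $\{x\}\times L_x=N_{d+1}[x]\subset N_{d+1}(X)$, hence $\pi^{(d+1)}(x,x_1',\ldots,x_d')\in\pi^{(d+1)}(N_{d+1}(X))=N_{d+1}(Y)$. Because $\pi^{(d+1)}(x,x_1,\ldots,x_d)=\pi^{(d+1)}(x,x_1',\ldots,x_d')$, the point $\pi^{(d+1)}(x,x_1,\ldots,x_d)$ lies in $N_{d+1}(Y)$, and the saturation hypothesis $(\pi^{(d+1)})^{-1}(N_{d+1}(Y))=N_{d+1}(X)$ forces $(x,x_1,\ldots,x_d)\in N_{d+1}(X)$; as its first coordinate is $x$, it lies in $N_{d+1}[x]=\{x\}\times L_x$, so $(x_1,\ldots,x_d)\in L_x$. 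The opposite inclusion is trivial, so $L_x$ is $\pi^{(d)}$-saturated and $Y$ is a $d$-step topological characteristic factor of $X$.

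There is no serious obstacle here; the points requiring care are purely bookkeeping. First, one must match the index shift between $N_{d+1}$ and $L_x$ so that Lemma \ref{thm:genericity} applied with parameter $d+1$ yields exactly the $\tau_d(T)$-orbit closure appearing in the definition of a $d$-step characteristic factor. Second, one records the two elementary facts used above: $\{x\}\times L_x\subset N_{d+1}(X)$ for every $x\in X$ (since $(x,T^nx,\ldots,T^{dn}x)=\sigma_{d+1}^{-n}\tau_{d+1}^{n}(x^{(d+1)})$, so the nontrivial containment in Lemma \ref{thm:genericity} is $N_{d+1}[x]\subset\{x\}\times L_x$), and $\pi^{(d+1)}(N_{d+1}(X))=N_{d+1}(Y)$ (immediate from the fact that $\pi^{(d+1)}$ is a factor map sending $\Delta_{d+1}(X)$ onto $\Delta_{d+1}(Y)$ and intertwining the $\G_{d+1}$-actions). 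It is worth noting that, in contrast with (1), no openness of $\pi$ is needed for (2); conceptually the theorem says that $N_{d+1}(X)$ being $\pi^{(d+1)}$-saturated is equivalent to the generic $L_x$ being $\pi^{(d)}$-saturated.
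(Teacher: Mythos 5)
Your proposal is correct and follows essentially the same route as the paper: part (1) is quoted from Lemma \ref{lem-saturated}, and part (2) combines the generic fibre description $N_{d+1}[x]=\{x\}\times L_x$ from Lemma \ref{thm:genericity} with the saturation hypothesis to pull a point $(x,x_1,\ldots,x_d)$ back into $N_{d+1}[x]$ and hence into $L_x$. Your write-up is in fact a bit more careful than the paper's about the index shift (applying the genericity lemma with parameter $d+1$) and about spelling out $\pi^{(d+1)}(N_{d+1}(X))=N_{d+1}(Y)$, but the underlying argument is identical.
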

\begin{proof} (1) follows from Lemma \ref{lem-saturated}.

To show (2) we use Lemma \ref{thm:genericity}.
Indeed, by Lemma \ref{thm:genericity} we know that there is a dense $G_\delta$ subset
$X_{0}\subset X$ such that for $x\in X_0$ one has $N_d[x]=\{x\}\times L_x$. Now let $(x_1,\ldots,x_{d-1})\in L_x$,
then we have $(x,x_1,\ldots,x_{d-1})\in N_d(X)$. For any $(y_1,\ldots,y_{d-1})\in X^{(d-1)}$
with $\pi(x_i)=y_i$, $1\le i\le d-1$, we have $(x, y_1,\ldots,y_{d-1})\in N_d(X)$.
Thus, $(x, y_1,\ldots,y_{d-1})\in N_d[x]=\{x\}\times L_x$, i.e. $(y_1,\ldots,y_{d-1})\in L_x$.
\end{proof}

\subsection{Proof of Theorem B}

First we need a simple lemma.

\begin{lem}\label{distrel}
Let $\pi:X\lra Y$ be an extension between minimal systems.  If $R_{\pi}\subset S_{dis}(X)$, then $X_{dis}=Y_{dis}$. Particularly, this holds for proximal extensions.
\end{lem}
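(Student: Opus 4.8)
The plan is to exhibit $X_{dis}$ and $Y_{dis}$ as factors of one another through maps that are forced to be mutually inverse. Write $p_X\colon X\to X_{dis}$ and $p_Y\colon Y\to Y_{dis}$ for the canonical factor maps onto the maximal distal factors, so that $R_{p_X}=S_{dis}(X)$ and $R_{p_Y}=S_{dis}(Y)$. First I would use the hypothesis: since $R_{\pi}\subset S_{dis}(X)=R_{p_X}$, the factor map $p_X$ is constant on the fibres of $\pi$ and hence descends to a factor map $q\colon Y\to X_{dis}$ with $q\circ\pi=p_X$. As $X_{dis}$ is distal, the universal property of the maximal distal factor of $Y$ yields a unique factor map $s\colon Y_{dis}\to X_{dis}$ with $s\circ p_Y=q$. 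Symmetrically, $p_Y\circ\pi\colon X\to Y_{dis}$ is a factor map onto a distal system, so it factors through $p_X$, giving a unique factor map $r\colon X_{dis}\to Y_{dis}$ with $r\circ p_X=p_Y\circ\pi$.

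Next I would check that $r$ and $s$ are mutually inverse, using only the surjectivity of $\pi$, $p_X$, $p_Y$. From $(s\circ r)\circ p_X=s\circ p_Y\circ\pi=q\circ\pi=p_X$ and surjectivity of $p_X$ one gets $s\circ r=\mathrm{id}_{X_{dis}}$; from $(r\circ q)\circ\pi=r\circ p_X=p_Y\circ\pi$ and surjectivity of $\pi$ one gets $r\circ q=p_Y$, and then $(r\circ s)\circ p_Y=r\circ q=p_Y$ together with surjectivity of $p_Y$ gives $r\circ s=\mathrm{id}_{Y_{dis}}$. Hence $r$ is an isomorphism of systems, i.e. $X_{dis}=Y_{dis}$. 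For the final assertion, if $\pi$ is a proximal extension then $R_{\pi}\subset{\bf P}(X)\subset S_{dis}(X)$, the second inclusion holding because $S_{dis}(X)$ is the smallest closed invariant equivalence relation containing ${\bf P}(X)$; thus the hypothesis applies and the conclusion follows.

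I do not expect a genuine obstacle here — the argument is essentially diagram chasing — so the only point requiring care is that the composites $s\circ r$ and $r\circ s$ really are the identity, which is exactly where the surjectivity of the quotient maps and the uniqueness clause in the universal property of the maximal distal factor are used. A variant that avoids the universal property is to argue directly on relations: combining $R_{\pi}\subset S_{dis}(X)$ with Lemma~\ref{liftofdis} (which gives $(\pi\times\pi)(S_{dis}(X))=S_{dis}(Y)$) and the transitivity of the equivalence relation $S_{dis}(X)$, one shows $(\pi\times\pi)^{-1}(S_{dis}(Y))=S_{dis}(X)$; then $p_Y\circ\pi$ collapses exactly the $S_{dis}(X)$-classes and so induces a continuous bijection $X_{dis}\to Y_{dis}$ commuting with the action, which is an isomorphism since the spaces are compact metric.
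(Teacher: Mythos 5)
Your proof is correct and follows essentially the same route as the paper: both arguments exhibit $X_{dis}$ and $Y_{dis}$ as factors of one another via the maximality of the respective distal factors, and conclude they coincide. You additionally verify that the two induced maps are mutually inverse (a point the paper leaves implicit when passing from ``each is a factor of the other'' to equality), which is a worthwhile bit of extra care but not a different method.
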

\begin{proof}
It is clear that $Y_{dis}$ is a distal factor of $X$. By the maximality of $X_{dis}$, $Y_{dis}$ is a factor of $X_{dis}$.  If $R_{\pi}\subset S_{dis}(X)$, then $X_{dis}=X/S_{dis}(X)$ is a factor of $Y=X/R_{\pi}$.  Since $X_{dis}$ is distal, we have that $X_{dis}$ is also a factor of $Y_{dis}$. Thus $X_{dis}=Y_{dis}$.

When $\pi$ is proximal we have $R_{\pi}\subset {\bf P}(X)\subset S_{dis}(X)$. Thus the conclusion holds for proximal extensions.
\end{proof}

The following lemma is direct from the definition.
\begin{lem}\label{distfactor}
Let $\pi:X\lra Y$, $Z$ is a factor of $Y$ and distal. If the maximal distal factor of $X$ is $Z$,
then $Z$ is also the maximal distal factor of $Y$.
\end{lem}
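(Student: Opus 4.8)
The plan is to invoke the universal property that characterizes the maximal distal factor: a factor $D$ of a system $M$ is its maximal distal factor exactly when $D$ is distal and every distal factor of $M$ factors through $D$ (indeed $M_{dis}=M/S_{dis}(M)$ with $S_{dis}(M)$ the smallest closed invariant equivalence relation having distal quotient, so any distal factor $M/R$ satisfies $R\supseteq S_{dis}(M)$ and hence factors through $M_{dis}$). Since $Z$ is assumed to be a distal factor of $Y$, it remains only to verify this universal property for $Z$ with respect to $Y$, i.e. that an arbitrary distal factor of $Y$ is a factor of $Z$.

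So I would take any factor map $\psi : Y \lra W$ with $W$ distal. Composing with $\pi : X \lra Y$ yields a factor map $\psi\circ\pi : X \lra W$ onto a distal system, so $W$ is a distal factor of $X$. By hypothesis $Z$ is the maximal distal factor of $X$, hence $W$ is a factor of $Z$. This is the entire content of the lemma once "maximal distal factor" is read through its universal property.

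To pin $Z$ down as literally $Y_{dis}$: applying the previous step to $W=Y_{dis}$ shows $Y_{dis}$ is a factor of $Z$, while maximality of $Y_{dis}$ gives that $Z$ is a factor of $Y_{dis}$; since distal minimal systems are coalescent, two distal minimal systems each of which is a factor of the other must be isomorphic, whence $Z=Y_{dis}$. I expect no genuine obstacle here; the only external input is the coalescence of distal minimal systems, and it is needed only for this on-the-nose identification, not for the universal property itself.
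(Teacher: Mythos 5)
Your proof is correct and is exactly the fleshing-out of the paper's justification, which dismisses the lemma as ``direct from the definition'': any distal factor $W$ of $Y$ becomes a distal factor of $X$ via $\pi$, hence factors through $Z=X_{dis}$, and since $\pi$ is surjective this factorization descends to $Y$, giving the universal property for $Z$ over $Y$. One small remark: the appeal to coalescence of distal minimal systems in your last step is not needed, since the two factor maps $Z\ra Y_{dis}$ and $Y_{dis}\ra Z$ you produce both commute with the projections from $Y$, so their composites fix the (dense, indeed full) image of $Y$ and are therefore the identity, yielding the isomorphism directly.
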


\begin{proof}[Proof of Theorem B]
Let $\pi: X\lra X_{dis}$ be the factor map. Then there are minimal systems $X^{*}$ and $Y$ such that $\phi: X^{*}\lra X$ and $\psi: Y\lra X_{dis}$ are almost 1-1 extension, $\pi^{*}: X^{*}\lra Y$ is open and the following commutative diagram holds:
\begin{equation}\label{diagram1}
\begin{CD}
X @<{\phi}<< X^*\\
@VV{\pi}V      @VV{\pi^*}V\\
X_{dis} @<{\psi}<< Y
\end{CD}
\end{equation}
By Lemma \ref{distrel}, we have $X_{dis}=(X^{*})_{dis}$. Let $\tau= \pi\circ\phi: X^{*}\lra X_{dis}$.
Then (\ref{diagram1}) naturally induces the following diagram.
$$\xymatrix@R=1.6cm{
		N_{d}(X)  \ar[d]_{\pi^{(d)}}    &  N_{d}(X^*) \ar[d]^{(\pi^*)^{(d)}}\ar[l]_{\phi^{(d)}} \ar[dl]_{\tau^{(d)}}       \\
		 N_{d}(X_{dis})  &   N_{d}(Y) \ar[l]_{\psi^{(d)}}    \\
		               }
	$$

\medskip
\noindent {\bf Claim.} $R_{\tau^{(d)}}\subset S_{dis}(N_{d}(X^{*}))$.

\medskip

If the Claim holds, then by Lemma \ref{distrel} we have $(N_{d}(X^{*}))_{dis}=N_{d}(X_{dis})$. It follows from Lemma \ref{distfactor}   that the maximal distal factor of $N_{d}(X)$ is $N_{d}(X_{dis})$.

\medskip
Now we are going to prove the claim. Fix $\left((x_i)_{1}^{d}, (x'_i)_{1}^{d}\right)\in R_{\tau^{(d)}}$.
Then $(x_i,x'_i)\in S_{dis}(X^{*})$ for each $1\leq i\leq d$.  Let $y_i=\pi^{*}(x_i)$ and $y'_i=\pi^{*}(x'_i)$
for each $1\leq i\leq d$. By Lemma \ref{liftofdis}, we have $(y_i,y'_i)\in S_{dis}(Y)$ for
each $1\leq i\leq d$.  Since $\psi$ is almost 1-1 and $X_{dis}$ is distal, we have $R_{\psi}={\bf P}(Y)=S_{dis}(Y)$.
By Lemma \ref{liftproximal}, we have $R_{\psi^{(d)}}=S_{dis}(N_{d}(Y))={\bf P}(N_{d}(Y))$. Thus there is a
sequence $(g_n)\in \G_{d}$ such that $g_{n}\left(\left((y_i)_{1}^{d}, (y'_i)_{1}^{d}\right)\right)\lra \Delta(N_d(Y))$. Then $\overline{\O}\left(\left((x_i)_{1}^{d}, (x'_i)_{1}^{d}\right)\right)\cap R_{(\pi^{*})^{(d)}}\neq\emptyset$.

Since $X_\infty$ is a factor of $X_{dis}$, it follows from Theorems \ref{lab55} and \ref{label53}
that $N_d(X^{*})$ is $(\pi^{*})^{(d)}$-saturated. Let $\left((z_i)_{1}^{d}, (z'_i)_{1}^{d}\right)\in \overline{\O}\left(\left((x_i)_{1}^{d}, (x'_i)_{1}^{d}\right)\right)\cap R_{(\pi^{*})^{(d)}}$. It follows from the saturatedness of $(\pi^{*})^{(d)}$ that
\[(z_1,\cdots ,z_i,z'_{i+1},\cdots,z'_{d})\in N_{d}(X^{*}),   \]
for each $0\leq i\leq d$.

First consider the points $((z_1,\ldots,z_d),(z'_1,z_2,\ldots,z_d))\in (N_d(X^*))^2$. Since $\pi^*(z_1)=\pi^*(z'_1)$, we have $\tau(z_1)=\tau(z'_1)$ and hence $(z_1,z'_1)\in S_{dis}(X^*)$. It follows from Theorem \ref{JG} that
	$$\overline{\O}((z_1,z'_1),T)\cap \overline{\E(\overline{{\bf P}(X^*)})}\ne\emptyset$$
	which implies
	$$\overline{\O}(((z_1,\ldots,z_d),(z'_1,z_2,\ldots,z_d)),\G_d)\cap \overline{\E(\overline{{\bf P}(N_d(X^*))})}\ne\emptyset.$$
Thus
$$((z_1,\ldots,z_d),(z'_1,z_2,\ldots,z_d))\in S_{dis}(N_d(X^*)).$$
Next consider $((z'_1,z_2,\ldots,z_d),(z'_1,z'_2,z_3\ldots,z_d))\in (N_d(X^*))^2$ in the same way.
Inductively, for any $i=2,3,\ldots,d$, we have
	$$((z'_1,\ldots,z'_{i-1},z_i,\ldots,z_d),(z'_1,\ldots,z'_{i-1},z'_i,z_{i+1},\ldots,z_d))\in S_{dis}(N_d(X^*)).$$
Since $S_{dis}(N_d(X^*))$ is an invariant closed equivalence relation, it follows that
	$$((z_1,\ldots,z_d),(z'_1,\ldots,z'_d))\in S_{dis}(N_d(X^*)).$$
Thus $$\overline{\O}( ((z_1,\ldots,z_d),(z'_1,\ldots,z'_d)) )\cap \overline{\E(\overline{{\bf P}(N_d(X^*))})}\neq\emptyset.$$
 Recall that $\left((z_i)_{1}^{d}, (z'_i)_{1}^{d}\right)\in \overline{\O}\left(\left((x_i)_{1}^{d}, (x'_i)_{1}^{d}\right)\right)$. So we have $$\overline{\O}\left(\left((x_i)_{1}^{d}, (x'_i)_{1}^{d}\right)\right)\cap \overline{\E(\overline{{\bf P}(N_d(X^*))})}\neq\emptyset.$$ Hence $\left((x_i)_{1}^{d}, (x'_i)_{1}^{d}\right)\in S_{dis}(N_{d}(X^*))$. This proves the claim.
\end{proof}

\section{Extensions and the structure theorems}

In this section, we will show that minimal t.d.s. $(X,T)$ and $(N_d(X),\G_d)$ have the same structure theorems.

\subsection{Structure theorem for minimal systems}

 We say that a minimal system $(X,G)$ is a {\it strictly } {\bf PI} {\it system} if there is a countable ordinal $\eta$ and a family of minimal systems $(X_\lambda,G)$ $(0\leq\lambda\leq\eta)$ such that
    \begin{enumerate}
    \item $X_0$ is a singleton;
    \item for every successor ordinal $\lambda<\eta$, there exists an extension $\phi_\lambda: (X_{\lambda+1},G)\rightarrow (X_\lambda,G)$ which is either proximal or equicontinuous;
    \item for a limit ordinal $\lambda<\eta$ the system $(X_\lambda,G)$ is the inverse limit of the systems
    $(X_\mu,G)_{\mu<\lambda}$;
    \item $X_\eta=X$.
    \end{enumerate}

    A minimal system $(X,G)$ is a  {\bf PI} {\it system} if there exists a strictly {\bf PI} system $(\tilde{X},G)$ and a proximal extension $\pi: (\tilde{X},G)\ra (X,G)$.  If we replace the proximal extensions by almost $1-1$ extensions in the definition of strictly {\bf PI} system, the resulting system is called an {\bf HPI} system. If we replace the proximal extensions by trivial extensions, the resulting system is called an {\bf I} system.

\begin{thm}[Structure Theorem of Minimal Distal Systems]
Every minimal distal system is an {\bf I} system.
\end{thm}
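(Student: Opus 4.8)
The plan is to establish Furstenberg's original theorem \cite{Fur63} by transfinite induction: for a minimal distal system $(X,G)$ we build a tower of \emph{factors} $(X_\lambda,G)_{0\le\lambda\le\eta}$ witnessing that $X$ is an \textbf{I} system. Let $X_0$ be the one--point system. Given a factor map $\pi_\lambda\colon X\to X_\lambda$, if $\pi_\lambda$ is an isomorphism put $\eta=\lambda$ and stop; at a limit ordinal $\lambda$ let $X_\lambda$ be the inverse limit of $(X_\mu)_{\mu<\lambda}$, again a factor of $X$ with $R_{\pi_\lambda}=\bigcap_{\mu<\lambda}R_{\pi_\mu}$. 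The whole argument rests on the successor step, for which we isolate the following.

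\medskip
\noindent\textbf{Key Lemma.} \emph{If $\pi\colon X\to Y$ is a distal extension of minimal systems which is not an isomorphism, then there is an intermediate factor $X\to Z\xrightarrow{\phi}Y$ with $\phi$ equicontinuous and $Z\ne Y$.}
\medskip

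Granting this: since $X$ is distal, every extension $\pi_\lambda$ is distal (from ${\bf P}(X)=\Delta(X)$ we get ${\bf P}(X)\cap R_{\pi_\lambda}=\Delta(X)$), so whenever $X_\lambda\ne X$ the Key Lemma applied to $\pi_\lambda$ furnishes $X_{\lambda+1}:=Z$ together with an equicontinuous factor map $\phi_\lambda\colon X_{\lambda+1}\to X_\lambda$, giving clause (2) in the definition of a strictly \textbf{I} system, and moreover $R_{\pi_{\lambda+1}}\subsetneq R_{\pi_\lambda}$. Hence $(R_{\pi_\lambda})_\lambda$ is a transfinite chain of closed subsets of $X\times X$ that strictly decreases at every successor; since $X\times X$ is second countable, such a chain has countable length (assign to each successor step a distinct basic open set), so the construction stops at a countable ordinal $\eta$, necessarily with $X_\eta=X$. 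This displays $(X,G)$ as an \textbf{I} system.

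\medskip

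It remains to prove the Key Lemma, which carries all the weight. Put $R_\pi=\{(x_1,x_2):\pi(x_1)=\pi(x_2)\}$ and let $F$ be the cone of continuous $G$--invariant maps $f\colon R_\pi\to[0,\infty)$ that restrict to a pseudometric on each fibre of $\pi$. As $C(R_\pi)$ is separable we may pick a countable $F$--dense family $(f_n)$; then $R_0:=\bigcap_n\{f_n=0\}$ is a closed $G$--invariant equivalence relation inside $R_\pi$, and $f^\ast:=\sum_n 2^{-n}f_n/(1+\|f_n\|_\infty)$ descends to a continuous $G$--invariant family of metrics on the fibres of $X/R_0\to Y$, so $X/R_0\to Y$ is an equicontinuous extension. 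Thus the lemma reduces to the single point that $R_0\subsetneq R_\pi$, equivalently that $F$ contains a map not vanishing identically on $R_\pi\setminus\Delta(X)$. This is Furstenberg's averaging lemma: starting from the restriction to $R_\pi$ of the metric of $X$ — a continuous fibrewise metric that is \emph{not} $G$--invariant — one averages it over orbits using an invariant probability measure of $(X,G)$ (which a minimal distal system carries), together with its disintegration over $Y$, distality of $\pi$ being exactly what keeps the average from collapsing off the diagonal. I expect this averaging step to be the main obstacle; it is carried out in \cite{Fur63} (see also \cite{Veech}), and the remainder of the argument above is bookkeeping with closed invariant equivalence relations.
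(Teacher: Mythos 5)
The paper does not actually prove this statement; it is Furstenberg's classical theorem, quoted with the citation \cite{Fur63}, so your proposal has to be judged on its own merits. Your global architecture is the standard one and is sound: the transfinite tower of factors, the observation that every factor map out of a distal system is a distal extension, the countability of a strictly decreasing transfinite chain of closed subsets of the second countable space $X\times X$, the handling of limit ordinals by inverse limits, and the reduction of everything to the Key Lemma are all correct bookkeeping.

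The genuine gap sits inside the Key Lemma, exactly where you suspect it, and it is worth being precise about why the averaging sketch would fail as written. Your construction of $Z$ needs an element of $F$, i.e.\ a \emph{continuous} $G$-invariant fibrewise pseudometric, not identically zero on $R_\pi\setminus\Delta(X)$. Orbit-averaging the ambient metric (against an invariant measure, or along a subsequence or ultrafilter of ergodic averages) does produce a $G$-invariant fibrewise pseudometric which, by distality of $\pi$, is bounded below by $\inf_{g}\rho(gx_1,gx_2)>0$ off the diagonal; but nothing forces the resulting function to be continuous --- it is in general only a Baire-class function, and a discontinuous invariant fibrewise metric yields neither a closed relation $R_0$ nor an equicontinuous extension $Z\to Y$. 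Restoring continuity is the actual content of Furstenberg's argument (which proceeds via almost periodic functions and the enveloping semigroup rather than naive averaging), so the step you defer to \cite{Fur63} is not a routine verification but the theorem's core. A second, smaller circularity to flag: for a general acting group $G$ the existence of an invariant probability measure on a minimal distal system is itself classically deduced \emph{from} the structure theorem (equicontinuous extensions and inverse limits preserve the existence of invariant measures), so invoking it at the outset is only legitimate for $\Z$- or amenable actions, where Krylov--Bogolyubov supplies the measure directly.
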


An extension $\phi: (X,G)\ra (Y,G)$ is called a {\it weakly mixing extension} if $R_{\phi}$ under the diagonal action of $G$ is topologically transitive.

An extension $\phi: (X,G)\ra (Y,G)$ is called a {\bf RIC} (relatively incontractible) extension if $\pi$ is an open map and  for any $n\geq 1$ the minimal points are dense in $R_{\phi}^{n}=\{(x_1,\cdots,x_n)\in X^{n}: \phi(x_1)=\cdots=\phi(x_n)\}$. The notion of {\bf RIC} extension was introduced in \cite{EGS75}. Here we use an equivalent definition avoiding extra notions (see \cite[Theorem A.2]{AGHSY10}).

\begin{thm} [Structure Theorem of Minimal Systems] \label{STMS}
Let $(X,T)$ be a minimal system. Then there exist a countable ordinal $\eta$ and a canonically defined commutative diagram of minimal systems(a PI tower):
    $$\xymatrix{
    X=X_0 \ar[dr]^{\sigma_1}\ar[d]_{\pi_0}&  & X_1 \ar[d]_{\pi_1}   \ar[ll]_{\phi_1}  & \ldots X_v \ar[dr]^{\sigma_{v+1}}\ar[d]_{\pi_v} \ar[l] & &X_{v+1} \ldots \ar[ll]_{\phi_{v+1}}\ar[d]_{\pi_{v+1}} & X_\eta \ar[d]_{\pi_\eta} \ar[l] \\
    \{pt\}=Y_0 & Z_1 \ar[l]^{\rho_1} & Y_1   \ar[l]^{\psi_1} & \ldots Y_v \ar[l]& Z_{v+1} \ar[l]^{\rho_{v+1}} &Y_{v+1} \ldots \ar[l]^{\psi_{v+1}} &Y_\eta \ar[l]
    }$$
	where for $v \leq\eta$, $\pi_v$ is {\bf RIC}, $\phi_v$ and $\psi_v$ are proximal, $\rho_v$ are equicontinuous. $\pi_\eta $ is {\bf RIC} and weakly mixing. For a limit ordinal $v,X_v,Y_v,\pi_v,$ ect. are the inverse limits of $X_\lambda,Y_\lambda,\pi_\lambda,$ ect. for $\lambda<v$. $\{pt\}$ denotes the trival system.
\end{thm}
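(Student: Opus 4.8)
\emph{The plan is} to prove the theorem by the algebraic theory of minimal flows, following the approach of Ellis--Glasner--Shapiro, McMahon and Veech; the natural setting is the enveloping semigroup rather than the phase space itself. First I would fix the universal minimal flow $M$ of the acting group, a minimal idempotent $u\in M$, and the group $G=uM$ equipped with its $\tau$-topology. To a pointed minimal system $(X,x_0)$ with $ux_0=x_0$ one attaches its Ellis group $\mathcal{A}(X)=\{\alpha\in G:\alpha x_0=x_0\}$, a $\tau$-closed subgroup of $G$. The whole argument runs on the dictionary between $\tau$-closed subgroups $\mathcal{A}(X)\le \mathcal{H}\le G$ and factors of $X$: proximal extensions preserve the Ellis group, equicontinuous extensions correspond to enlarging the system so that the relevant $\tau$-quotient is a compact group, and a {\bf RIC} extension admits a clean algebraic description (openness together with density of minimal points in the fibred products $R_\pi^{n}$), with weak mixing of a {\bf RIC} extension $\pi$ detected by the relative regionally proximal relation being all of $R_\pi$.

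Next I would isolate the two structural lemmas that drive the recursion. The first is a normalization: every minimal flow has a proximal extension that is {\bf RIC}, and more generally any extension can be made {\bf RIC} after proximal modifications of source and target. This is exactly what forces the horizontal proximal arrows $\phi_v,\psi_v$ into the diagram, since the {\bf RIC} property is not inherited by arbitrary factors; in particular the initial arrow $\phi_1$ already serves to make $\pi_1$ {\bf RIC}. The second, and the heart of the matter, is the dichotomy for {\bf RIC} extensions: if $\pi:(X,G)\to(Y,G)$ is {\bf RIC} and not weakly mixing, then the relative regionally proximal relation $\RP_{\pi}$ is a proper closed invariant equivalence relation contained in $R_\pi$, and $Z=X/\RP_{\pi}$ yields a nontrivial intermediate factor $X\to Z\to Y$ with $Z\to Y$ equicontinuous; conversely, if $\pi$ admits no nontrivial equicontinuous intermediate extension, then $\pi$ is weakly mixing. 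This is precisely the mechanism that peels off one equicontinuous layer $\rho_{v+1}$ at a time.

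With these tools I would build the tower by transfinite recursion. After the normalization lemma is used to arrange that the bottom extension to $\{pt\}$ is {\bf RIC}, at a successor stage $v+1$ I apply the dichotomy to the current {\bf RIC} extension $\pi_v:X_v\to Y_v$. If $\pi_v$ is weakly mixing the recursion stops and this is $\pi_\eta$; otherwise the dichotomy produces the equicontinuous extension $\rho_{v+1}:Z_{v+1}\to Y_v$ together with $\sigma_{v+1}:X_v\to Z_{v+1}$, and I apply the normalization lemma to restore the {\bf RIC} property, introducing proximal extensions $\phi_{v+1}:X_{v+1}\to X_v$ and $\psi_{v+1}:Y_{v+1}\to Z_{v+1}$ so that $\pi_{v+1}:X_{v+1}\to Y_{v+1}$ is again {\bf RIC} and the square commutes. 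At a limit ordinal $v$ one sets $X_v=\varprojlim_{\lambda<v}X_\lambda$ and $Y_v=\varprojlim_{\lambda<v}Y_\lambda$, checking that the {\bf RIC}, proximal and equicontinuous properties all pass to inverse limits. Because each nontrivial step strictly shrinks the Ellis group $\mathcal{A}(Y_v)$ — the proximal steps leave it fixed while the equicontinuous step enlarges $Y_v$ — one obtains a strictly decreasing transfinite chain of $\tau$-closed subgroups of $G$; as $G$ carries only a set of such subgroups, and in the metric case the chain is countable, the recursion terminates at some countable ordinal $\eta$ with $\pi_\eta$ {\bf RIC} and weakly mixing.

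I expect the dichotomy lemma of the second paragraph to be the main obstacle: showing that the failure of weak mixing of a {\bf RIC} extension forces a genuine equicontinuous (isometric) intermediate layer is the technically heaviest part of the Ellis--Glasner--Shapiro/Veech theory, requiring the full strength of the $\tau$-topology to prove that $\RP_{\pi}$ is an equivalence relation with the asserted equicontinuous quotient. The secondary difficulties are the normalization bookkeeping that keeps every vertical arrow {\bf RIC} by interleaving the proximal extensions $\phi_v,\psi_v$ — so that the resulting diagram genuinely commutes and is canonical — and verifying the stability of all three properties under inverse limits, which is what makes the transfinite recursion well defined and guarantees termination at a countable $\eta$.
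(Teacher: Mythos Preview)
The paper does not prove this statement; it is quoted as a classical result due to Ellis--Glasner--Shapiro, McMahon and Veech (references \cite{EGS75}, \cite{Mc78}, \cite{V83} in the paper), and serves as background for the paper's own theorem that $(N_d(X),\G_d)$ inherits the same structure tower. Your outline is a faithful sketch of the standard proof from that literature, so in that sense it is ``the same approach'' --- there is nothing to compare.

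One caution on the termination argument: your claim that each nontrivial equicontinuous step strictly shrinks the Ellis group $\mathcal{A}(Y_v)$ is the right idea, but the countability of $\eta$ in the metric case is usually obtained not from abstract cardinality of chains of $\tau$-closed subgroups but from separability: each genuinely new equicontinuous layer strictly enlarges the algebra of continuous functions (or, on the Ellis-group side, produces a strictly smaller $\tau$-closed subgroup in a separable group), and a strictly increasing chain of closed subalgebras of $C(X)$ with $X$ metric must be countable. As written, ``$G$ carries only a set of such subgroups'' gives termination at some ordinal but not countability; you would need to invoke metrizability explicitly at that point.
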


 Then it naturally induces the following commutative diagram associated to $N_d(X)$:
 \begin{equation}\label{eq4.1}
 \xymatrix{
    	N_d(X) \ar[d]_{\pi_0^{(d)}}  & \ldots N_d(X_v) \ar[dr]^{\sigma_{v+1}^{(d)}}\ar[d]_{\pi_v^{(d)}}\ar[l] & &N_d(X_{v+1}) \ldots \ar[ll]_{\phi_{v+1}^{(d)}}\ar[d]_{\pi_{v+1}^{(d)}} & N_d(X_\eta) \ar[d]_{\pi_\eta^{(d)}} \ar[l] \\
    	N_d(Y_0) & \ldots N_d(Y_v) \ar[l]& N_d(Z_{v+1}) \ar[l]^{\rho_v^{(d)}} &N_d(Y_{v+1}) \ldots \ar[l]^{\psi_{v+1}^{(d)}} &N_d(Y_\eta)  \ar[l]
    }
   \end{equation}

 Our main aim in this section is to show the following theorem which will imply Theorem C.

 \begin{thm}
 Let $(X,T)$ be a minimal system. Then $(N_d(X),\G_d)$ has the same structure theorem as $(X,T)$. Precisely,
 in the commutative diagram \ref{eq4.1},  $v\leq\eta$, $\pi_v^{(d)}$ is {\bf RIC}, $\phi_v^{(d)}$
 and $\psi_v^{(d)}$ are proximal, $\rho_v^{(d)}$ are equicontinuous. $\pi_\eta^{(d)}$ is {\bf RIC} and weakly mixing.
 \end{thm}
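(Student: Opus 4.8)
The plan is to transfer each structural property of the PI tower for $(X,T)$ through the functor $Z \mapsto N_d(Z)$, one arrow at a time. The main tool is Lemma~\ref{appi-Nd} (the density-of-minimal-points transfer principle): if a tuple of points of $N_d(Z)$ is minimal under the diagonal $\Z$-action, then it is minimal under the diagonal $\G_d$-action. I would proceed property by property along the diagram \eqref{eq4.1}, treating the successor steps first and then handling limit ordinals by an inverse-limit argument.

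First I would handle the proximal extensions $\phi_v, \psi_v$. Given a proximal extension $\phi: (A,T)\to(B,T)$, Lemma~\ref{extension for product} (applied $d$ times, with the slight extra care that the factors involve different powers of $T$, which does not affect proximality since $\mathbf P(A,T^k)=\mathbf P(A,T)$ by Lemma~\ref{liftproximal}(1)) shows $\phi^{(d)}: (A^d,T)\to(B^d,T)$ is proximal as a $\Z$-extension, hence its restriction $\phi^{(d)}: (N_d(A),\G_d)\to(N_d(B),\G_d)$ has $R_{\phi^{(d)}}\subset\mathbf P(N_d(A),\G_d)$ once we check a proximal $\Z$-pair in $N_d(A)$ remains proximal under $\G_d$: this is immediate since $\langle\tau_d\rangle$ only adds more group elements along which to contract. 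For the equicontinuous extensions $\rho_v$, equicontinuity of $\rho^{(d)}$ as a $\Z$-extension of $A^d$ is trivial from Lemma~\ref{extension for product}; restricting the acting group to $\G_d\supsetneq\langle T^{(d)}\rangle$ preserves equicontinuity only if we verify the $\tau_d$-direction is also equicontinuous on $R_{\rho^{(d)}}$—this follows because on fibres of $\rho$ the extension is equicontinuous for $T$ hence for every power $T^k$, and $\tau_d$ acts coordinatewise by such powers. The RIC property of $\pi_v^{(d)}$ is where Lemma~\ref{appi-Nd} is essential: openness of $\pi_v^{(d)}$ follows from openness of $\pi_v$ (open maps are stable under products and under restriction to a closed invariant subset that surjects, here $N_d$), and density of minimal points in $(R_{\pi_v^{(d)}})^n$ is exactly the statement that a $\G_d$-minimal point appears in every fibre-product tuple—which we get by taking a $\Z$-minimal point of $R_{\pi_v}^{nd}$ (dense by RIC of $\pi_v$, arranged to lie in $N_d$) and applying Lemma~\ref{appi-Nd} to promote it to a $\G_d$-minimal point.

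For the top map $\pi_\eta^{(d)}$ I would show it is RIC by the same argument and weakly mixing by noting $R_{\pi_\eta^{(d)}}$ restricted to $N_d$ is topologically transitive: transitivity of $R_{\pi_\eta}$ as a $\Z$-system gives a transitive point, and $\G_d\supset\langle T^{(d)}\rangle$ can only enlarge orbit closures, so the image of that point in the appropriate $N_d$-type subset of $R_{\pi_\eta^{(d)}}$ is still transitive. Limit ordinals are routine: $N_d(\varprojlim X_\lambda)=\varprojlim N_d(X_\lambda)$ because $N_d$ is an orbit closure of the diagonal and commutes with inverse limits of the factor maps, so the tower property passes to the limit. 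The main obstacle I expect is the RIC/weak-mixing step, specifically verifying openness of $\pi_v^{(d)}$ and arranging the $\Z$-minimal points of the iterated fibre product to actually sit inside $N_d(X_v)$ (rather than merely in $X_v^d$) so that Lemma~\ref{appi-Nd} applies—this bookkeeping about which tuples lie in $N_d$ versus the full product is the delicate part, and I would isolate it as a preliminary lemma before running the induction.
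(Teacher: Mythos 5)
Your overall architecture coincides with the paper's: lift proximal and equicontinuous extensions coordinatewise (Lemma \ref{double-minimal}), handle limit ordinals by commuting $N_d$ with inverse limits (Lemma \ref{invlim}), and use Lemma \ref{appi-Nd} to promote $\sigma$-minimal tuples to $\G_d$-minimal tuples for the RIC step. The proximal, equicontinuous and inverse-limit portions of your sketch are fine. But the RIC/weak-mixing step contains a genuine gap, and it is exactly the point you defer as ``bookkeeping'': the claim that the relevant tuples lie in $N_d(X)$ rather than merely in $X^d$. First, your justification for openness of $\pi^{(d)}\colon N_d(X)\to N_d(Y)$ --- that open maps are stable under restriction to a closed invariant subset that surjects --- is false as a general principle (restricting an open map to a closed subset routinely destroys openness). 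What actually makes the restriction open is that $N_d(X)$ is the \emph{full} preimage $(\pi^{(d)})^{-1}(N_d(Y))$, i.e.\ that $N_d(X)$ is $\pi^{(d)}$-saturated, and restricting an open map to a full preimage is open. Equivalently one needs $R_\pi^{d}=\{(x_1,\dots,x_d):\pi(x_1)=\cdots=\pi(x_d)\}\subset N_d(X)$, and the same containment is what places the $T^{(nd)}$-minimal points of $R_\pi^{nd}$ (dense by RIC of $\pi$) inside $N_d(X)^n$ so that Lemma \ref{appi-Nd} can be applied at all.

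This containment is not bookkeeping but the substantive input of the proof. It is obtained from Theorem \ref{lab55}: under the hypothesis that $\pi$ is open and $X_\infty$ is a factor of $Y$, the base $Y$ is a $d$-step topological characteristic factor of $X$, so a dense set of fibres $\pi^{-1}(y)\times\cdots\times\pi^{-1}(y)$ lies in $N_d(X)$; one then passes to all fibres using continuity of $y\mapsto \pi^{-1}(y)$ (Lemma \ref{cts in hyper}, which again uses openness of $\pi$) together with Lemma \ref{lem-saturated}. The hypothesis ``$X_\infty$ is a factor of $Y$'' never appears in your outline, yet without it the whole RIC step fails, and one must also check it is available at each stage of the tower. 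A second, smaller gap: for weak mixing of $\pi_\eta^{(d)}$, transitivity of $R_{\pi_\eta}$ as a $\Z$-system is not enough; one needs transitivity of the $2d$-fold fibre product $R_{\pi_\eta}^{2d}$ (Lemma \ref{high trans}), because $R_{\pi_\eta^{(d)}}$ is strictly larger than the $T^{(2d)}$-orbit closure of a transitive point of $R_{\pi_\eta}^{2d}$, and enlarging that orbit closure to all of $R_{\pi_\eta^{(d)}}$ again uses the continuity of the fibre map and the saturation discussed above, not merely the inclusion $\langle \sigma_d\rangle\subset\G_d$.
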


\subsection{PI extension}
\begin{lem}\label{trivialextension}
	If $\pi:X\lra Y$ is a non-trivial extension, then so is $\pi^{(d)}:N_d(X)\lra N_d(Y)$ for any $d\in \N$.
\end{lem}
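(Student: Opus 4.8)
The statement to prove is: if $\pi\colon X\to Y$ is a non-trivial extension of minimal systems, then $\pi^{(d)}\colon N_d(X)\to N_d(Y)$ is non-trivial, i.e.\ $R_{\pi^{(d)}}\not\subset\Delta(N_d(X))$. The plan is to produce a single pair of distinct points in $N_d(X)$ with the same image under $\pi^{(d)}$, and the natural candidate is a pair differing only in the first coordinate. Since $\pi$ is non-trivial, fix $x_1,x_1'\in X$ with $x_1\ne x_1'$ and $\pi(x_1)=\pi(x_1')$. The goal is to find $(x_2,\dots,x_d)\in X^{d-1}$ such that both $(x_1,x_2,\dots,x_d)$ and $(x_1',x_2,\dots,x_d)$ lie in $N_d(X)$; then these two points are distinct, and applying $\pi$ coordinatewise gives the same point of $N_d(Y)$, so $\pi^{(d)}$ is non-trivial.

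\medskip
First I would invoke Lemma \ref{thm:genericity}: there is a dense $G_\delta$ set $X_0\subset X$ such that for every $x\in X_0$ one has $N_d[x]=\{x\}\times L_x$, where $L_x=\overline{\O}(x^{(d-1)},T\times T^2\times\cdots\times T^{d-1})$. The key point is that $L_x$ does not depend on the first coordinate being $x_1$ versus $x_1'$ — it is genuinely a property of the point whose diagonal orbit we are closing. So the strategy is: choose the proximal pair $(x_1,x_1')$ so that \emph{both} $x_1$ and $x_1'$ lie in $X_0$. This requires knowing that $R_\pi$ meets $X_0\times X_0$ off the diagonal. Since $\pi$ is non-trivial and minimal systems have $R_\pi$ with all fibers of the same "size" in a suitable sense, and $X_0$ is a dense $G_\delta$ that is $T$-invariant (being the set of continuity points of the $T$-equivariant map $\Phi$ from Lemma \ref{thm:genericity}), one can argue that $\pi^{-1}(\pi(x))$ for generic $x$ already contains a nontrivial pair inside $X_0$; alternatively, since the set of $y\in Y$ with $\pi^{-1}(y)\subset X_0$ contains a dense $G_\delta$ (as $X_0$ is residual and $\pi$ is a factor map between compact metric minimal systems, using that $\pi$ pushes residual invariant sets to residual sets), and the set of $y$ with $|\pi^{-1}(y)|\ge 2$ is non-empty and $G$-invariant hence dense-ish, we can pick $y$ with $\pi^{-1}(y)\subset X_0$ and $|\pi^{-1}(y)|\ge 2$, giving the desired $x_1\ne x_1'$ both in $X_0$.

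\medskip
Once $x_1,x_1'\in X_0$ with $\pi(x_1)=\pi(x_1')$ are fixed, pick any $(x_2,\dots,x_d)\in L_{x_1}$; then $(x_1,x_2,\dots,x_d)\in\{x_1\}\times L_{x_1}=N_d[x_1]\subset N_d(X)$. It remains to see that $(x_1',x_2,\dots,x_d)\in N_d(X)$ as well. For this I would use that $(x_1,x_1')$ is a proximal pair (automatic: if $\pi$ is a non-trivial extension of minimal systems it need not be proximal in general, so this needs care). Actually the cleaner route avoiding proximality: since $x_1,x_1'$ both lie in $X_0$ and $L_x$ depends lower-semicontinuously on $x$, and by minimality $x_1'$ lies in the orbit closure of $x_1$, one transports $L_{x_1}$ into $N_d[x_1']=\{x_1'\}\times L_{x_1'}$ by taking limits along $g_n x_1\to x_1'$ in $X_0$: from $(x_1,x_2,\dots,x_d)\in N_d(X)$ and $N_d(X)$ closed and $\G_d$-invariant, the limit of $\sigma_d(T)^{n}(x_1,\dots,x_d)$ along a subsequence with $T^{n}x_1\to x_1'$ gives a point of $N_d(X)$ of the form $(x_1',x_2',\dots,x_d')\in N_d[x_1']$, showing $L_{x_1'}$ (and hence $N_d[x_1']$) is non-trivial whenever $N_d[x_1]$ is. Since $N_d[x_1]=\{x_1\}\times L_{x_1}$ and $L_{x_1}\supsetneq\{x_1^{(d-1)}\}$ would be needed for the pair $(x_1,\ldots)$ to differ from $(x_1',\ldots)$ only in the first slot — but in fact we do not even need $L_{x_1}$ to be non-trivial: taking $x_2=\cdots=x_d=x_1$ already works since $(x_1,x_1,\dots,x_1)\in N_d(X)$ trivially and then we need $(x_1',x_1,\dots,x_1)\in N_d(X)$, which follows from $(x_1,x_1')\in R_\pi$ together with $N_d(X)\supset\{x\}\times L_x$ for $x\in X_0$ applied at $x_1'$: indeed $(x_1',x_1,\dots,x_1)$ has $N_d$-membership iff $(x_1,\dots,x_1)\in L_{x_1'}$, which holds because $L_{x_1'}\ni$ the point obtained as above. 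I expect the main obstacle to be exactly this last saturation-type step — verifying that $N_d(X)$ contains $(x_1',x_1,\dots,x_1)$ — which will be handled by the limiting argument along $g_n$ with $g_nx_1\to x_1'$ inside $X_0$, using lower semicontinuity of $x\mapsto\{x\}\times L_x$ and closedness of $N_d(X)$.
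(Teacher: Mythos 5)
Your proposal has a genuine gap, and the difficulty is entirely self-inflicted. The step you yourself flag as ``the main obstacle'' --- showing $(x_1',x_1,\dots,x_1)\in N_d(X)$ when $\pi(x_1)=\pi(x_1')$ --- is not established by your limiting argument and is in general false. If you apply $\sigma_d(T)^{n}$ to $(x_1,x_2,\dots,x_d)$ along a subsequence with $T^{n}x_1\to x_1'$, \emph{all} coordinates move, so the limit is $(x_1',x_2',\dots,x_d')$ with $x_j'=\lim T^{n}x_j$, not $(x_1',x_2,\dots,x_d)$. What you actually need is a fiberwise saturation statement, and the paper is explicit that such statements fail: the remark following Theorem \ref{cl prox} notes that even for an almost 1-1 extension $\pi:X\to X_{eq}$ one cannot conclude $(x_1,x_2,x_3)\in N_3$ from $\pi(x_1)=\pi(x_2)=\pi(x_3)$, and the whole of Theorem D is a counterexample to saturation. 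So the route through Lemma \ref{thm:genericity}, residual fibers, and lower semicontinuity cannot be completed as written.

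The gap is also unnecessary. You insisted on two points of $N_d(X)$ that literally share the coordinates $x_2,\dots,x_d$ in $X$, but for $\pi^{(d)}$ to identify them the coordinates only need to agree \emph{after applying $\pi$}. Since $N_d(X)=\overline{\O}(\Delta_d(X),\G_d(T))\supset\Delta_d(X)$ by definition, the diagonal points do the job immediately: pick $y\in Y$ with $|\pi^{-1}(y)|\ge 2$ and distinct $x,x'\in\pi^{-1}(y)$; then $x^{(d)}\ne (x')^{(d)}$ both lie in $N_d(X)$ and $\pi^{(d)}(x^{(d)})=y^{(d)}=\pi^{(d)}((x')^{(d)})$. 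This is the paper's one-line proof; no genericity, proximality, or semicontinuity is needed.
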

\begin{proof}
Since $\pi$ is non-trivial, there is some $y\in Y$ with $|\pi^{-1}(y)|\ge 2$. For each $x\in \pi^{-1}(y)$, we have $x^{(d)}\in N_d(X)$
	and $\pi^{(d)}(x^{(d)})=y^{(d)}$. Thus, $|(\pi^{(d)})^{-1}(y^{(d)})|\ge |\pi^{-1}(y)|\ge 2$. Hence $\pi^{(d)}$ is also non-trivial.
\end{proof}

\begin{lem}\label{double-minimal} Suppose that $\pi:X\lra Y$ is an almost 1-1 extension
(resp. proximal, equicontinuous, distal) between minimal systems.
Then $\pi^{(d)}:N_d(X)\lra N_d(Y)$ is also almost 1-1 (resp. proximal, equicontinuous, distal) for any $d\in\N$.
\end{lem}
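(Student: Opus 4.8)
The plan is to verify by hand that the factor map $\pi^{(d)}\colon N_d(X)\to N_d(Y)$ inherits each of the four properties from $\pi$. First note that $\pi^{(d)}$ genuinely is a factor map of minimal systems: it is continuous, intertwines the $\G_d$-actions, and sends $\Delta_d(X)$ onto $\Delta_d(Y)$, hence sends $N_d(X)=\overline{\O}(\Delta_d(X),\G_d)$ onto $N_d(Y)$, both of which are minimal by Glasner's theorem. The one observation behind all four cases is the coordinatewise description of the $\G_d$-action: every $g\in\G_d$ has the form $g=\sigma_d^a\tau_d^b$ with $g(x_1,\dots,x_d)=(T^{a+b}x_1,T^{a+2b}x_2,\dots,T^{a+db}x_d)$, so the $i$-th coordinate is always moved by \emph{some} power $T^{a+ib}$ of $T$. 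Below I equip $X^d$ with the max metric $\rho_d$ and write $\mathbf{x}=(x_1,\dots,x_d)$, $\mathbf{x}'=(x_1',\dots,x_d')$.

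The almost $1$-$1$ case is immediate: pick $y_0\in Y$ with $\pi^{-1}(y_0)=\{x_0\}$; then $y_0^{(d)}\in\Delta_d(Y)\subset N_d(Y)$, and any $\mathbf{x}\in N_d(X)$ with $\pi^{(d)}(\mathbf{x})=y_0^{(d)}$ has every $x_i=x_0$, so $(\pi^{(d)})^{-1}(y_0^{(d)})=\{x_0^{(d)}\}$; thus $\pi^{(d)}$ has a singleton fibre and is almost $1$-$1$. The distal case is nearly as quick: if $(\mathbf{x},\mathbf{x}')\in{\bf P}(N_d(X),\G_d)\cap R_{\pi^{(d)}}$, choose $g_n=\sigma_d^{a_n}\tau_d^{b_n}$ with $\rho_d(g_n\mathbf{x},g_n\mathbf{x}')\to 0$; projecting to coordinate $i$ gives $\rho(T^{a_n+ib_n}x_i,T^{a_n+ib_n}x_i')\to0$, so $(x_i,x_i')\in{\bf P}(X,T)$, and since also $\pi(x_i)=\pi(x_i')$ and $\pi$ is distal we get $x_i=x_i'$ for all $i$, i.e. $\mathbf{x}=\mathbf{x}'$.

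For the equicontinuous case, fix $\ep>0$ and let $\delta>0$ witness the equicontinuity of $\pi$. If $\mathbf{x},\mathbf{x}'\in N_d(X)$ with $\pi^{(d)}(\mathbf{x})=\pi^{(d)}(\mathbf{x}')$ and $\rho_d(\mathbf{x},\mathbf{x}')<\delta$, then each $(x_i,x_i')\in R_\pi$ with $\rho(x_i,x_i')<\delta$, hence $\sup_{n\in\Z}\rho(T^nx_i,T^nx_i')<\ep$; taking $n=a+ib$ and the maximum over $i$ yields $\sup_{g\in\G_d}\rho_d(g\mathbf{x},g\mathbf{x}')<\ep$, which is exactly what is needed. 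For the proximal case, apply Lemma \ref{extension for product} with $\pi_1=\dots=\pi_d=\pi$ and $G=\Z$ to conclude that the product map $\pi^{(d)}\colon X^d\to Y^d$ is a proximal extension of $\Z$-systems; hence for $\mathbf{x},\mathbf{x}'\in N_d(X)$ with $\pi^{(d)}(\mathbf{x})=\pi^{(d)}(\mathbf{x}')$ there is a sequence $(n_k)$ along which $\sigma_d^{n_k}\mathbf{x}$ and $\sigma_d^{n_k}\mathbf{x}'$ converge to a common point, which lies in $N_d(X)$ because $N_d(X)$ is closed and $\sigma_d$-invariant; thus $(\mathbf{x},\mathbf{x}')$ is already proximal for $\langle\sigma_d\rangle\subset\G_d$, so $R_{\pi^{(d)}}\subset{\bf P}(N_d(X),\G_d)$.

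The single point that needs care --- and the place a hasty argument would fail --- is that ``equicontinuous'' (resp.\ ``distal'', ``proximal'') as an extension for the $\G_d$-action requires controlling the whole group $\G_d$, not merely the subgroup $\langle\sigma_d\rangle$ generated by $\sigma_d=T^{(d)}$ or the individual coordinate projections; in particular, simply quoting ``a product of equicontinuous extensions is equicontinuous'' would only handle the $\langle\sigma_d\rangle$-action. The remedy is uniform across the three cases: the coordinatewise formula $g\,(x_i)_{1}^{d}=(T^{a+ib}x_i)_{1}^{d}$ reduces the question to the behavior of $\pi$ along \emph{arbitrary} powers of $T$, and that is precisely what equicontinuity, distality, or proximality of $\pi$ delivers. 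No estimate here is subtle; the whole content is this one reduction, used three times.
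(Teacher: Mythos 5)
Your proof is correct, and for three of the four cases (almost $1$-$1$, equicontinuous, distal) it is essentially what the paper does: the paper exhibits the singleton fibre $(\pi^{(d)})^{-1}(y^{(d)})=\{x^{(d)}\}$ exactly as you do, and for the equicontinuous and distal cases it simply says ``by the definitions,'' which your coordinatewise computation via $g=\sigma_d^a\tau_d^b$ acting as $T^{a+ib}$ on the $i$-th coordinate makes explicit. Where you genuinely diverge is the proximal case. The paper reruns the inductive proof of Lemma \ref{liftproximal} with ${\bf P}(X)$ replaced by $R_\pi$: an induction on $d$ in which one brings the first $d$ coordinates together inside $N_d$, extracts limits $z_1,z_2$ of the last coordinate, uses closedness of $R_\pi$ to see $(z_1,z_2)$ is again a $\pi$-fibre pair, and finishes with a diagonal argument. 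You instead apply Lemma \ref{extension for product} to the full product $(X^d,T^{(d)})\to(Y^d,T^{(d)})$ and then use the (correct, and in this direction trivial) observation that proximality under the subgroup $\langle\sigma_d\rangle$ implies proximality under the larger group $\G_d$. This is cleaner: it needs nothing about $N_d(X)$ beyond its being a closed $\G_d$-invariant subset of $X^d$, avoids the induction on $d$ entirely, and correctly isolates the asymmetry between proximality (where a subgroup suffices) and equicontinuity/distality (where the whole group $\G_d$ must be controlled, which your coordinatewise formula handles). Both routes are valid; yours reuses an already-proved product lemma instead of re-proving a variant of Lemma \ref{liftproximal}.
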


\begin{proof} Suppose that $\pi$ is an almost 1-1 extension.
Then there is $y\in Y$ such that $\pi^{-1}(y)=\{x\}$. Thus $(\pi^{(d)})^{-1}(y^{(d)})=\{x^{(d)}\}$.
Hence $\pi^{(d)}:N_d(X)\lra N_d(Y)$ is also an almost 1-1 extension.

Now suppose that $\pi$ is equicontinuous or distal,  then so is $\pi^{(d)}$ just by the definitions.
Suppose that $\pi$ is proximal. It follows from the proof of Lemma \ref{liftproximal} by replacing ${\bf P}(X)$ by $R_{\pi}$.
\end{proof}

\begin{lem}\label{invlim}
Let $\lambda$ be a limit ordinal and $(X_\mu, T)_{\mu\leq \lambda}$ be a collection of minimal systems. If $(X_{\lambda}, T)=\underset{\longleftarrow}{\lim}(X_{\mu}, T)_{\mu<\lambda}$, then for any $d\in N$, $$(N_d(X_{\lambda}), \G_d(T))=\underset{\longleftarrow}{\lim}(N_d(X_{\mu}), \G_d(T))_{\mu<\lambda}.$$
\end{lem}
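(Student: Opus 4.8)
The plan is to prove the inverse-limit statement $(N_d(X_\lambda),\G_d(T)) = \underset{\longleftarrow}{\lim}(N_d(X_\mu),\G_d(T))_{\mu<\lambda}$ by exhibiting the natural candidate map and checking it is an isomorphism. Write $\pi_\mu\colon X_\lambda\to X_\mu$ for the inverse-limit projections and $\pi_{\mu\mu'}\colon X_{\mu'}\to X_\mu$ for $\mu\le\mu'<\lambda$ for the bonding maps. Applying the functor $N_d(\cdot)$ to these — which makes sense because $N_d$ is built naturally from $\tau_d,\sigma_d$ and because $\pi^{(d)}$ restricts to a factor map $N_d(X)\to N_d(Y)$ whenever $\pi\colon X\to Y$ is a factor map — yields a compatible family $\pi_\mu^{(d)}\colon N_d(X_\lambda)\to N_d(X_\mu)$ and hence a factor map $\Theta\colon N_d(X_\lambda)\to\underset{\longleftarrow}{\lim}N_d(X_\mu)$, $p\mapsto(\pi_\mu^{(d)}(p))_{\mu<\lambda}$, intertwining the $\G_d(T)$-actions. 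The goal is to show $\Theta$ is a bijection.

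The key steps are as follows. First, $\Theta$ is injective: if $p=(x^1,\dots,x^d)$ and $p'=(y^1,\dots,y^d)$ in $N_d(X_\lambda)$ have $\pi_\mu^{(d)}(p)=\pi_\mu^{(d)}(p')$ for all $\mu<\lambda$, then $\pi_\mu(x^i)=\pi_\mu(y^i)$ for each coordinate $i$ and each $\mu$, so $x^i=y^i$ by the defining property of the inverse limit $X_\lambda$, whence $p=p'$. Second, $\Theta$ is surjective, which is the substantive point: the image $\Theta(N_d(X_\lambda))$ is a closed $\G_d(T)$-invariant subset of $\underset{\longleftarrow}{\lim}N_d(X_\mu)$ containing the point $\Theta(x^{(d)})$ for $x\in X_\lambda$, i.e. containing $((\pi_\mu(x))^{(d)})_\mu = $ the diagonal point of the inverse limit. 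Since $\underset{\longleftarrow}{\lim}N_d(X_\mu)$ is itself minimal under $\G_d(T)$ — being an inverse limit of the minimal systems $N_d(X_\mu)$ (minimality of each $N_d(X_\mu)$ is Glasner's theorem, cited in the excerpt, and an inverse limit of minimal systems is minimal) — any nonempty closed invariant subset must be the whole space; hence $\Theta$ is onto.

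I would spell out the surjectivity argument slightly differently to be safe about the inverse-limit-of-minimal-systems fact: given $(p_\mu)_{\mu<\lambda}\in\underset{\longleftarrow}{\lim}N_d(X_\mu)$, I want a point $p\in N_d(X_\lambda)$ with $\pi_\mu^{(d)}(p)=p_\mu$. One can either invoke minimality as above, or argue directly: $N_d(X_\lambda)$ is closed, so it suffices to approximate, and one lifts using that each $\pi_\mu^{(d)}$ is a factor map (in particular surjective and continuous between compact spaces) together with the compatibility $\pi_{\mu\mu'}^{(d)}(p_{\mu'})=p_\mu$; a standard compactness/finite-intersection argument on the nested closed sets $(\pi_\mu^{(d)})^{-1}(p_\mu)$ in $N_d(X_\lambda)$ produces the desired $p$. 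Finally, a continuous $\G_d(T)$-equivariant bijection between compact Hausdorff systems is an isomorphism, so $\Theta$ is the required conjugacy.

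The main obstacle I anticipate is purely bookkeeping rather than conceptual: one must be careful that the family $\{(\pi_\mu^{(d)})^{-1}(p_\mu)\}_{\mu<\lambda}$ really is a nested (directed-downward) family of nonempty closed subsets of $N_d(X_\lambda)$ — nonemptiness comes from surjectivity of each $\pi_\mu^{(d)}$, and the nesting/directedness from $\pi_{\mu}^{(d)} = \pi_{\mu\mu'}^{(d)}\circ\pi_{\mu'}^{(d)}$ combined with $\pi_{\mu\mu'}^{(d)}(p_{\mu'}) = p_\mu$ — so that the finite intersection property applies in the compact space $N_d(X_\lambda)$. Once that is in hand the proof closes quickly. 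Alternatively, the cleanest route is just to note minimality of $\underset{\longleftarrow}{\lim}N_d(X_\mu)$ and of $N_d(X_\lambda)$ and that $\Theta$ is a factor map between minimal systems that is injective, hence an isomorphism; I would present that version as the proof and relegate the finite-intersection argument to a remark if needed.
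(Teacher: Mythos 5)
Your proposal is correct and follows essentially the same route as the paper: the paper identifies $N_d(X_\lambda)$ (after reindexing inside $\prod_{\mu<\lambda}X_\mu^d\cong(\prod_{\mu<\lambda}X_\mu)^d$) as a closed invariant subset of the compatible-thread set $E=\underset{\longleftarrow}{\lim}N_d(X_\mu)$ and concludes equality from minimality of $E$ as an inverse limit of the minimal systems $(N_d(X_\mu),\G_d(T))$, which is exactly your $\Theta$-is-an-injective-factor-map-onto-a-minimal-system argument in different packaging. The alternative finite-intersection argument you sketch is also sound but is not needed.
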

\begin{proof}
Let $\phi_{\mu,\nu}: X_{\nu}\ra X_{\mu}$ be the homomorphisms associated in the inverse limit for any ordinals $\mu\leq\nu<\lambda$ and let  $\phi_{\mu,\nu}^{(d)}: N_d(X_{\nu})\ra N_d(X_{\mu})$ be the naturally induced homomorphisms.
Then $$X_{\lambda}=\left\{(x_{\alpha})\in\prod_{\alpha<\lambda}X_{\alpha}: \forall  \mu\leq\nu\leq\xi<\lambda, \phi_{\mu,\xi}(x_{\xi})=\phi_{\mu,\nu}\circ\phi_{\nu,\xi}(x_{\xi}) \right\}.$$
By identifying $(\prod_{\mu<\lambda}X_{\mu})^{d}$ with $\prod_{\mu<\lambda}X_{\mu}^{d}$, we write the point of $(\prod_{\mu<\lambda}X_{\mu})^{d}$ in the form of $(x_{\mu}^{1},\cdots,x_{\mu}^d)_{\mu<\lambda}$. If $(x_{\mu}^{1},\cdots, x_{\mu}^d)_{\mu<\lambda}\in N_{d}(X_{\lambda})$,  then there is $(x_{\mu})_{\mu<\lambda}\in X_{\lambda}$ and  sequences $(k_i)$ and $(l_i)$ of integers such that
$$(x_{\mu}^{1},\cdots, x_{\mu}^d)_{\mu<\lambda}=\lim_{i\ra \infty}(T^{l_i+k_i}x_{\mu},T^{l_i+2k_i}x_{\mu},\cdots,T^{l_i+dk_i}x_{\mu})_{\mu<\lambda} . $$
For any $\mu\leq\nu\leq\xi<\lambda$,
\begin{eqnarray*}
\phi_{\mu,\xi}^{(d)} \left((x_{\xi}^{1},\cdots,x_{\xi}^d)\right)&=& \lim_{i\ra \infty}\left(T^{l_i+k_i}\phi_{\mu,\xi}(x_{\xi}),\cdots,T^{l_i+dk_i}\phi_{\mu,\xi}(x_{\xi}))\right) \\
&=& \lim_{i\ra \infty}\left(T^{l_i+k_i}\phi_{\mu,\nu} \circ\phi_{\nu,\xi}(x_{\xi}),\cdots,T^{l_i+dk_i}\phi_{\mu,\nu}\circ\phi_{\nu,\xi}(x_{\xi}))\right) \\
&=& \phi_{\mu,\nu}^{(d)} \circ\phi_{\nu,\xi}^{(d)}\left((x_{\xi}^{1},\cdots,x_{\xi}^d)\right),
\end{eqnarray*}
and $ (x_{\mu}^{1},\cdots, x_{\mu}^d)\in N_{d}(X_{\mu})$. Let
\begin{eqnarray*}
E=  &\Big\{(x_{\mu}^{1},\cdots, x_{\mu}^d)_{\mu<\lambda}\in\prod_{\mu<\lambda}N_{d}(X_{\mu}): \forall  \mu\leq\nu\leq\xi<\lambda, \\
&\phi_{\mu,\xi}^{(d)}\left((x_{\xi}^{1},\cdots,x_{\xi}^d)\right)=\phi_{\mu,\nu}^{(d)}\circ\phi_{\nu,\xi}^{(d)}\left((x_{\xi}^{1},\cdots,x_{\xi}^d)\right) \Big\}.
\end{eqnarray*}
Then $N_d(X_{\lambda})\subset E$. Note that $N_d(X_{\lambda})$ is minimal under the action of
$\G_{d}(\prod_{\mu<\lambda}T)$ and $E$ is minimal under the action of $\prod_{\mu<\lambda}\G_d(T)\cong \G_{d}(\prod_{\mu<\lambda}T)$. It follows that $N_d(X_{\lambda})= E$. Hence $(N_d(X_{\lambda}), \G_d(T))$ is the inverse limit of $(N_d(X_{\mu}), \G_d(T))_{\mu<\lambda}$.
\end{proof}

\begin{thm}  If a minimal system $(X,T)$ is distal (resp. {\bf PI, HPI}), then so is $N_d(T)$.
\end{thm}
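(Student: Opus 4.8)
The plan is to treat the three cases (distal, \textbf{PI}, \textbf{HPI}) uniformly: in each case $(X,T)$ admits a transfinite tower of a prescribed type, and we transport that tower through the construction $X\mapsto N_d(X)$ using the lemmas already established. For the distal case, apply the Structure Theorem of Minimal Distal Systems to obtain a countable ordinal $\eta$ and minimal systems $(X_\lambda,T)$ for $0\le\lambda\le\eta$ with $X_0$ a point, $X_\eta=X$, each successor extension $\phi_\lambda\colon X_{\lambda+1}\to X_\lambda$ trivial or equicontinuous, and $X_\lambda$ the inverse limit of $(X_\mu)_{\mu<\lambda}$ for limit $\lambda$. Applying $N_d$ levelwise yields minimal systems $(N_d(X_\lambda),\G_d)$ (minimality by Glasner) together with the induced maps $\phi_\lambda^{(d)}\colon N_d(X_{\lambda+1})\to N_d(X_\lambda)$.

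Then one checks the three facts that make the new tower an \textbf{I}-tower over $\G_d$: (i) $N_d$ of a one-point system is a one-point system, since $\Delta_d$ of a point is a single $\G_d$-fixed point, so $N_d(X_0)$ is trivial; (ii) by Lemma \ref{double-minimal} the map $\phi_\lambda^{(d)}$ is equicontinuous whenever $\phi_\lambda$ is, and it is an isomorphism whenever $\phi_\lambda$ is (a trivial extension being an isomorphism); (iii) by Lemma \ref{invlim} the system $N_d(X_\lambda)$ is the inverse limit of $(N_d(X_\mu))_{\mu<\lambda}$ at limit ordinals. Hence $(N_d(X),\G_d)=(N_d(X_\eta),\G_d)$ is an \textbf{I}-system, in particular distal.

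For the \textbf{PI} case I would use the definition directly: there is a strictly \textbf{PI} system $(\tilde X,T)$ and a proximal extension $\pi\colon\tilde X\to X$. Running the same levelwise argument on the strictly \textbf{PI} tower of $\tilde X$ --- where now each successor extension is proximal or equicontinuous, both cases covered by Lemma \ref{double-minimal} --- shows that $(N_d(\tilde X),\G_d)$ is strictly \textbf{PI}; and $\pi^{(d)}\colon N_d(\tilde X)\to N_d(X)$ is proximal by Lemma \ref{double-minimal}, so $(N_d(X),\G_d)$ is \textbf{PI}. The \textbf{HPI} case is the same argument with ``almost $1$-$1$'' in place of ``proximal'' throughout (here the \textbf{HPI} tower of $X$ transports directly, with no proximal cap needed), again with Lemma \ref{double-minimal} as the sole nontrivial input.

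I do not expect a serious obstacle. The only points requiring care are that the acting group changes from $\Z$ to $\G_d$ when passing to $N_d$, so one must observe that the transported tower satisfies the definitions of \textbf{I}/\textbf{PI}/\textbf{HPI} system as stated for general (abelian) group actions, and that the induced bonding maps $\phi_\lambda^{(d)}$ are genuinely compatible with the inverse-limit projections --- this compatibility is exactly the computation already carried out in the proof of Lemma \ref{invlim}. With those remarks in place the theorem follows by transfinite induction along $\eta$, invoking Lemmas \ref{double-minimal} and \ref{invlim} at the successor and limit stages respectively.
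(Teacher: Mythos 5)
Your proposal is correct and follows essentially the same route as the paper, which proves the theorem by citing exactly the three inputs you use: Lemma \ref{trivialextension}, Lemma \ref{double-minimal} (transporting proximal/equicontinuous/almost 1-1/distal extensions to $N_d$), and Lemma \ref{invlim} (compatibility with inverse limits), applied levelwise to the defining tower. Your write-up merely makes explicit the transfinite induction that the paper leaves as "follows directly by the definitions."
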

\begin{proof}
It follows directly by the definitions and Lemma \ref{trivialextension}, \ref{double-minimal}, \ref{invlim}.
\end{proof}

\subsection{RIC extension}

In this section, we show that if $\phi: (X,T)\ra (Y,T)$ is a {\bf RIC} extension (resp. {\bf RIC} weakly mixing) between minimal systems, then so is $\phi^{(d)}$ for any $d\in N$.  We need the following lemma.

\begin{lem}\label{appi-Nd} Let $(X,T)$ be a minimal system and $d\in\N$. For $(x_1,\ldots,x_d), (y_1,\ldots,y_d)\in N_d(T)$,
if $(x_1,\ldots,x_d, y_1,\ldots,y_d)$ is a minimal point in $(X^{2d},T)$ under the diagonal action, then
$((x_1,\ldots,x_d), (y_1,\ldots,y_d))$ is a minimal point in $(N_d(T)\times N_d(T), \G_{d}(T))$  under the diagonal action.

Generally, let $n, d\in\N$. For $(x_i^1)_{1}^d, \ldots, (x_i^n)_{1}^d\in N_d(T)$,
if $((x_i^1)_{1}^d, \ldots, (x_i^n)_{1}^d)$ is a minimal point in $(X^{n}, T)$ under the diagonal action, then
$ ((x_i^1)_{1}^d, \ldots, (x_i^n)_{1}^d)$ is a minimal point in $ ((N_{d}(T))^{n}, \G_{d}(T))$ under the diagonal action.
\end{lem}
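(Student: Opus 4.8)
The plan is to reduce the claim to a comparison of two group actions on $(N_d(X))^n$ and then argue by induction on $d$.

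\emph{Reduction.} View $\mathbf{z}=((x_i^1)_1^d,\ldots,(x_i^n)_1^d)$ as a point of $(N_d(X))^n\subset X^{nd}$. On $(N_d(X))^n$ the diagonal action of $\sigma_d(T)$ is exactly the coordinatewise action of $T$ on $X^{nd}$, so the hypothesis says precisely that $\mathbf{z}$ is a minimal point for the diagonal $\sigma_d(T)$-action, and we must upgrade this to minimality for the diagonal $\G_d(T)$-action. For $n=1$ this is contained in Glasner's theorem that $(N_d(X),\G_d(T))$ is minimal, so the content is the case $n\ge 2$; for $d\le 2$ it is still elementary (there $\G_2(T)$ is the full product $\Z^2$-action), while for $d\ge 3$ the group $\G_d(T)$ is a proper subgroup of the product action and a genuine argument is needed. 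I would run an induction on $d$, proving the statement for all $n$ at once. For $d=1$ one has $N_1(X)=X$ and $\G_1(T)=\langle T\rangle$, so the two diagonal actions coincide and there is nothing to prove.

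\emph{The inductive step.} For $d\ge 2$ the map $p\colon X^d\to X^{d-1}$ forgetting the first coordinate carries $N_d(X)$ onto $N_{d-1}(X)$ and satisfies $p\circ\sigma_d(T)=\sigma_{d-1}(T)\circ p$ and $p\circ\tau_d(T)=\sigma_{d-1}(T)\tau_{d-1}(T)\circ p$; hence $p$ is a semiconjugacy $(N_d(X),\G_d(T))\to(N_{d-1}(X),\G_{d-1}(T))$ along the surjection $\sigma_d\mapsto\sigma_{d-1}$, $\tau_d\mapsto\sigma_{d-1}\tau_{d-1}$. Applying $p$ in each block, $p^{(n)}$ sends $\mathbf{z}$ to a point which is minimal for the diagonal $\sigma_{d-1}(T)$-action, hence, by the inductive hypothesis, minimal for the diagonal $\G_{d-1}(T)$-action. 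Now fix $\mathbf{z}'\in\overline{\O}(\mathbf{z},\G_d(T))$ (diagonal action); it suffices to show $\mathbf{z}\in\overline{\O}(\mathbf{z}',\G_d(T))$. Since $p^{(n)}(\mathbf{z})$ is $\G_{d-1}(T)$-minimal and $p^{(n)}(\mathbf{z}')$ lies in its orbit closure, there is a net $(h_\gamma)$ in the diagonal $\G_{d-1}(T)$-action with $h_\gamma\,p^{(n)}(\mathbf{z}')\to p^{(n)}(\mathbf{z})$; lifting each $h_\gamma$ to $g_\gamma$ in the diagonal $\G_d(T)$-action and passing to a convergent subnet gives $\mathbf{z}^*:=\lim g_\gamma\mathbf{z}'\in\overline{\O}(\mathbf{z}',\G_d(T))$ with $p^{(n)}(\mathbf{z}^*)=p^{(n)}(\mathbf{z})$, i.e. $\mathbf{z}^*$ agrees with $\mathbf{z}$ in every coordinate except possibly the first coordinate of each of the $n$ blocks. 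Choosing a minimal idempotent $u$ of the Ellis semigroup $E(X,T)$ whose diagonal action fixes $\mathbf{z}$ and replacing $\mathbf{z}^*$ by $u\mathbf{z}^*$ (still in $\overline{\O}(\mathbf{z}^*,\sigma_d(T))\subset\overline{\O}(\mathbf{z}^*,\G_d(T))$, still agreeing with $\mathbf{z}$ off the first coordinates, and now fixed by $u$), we may assume in addition that $\mathbf{z}^*$ is itself minimal for the diagonal $\sigma_d(T)$-action.

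\emph{Conclusion and the main obstacle.} It remains to pass from $\mathbf{z}^*$ to $\mathbf{z}$, i.e. to correct the first coordinates. Set $M:=\overline{\O}(\mathbf{z},\G_d(T))$ and $M_0:=\overline{\O}(\mathbf{z},\sigma_d(T))$; then $M=\overline{\bigcup_{q\in\Z}\tau_d(T)^q M_0}$, each slice $\tau_d(T)^q M_0$ is a $\sigma_d(T)$-minimal subset of $M$, and two slices are either equal or disjoint. The key point is to show that the $\sigma_d(T)$-minimal set $\overline{\O}(\mathbf{z}^*,\sigma_d(T))$ — which lies in $M$ since $\mathbf{z}^*\in M$, and whose $p^{(n)}$-image equals that of $M_0$ — is actually one of these slices $\tau_d(T)^q M_0$. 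Granting this, $\mathbf{z}^*\in\tau_d(T)^q M_0$ and $\tau_d(T)^q\mathbf{z}\in\tau_d(T)^q M_0$ together with the $\sigma_d(T)$-minimality of $\tau_d(T)^q M_0$ give $\tau_d(T)^q\mathbf{z}\in\overline{\O}(\mathbf{z}^*,\sigma_d(T))$, whence $\mathbf{z}\in\overline{\O}(\tau_d(T)^{-q}\mathbf{z}^*,\sigma_d(T))\subset\overline{\O}(\mathbf{z}^*,\G_d(T))\subset\overline{\O}(\mathbf{z}',\G_d(T))$, which finishes the induction. I expect this slice-identification to be the hard part: the first-coordinate return times of $\mathbf{z}$ and the return times keeping the remaining coordinates near $\mathbf{z}$ are a priori distinct syndetic sets, and reconciling them seems to require the full structure of $N_d(X)$ — equivalently, an Ellis-semigroup argument showing that a $\sigma_d(T)$-minimal idempotent of $E(N_d(X),\G_d(T))$ lies in its kernel, i.e. is $\G_d(T)$-minimal.
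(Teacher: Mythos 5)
Your reduction is set up carefully, but the proof is not complete: the step you yourself label ``the main obstacle'' --- showing that the $\sigma_d(T)$-minimal set $\overline{\O}(\mathbf{z}^*,\sigma_d(T))$ is one of the slices $\tau_d(T)^qM_0$ --- is exactly where all the difficulty of the lemma lives, and you give no argument for it. The decomposition $M=\overline{\bigcup_{q\in\Z}\tau_d(T)^qM_0}$ involves infinitely many slices, so a $\sigma_d(T)$-minimal subset of $M$ could a priori be a limit of slices without being a slice; nothing in your inductive hypothesis (which only controls the last $d-1$ coordinates of each block) rules this out, and the Ellis-semigroup statement you gesture at (that a $\sigma_d$-minimal idempotent of the enveloping semigroup of $(N_d,\G_d)$ is $\G_d$-minimal) is essentially a restatement of the lemma itself, so invoking it would be circular. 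Note also that in general a point minimal under a subgroup of an abelian action need not be minimal under the whole group (consider $(x_0,y_0)$ under the group generated by $T\times\mathrm{id}$ and $\mathrm{id}\times S$ with $y_0$ not $S$-almost periodic), so your aside that the case $d=2$ is ``elementary'' because $\G_2(T)$ is the full $\Z^2$-action is not right either: passing from diagonal $\sigma_2$-minimality to minimality under the larger group already requires the real argument.

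The paper avoids all of this with a short bootstrap of Glasner's theorem. Let $S=\sigma_{nd}(T)$ act diagonally on $X^{nd}$ and let $X_{nd}$ be the $S$-orbit closure of the given tuple $x$; by hypothesis $(X_{nd},S)$ is minimal, so by Glasner's theorem the point $x^{(d)}\in\Delta_d(X_{nd})\subset N_d(X_{nd},S)$ is a minimal point for $\G_d(S)=\langle S^{(d)},S\times S^2\times\cdots\times S^d\rangle$. Projecting this $d$-fold repetition of $x$ onto a suitable choice of coordinates (taking the $j$-th entry of each block from the $j$-th copy, which carries the action $T^{a+jb}$) exhibits the diagonal $\G_d(T)$-action on $((x_i^1)_1^d,\ldots,(x_i^n)_1^d)$ as a factor of a minimal action, and minimal points project to minimal points. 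If you want to salvage your induction you would need to prove the slice-identification directly, and I do not see how to do that without effectively reproving the lemma; I would recommend switching to the auxiliary-system argument.
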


\begin{proof} It is clear for $d=1$. For $d=2$, suppose that $x=(x_1,x_2,y_1,y_2)$ is $S=T\times T\times T\times T$- minimal.
Let $X_4$ be the orbit closure of $x$ under the action of $S$. Then $(X_4,S)$ is minimal, and hence
$(x,x)$ is a minimal point under the action of $\langle S\times S, S\times S^2\rangle$.
That is, $((x_1,x_2,y_1,y_2),(x_1,x_2,y_1,y_2))$ is minimal under the action of
$$\{T^{n+m}\times T^{n+m}\times T^{n+m}\times T^{n+m}\times T^{n+2m}\times T^{n+2m}\times T^{n+2m}\times T^{n+2m}:n, m\in\Z\}.$$
By  projecting to the first, third, sixth and eighth coordinates,  it follows that $(x_1,x_2,y_1,y_2)$ is minimal under the action $\{T^{n+m}\times T^{n+2m}\times T^{n+m}\times T^{n+2m}:n,m\in\Z\}$.

\medskip
For $d>2$, assume that $x=(x_1,\ldots,x_d,y_1,\ldots,y_d)$ is $S=\sigma_{2d}(T)$-minimal.
Let $X_{2d}$ be the orbit closure of $x$ under $S$. Then $(X_{2d},S)$ is minimal, and thus
$x^{(d)}$ is a minimal point for $\langle S\times S\times\ldots\times S, S\times S^2\times\ldots\times S^d\rangle$.
That is $(x_1,\ldots,x_d,y_1,\ldots,y_d)^{(d)}$ is minimal under the action of
$\{S^{n+m}\times S^{n+2m}\times\ldots\times S^{n+dm}:n,m\in\Z\}$. By projecting to the $(k+(k-1)d)^{{\rm th}}$ and $(k+kd)^{{\rm th}}$ coordinates for $k\in\{1,\cdots,d\}$, it follows that $(x_1,\ldots,x_d,y_1,\ldots,y_d)$ is minimal under the action of
$$\{T^{n+m}\times T^{n+2m}\times\ldots\times T^{n+dm}\times T^{n+m}\times T^{n+2m}\times\ldots\times T^{n+dm}:n,m\in\Z\}.$$

\medskip
In general, let $n, d\in\N$ and $(x_i^1)_{1}^d, \ldots, (x_i^n)_{1}^d\in N_d(X)$.
If $x=((x_i^1)_{1}^d, \ldots, (x_i^n)_{1}^d)$ is $ S=\sigma_{nd}(T)$- minimal, then $(X_{nd},S)$ is minimal, where $X_{nd}$ denotes the orbit closure of $x$ under $S$. Thus
$x^{(d)}$ is a minimal point under the action of $\langle S\times S\times\ldots\times S, S\times S^2\times\ldots\times S^d\rangle$.
That is $((x_i^1)_{1}^d, \ldots, (x_i^n)_{1}^d)^{(d)}$ is minimal under the action of
$$\{S^{n+m}\times S^{n+2m}\times\ldots\times S^{n+dm}:n,m\in\Z\}.$$
By projecting to the $(k+(k+j-1)d)^{{\rm th}}$ coordinates for $k,j\in\{1,\cdots,d\}$,
it follows that $((x_i^1)_{1}^d, \ldots, (x_i^n)_{1}^d)$ is minimal under the action of
$$\{(T^{n+m}\times T^{n+2m}\times\ldots\times T^{n+dm})^{(n)}:n,m\in\Z\}.$$
Then $((x_i^1)_{1}^d, \ldots, (x_i^n)_{1}^d)$ is minimal in $((N_d(T))^n,\G_d)$ under the diagonal action.
\end{proof}

\begin{lem}\cite[A.8]{Vr}\label{cts in hyper}
Let $f: X\lra Y$ be a  continuous surjective map between compact metric spaces. Then
\begin{enumerate}
\item the map $F:Y\lra 2^{X}, y\mapsto f^{-1}(\{y\})$ is upper semi-continuous;
\item $F$ is continuous if and only if $f$ is open.
\end{enumerate}
\end{lem}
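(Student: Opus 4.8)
The plan is to establish (1) directly from compactness, and then to deduce (2) from (1) together with the elementary observation that lower semi-continuity of $F$ is nothing but openness of $f$ rephrased.

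For (1) I would argue as follows. Fix $y\in Y$ and an open set $V\subset X$ with $F(y)=f^{-1}(y)\subset V$. Since $X$ is compact, $X\setminus V$ is compact, so $f(X\setminus V)$ is compact, hence closed in $Y$; and $y\notin f(X\setminus V)$ because $f^{-1}(y)\subset V$. Therefore $U:=Y\setminus f(X\setminus V)$ is an open neighbourhood of $y$, and for every $y'\in U$ one has $f^{-1}(y')\cap(X\setminus V)=\varnothing$, i.e. $F(y')\subset V$. This shows $\{y':F(y')\subset V\}$ is a neighbourhood of $y$, so $F$ is upper semi-continuous at $y$; as $y$ is arbitrary, $F$ is upper semi-continuous. (Equivalently, in sequential terms: if $y_n\to y$ and $x_{n_i}\to x$ with $x_{n_i}\in F(y_{n_i})$, then $f(x)=\lim f(x_{n_i})=\lim y_{n_i}=y$, so $x\in F(y)$, giving $\limsup F(y_n)\subset F(y)$.)

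For (2) I would first recall that $F$ is continuous iff it is both upper and lower semi-continuous, so by (1) continuity of $F$ is equivalent to lower semi-continuity of $F$. The crucial identity is that, for every open $V\subset X$,
\[\{y'\in Y: F(y')\cap V\neq\varnothing\}=f(V),\]
because $F(y')\cap V\neq\varnothing$ says precisely that some $x'\in V$ satisfies $f(x')=y'$. If $f$ is open, then given an open $V$ with $F(y)\cap V\neq\varnothing$ the set $f(V)$ is open and contains $y$, and by the identity it equals $\{y':F(y')\cap V\neq\varnothing\}$, which is therefore a neighbourhood of $y$; hence $F$ is lower semi-continuous. Conversely, if $F$ is lower semi-continuous and $V\subset X$ is open, then for each $y\in f(V)$ we have $F(y)\cap V\neq\varnothing$, so $f(V)=\{y':F(y')\cap V\neq\varnothing\}$ is a neighbourhood of $y$; thus $f(V)$ is open, and $f$ is an open map.

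I do not expect a genuine obstacle here: the only place the hypotheses are really used is the step ``$X\setminus V$ compact $\Rightarrow$ $f(X\setminus V)$ closed'' in (1), which needs compactness of $X$ together with the Hausdorff property of $Y$; everything else is a formal unwinding of the semi-continuity definitions recalled just before the statement. I would also note in passing that, since $X$ is a compact metric space, the neighbourhood formulations of upper and lower semi-continuity used above agree with the Hausdorff-metric (sequential) formulations, so one may freely switch between them.
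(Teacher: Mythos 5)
Your proof is correct. The paper does not prove this lemma at all---it is quoted from de Vries \cite[A.8]{Vr}---so there is no in-paper argument to compare against; your argument (compactness of $X\setminus V$ plus closedness of its image for upper semi-continuity, and the identity $\{y': F(y')\cap V\neq\varnothing\}=f(V)$ translating lower semi-continuity into openness of $f$) is the standard one and matches the neighbourhood formulations of semi-continuity recalled just before the statement.
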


\begin{lem}\cite{V83}\label{high trans}  
Let $\pi: (X,T)\lra (Y,T)$ be a {\bf RIC} weakly mixing extension. Then $(R_{\pi}^{n},T^{(n)})$ is
topologically transitive for any $n\in \N$.
\end{lem}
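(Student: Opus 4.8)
The plan is to view this as the relative form, over the base $(Y,T)$, of Furstenberg's classical theorem that a topologically weakly mixing flow has all of its finite powers topologically transitive, and to relativize the proof of that theorem. I would argue by induction on $n$, recording first that by the definition of a {\bf RIC} extension the minimal points are dense in each fibred product $R_\pi^{(k)}$. For $n=1$ the statement is just that $X=R_\pi^{(1)}$ is minimal, hence transitive; for $n=2$ it is precisely the assumption that $\pi$ is a weakly mixing extension, i.e. that $R_\pi=R_\pi^{(2)}$ is topologically transitive. For the inductive step I would write $R_\pi^{(n+1)}=X\times_Y R_\pi^{(n)}$ and prove that the fibred product of the weakly mixing {\bf RIC} extension $\pi:X\to Y$ with the system $R_\pi^{(n)}\to Y$ --- which is transitive by the inductive hypothesis and has dense minimal points by {\bf RIC} --- is again transitive.

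To establish that transitivity it suffices to show that for any two nonempty open $\mathcal U,\mathcal V\subseteq R_\pi^{(n+1)}$ the return set $N(\mathcal U,\mathcal V)=\{m\in\Z:T^{m}\mathcal U\cap\mathcal V\neq\emptyset\}$ (with $T$ acting diagonally) is nonempty, and one may take $\mathcal U,\mathcal V$ to be the traces on $R_\pi^{(n+1)}$ of open boxes $U_0\times\cdots\times U_n$ and $V_0\times\cdots\times V_n$ with all $U_i,V_i$ open in $X$. Here the openness of $\pi$ is essential: by Lemma \ref{cts in hyper} the fibre map $y\mapsto\pi^{-1}(y)$ is continuous, so one can shrink the $U_i$ (and the $V_i$) so that they all sit over one small open subset of $Y$ and each meets every fibre of $\pi$ it touches; in this way the boxes ``fill'' the fibred products and the manipulations below stay inside the systems $R_\pi^{(k)}$. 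The combinatorial core then relativizes Furstenberg's argument: working with the return-time sets of these boxes, one uses the weak mixing of $\pi$ to move pairs of coordinates and the inductive hypothesis on $R_\pi^{(n)}$ for the remaining ones, passing along the way to the still nonempty open sets $U_i\cap T^{-m}V_i$ for suitable common return times $m$, and one uses that the return sets at the dense minimal (almost periodic) points of $R_\pi^{(n+1)}$ are syndetic to force the successive intersections to be nonempty. Density of minimal points, guaranteed by {\bf RIC}, is exactly what substitutes for the minimality of the phase space in the absolute theorem.

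The step I expect to be the genuine obstacle is precisely this relativization of Furstenberg's intersection argument. In the absolute theorem one leans heavily on minimality of the phase space: every return set $N(U,V)$ is then syndetic, and a syndetic set and a thick set must meet. Over the base $Y$ the fibred product $R_\pi^{(n)}$ is only transitive with a dense set of minimal points, and the open boxes have to be controlled fibre by fibre over $Y$; arranging the choices of the $U_i$, the $V_i$ and the small base set in $Y$ so that the chain of return-time intersections closes up while never leaving the systems $R_\pi^{(k)}$ is the technical heart of the matter, and it is the openness of $\pi$, through the continuity of the fibre map in Lemma \ref{cts in hyper}, that makes these compatible choices possible.
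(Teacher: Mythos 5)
First, a point of comparison: the paper does not prove this lemma at all --- it is quoted from Veech's survey \cite{V83} (see also Ellis--Glasner--Shapiro \cite{EGS75}) --- so there is no in-paper argument to measure your attempt against. Judged on its own terms, your proposal sets up the right skeleton: the base cases are correct ($R_\pi^1=X$ is minimal, $R_\pi^2=R_\pi$ is transitive by the weak mixing hypothesis), the decomposition $R_\pi^{n+1}=X\times_Y R_\pi^{n}$ is the natural one, and you correctly identify the two ingredients that must carry the load, namely openness of $\pi$ (via Lemma \ref{cts in hyper}) and density of minimal points in each $R_\pi^{k}$ (the {\bf RIC} hypothesis). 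But the inductive step, which is the entire content of the lemma, is never carried out: you describe what a relativized Furstenberg intersection argument would have to accomplish and then explicitly defer it as ``the technical heart of the matter.'' As written this is a proof plan, not a proof.

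Moreover, the deferred step is not a routine adaptation of the absolute case. In Furstenberg's theorem the family $\{N(U,V)\}$ has the finite intersection property because, after choosing a common return time $m$, one passes to $U\cap T^{-m}U'$ and $V\cap T^{-m}V'$, which are automatically nonempty open subsets of $X$; no minimality is needed. Relatively, after such a replacement there is no reason the shrunken box $\prod_i\left(U_i\cap T^{-m}V_i\right)$ still meets $R_\pi^{n+1}$ --- the shrunken sets must retain a common fibre over $Y$ --- and this is exactly the point your sketch gestures at without resolving. Nor can ``syndeticity of return sets at minimal points'' close the chain by itself: syndetic sets do not have the finite intersection property, so one would need the weak-mixing return sets to be thick (or thickly syndetic) relative to the fibres, which you do not establish. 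The proofs in the literature sidestep all of this: they use the algebraic characterization of {\bf RIC} extensions via the enveloping semigroup and the circle operation (the fibre $\pi^{-1}(y_0)$ is $F\circ ux_0$ for the relevant Ellis group $F$), which converts transitivity of $R_\pi^{n}$ into a density statement about $F$ in the $\tau$-topology. To make your route work you would have to genuinely solve the fibred intersection problem; to follow the standard route you would have to import that algebraic machinery. Neither is done here, so the argument has a real gap at its central step.
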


\begin{thm}\label{RIClift} Let $\pi:(X,T)\ra (Y,T)$ be a {\bf RIC} (resp. {\bf RIC}  weakly mixing)
extension between minimal systems with $X_{\infty}$ being a factor of $Y$. Then for each $d\in\N$,
$\pi^{(d)}:N_d(X)\lra N_d(Y)$ is also  {\bf RIC} (resp. {\bf RIC} and weakly mixing).
\end{thm}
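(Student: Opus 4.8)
The plan is to verify the two defining properties of a {\bf RIC} extension for $\pi^{(d)}$ separately, using Lemma \ref{appi-Nd} as the bridge between minimality in the big product systems and minimality under the $\G_d$-action. First I would handle openness of $\pi^{(d)}$. Since $\pi$ is {\bf RIC}, it is open, so by Lemma \ref{cts in hyper} the fibre map $y\mapsto \pi^{-1}(y)$ is continuous from $Y$ to $2^X$. Openness of $\pi^{(d)}$ is equivalent to continuity of the fibre map $N_d(Y)\to 2^{N_d(X)}$, $\mathbf{y}\mapsto (\pi^{(d)})^{-1}(\mathbf{y})$; and here is where the hypothesis that $X_\infty$ is a factor of $Y$ is needed. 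By Theorem \ref{lab55} applied to $\pi$ (open, with $X_\infty$ a factor of $Y$), $Y$ is a $d$-step topological characteristic factor of $X$, so by Lemma \ref{lem-saturated}/Theorem \ref{label53}(1) we get $(\pi^{(d+1)})^{-1}(N_{d+1}(Y))=N_{d+1}(X)$ — equivalently $N_{d+1}(X)$ is $\pi^{(d+1)}$-saturated — and in the same way $N_d(X)$ is $\pi^{(d)}$-saturated. From saturatedness one computes $(\pi^{(d)})^{-1}(\mathbf y)$ fibrewise as $\{(x_1,\ldots,x_d)\in N_d(X): \pi(x_i)=y_i\}$, which by Lemma \ref{thm:genericity}-type reasoning equals $\pi^{-1}(y_1)\times\cdots\times\pi^{-1}(y_d)$ intersected appropriately; continuity of each $y_i\mapsto\pi^{-1}(y_i)$ then gives continuity of the product fibre map, hence $\pi^{(d)}$ is open.

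Next I would prove density of minimal points in $R_{\pi^{(d)}}^n$ for every $n\ge 1$. Fix $n$ and take a nonempty open set in $R_{\pi^{(d)}}^n$, i.e.\ a tuple of $n$ points $((x^j_i)_{i=1}^d)_{j=1}^n$ in $N_d(X)$ with a common image under $\pi^{(d)}$; reorganizing coordinates, the condition $\pi^{(d)}(\text{each})=\text{same}$ means that for each fixed $i$ the $n$-tuple $(x^1_i,\ldots,x^n_i)$ lies in $R_\pi^n$. The collection of all $nd$ coordinates then lies in a product of fibres $\prod_i R_\pi^n$, which is (up to coordinate permutation) $R_{\pi^{(\text{block})}}$ for a suitable power, and since $\pi$ is {\bf RIC} the minimal points of $R_\pi^n$ are dense; by Lemma \ref{extension for product} / openness, minimal points are dense in the product $\prod_{i=1}^d R_\pi^n$ as well. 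So arbitrarily close to our chosen tuple there is one whose full list of $nd$ coordinates forms a minimal point in $(X^{nd},T)$ under the diagonal action. Now Lemma \ref{appi-Nd} applies verbatim: this tuple, viewed as $nd$ points in $X$ which happen to assemble into $n$ points of $N_d(X)$, is then a minimal point in $((N_d(X))^n,\G_d)$. Since it still lies in $R_{\pi^{(d)}}^n$ (that relation is closed and our approximants satisfy the fibre condition, or one perturbs within the relation), density of minimal points in $R_{\pi^{(d)}}^n$ follows. Combined with openness, this shows $\pi^{(d)}$ is {\bf RIC}.

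For the weakly mixing case, it remains to show that if in addition $R_\pi$ is topologically transitive then so is $R_{\pi^{(d)}}$. I would use Lemma \ref{high trans}: a {\bf RIC} weakly mixing extension $\pi$ has $(R_\pi^k, T^{(k)})$ topologically transitive for every $k$. Take $k$ large enough (namely $k=d$, or more if the reorganization needs it) so that a transitive point of $R_\pi^d$ supplies, through the coordinate identification used above, a point of $R_{\pi^{(d)}}$ whose $\G_d$-orbit is dense. Concretely, a transitive point of $(R_\pi^d,T^{(d)})$ gives $d$ pairs $(x_i,y_i)$ with dense $T$-orbit of $(x_1,y_1,\ldots,x_d,y_d)$ in $R_\pi^d$; one checks that $((x_i)_1^d,(y_i)_1^d)\in R_{\pi^{(d)}}$ and that its orbit under $\G_d=\langle\sigma_d,\tau_d\rangle$, which contains $\langle\sigma_d\rangle$, is dense in $R_{\pi^{(d)}}\subset N_d(X)\times_{N_d(Y)} N_d(X)$ — using that $R_{\pi^{(d)}}$ is contained in the $\sigma_d$-closure of such points by the density statement just proved. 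Hence $R_{\pi^{(d)}}$ is topologically transitive and $\pi^{(d)}$ is {\bf RIC} weakly mixing.

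The main obstacle I anticipate is the bookkeeping in the middle step: getting from ``minimal points dense in each fibre power $R_\pi^n$'' to ``minimal points dense in $R_{\pi^{(d)}}^n$'' requires carefully matching the block-permuted $X^{nd}$-coordinates with the hypothesis of Lemma \ref{appi-Nd} and checking that the approximating tuples genuinely stay inside the closed relation $R_{\pi^{(d)}}^n$; the openness input (so that fibre products behave well and minimal points pass to products) and the characteristic-factor/saturatedness input from Theorem \ref{lab55} are exactly what make this go through, which is why the hypothesis ``$X_\infty$ is a factor of $Y$'' cannot be dropped.
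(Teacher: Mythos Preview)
Your overall architecture is right --- openness via saturatedness, density of minimal points via Lemma \ref{appi-Nd}, weak mixing via Lemma \ref{high trans} --- but the central reduction step is missing, and the obstacle you flag at the end is a genuine gap, not just bookkeeping.

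The problem is in the density argument. You want $T^{(nd)}$-minimal points dense near a general element of $R_{\pi^{(d)}}^n$ lying over $(y_1,\ldots,y_d)\in N_d(Y)$, with the additional constraint that each of the $n$ rows $(x^j_i)_{i=1}^d$ belongs to $N_d(X)$. You propose to get this from density of minimal points in $\prod_{i=1}^d R_\pi^n$, but RIC does not supply that: RIC gives minimal points dense in $R_\pi^m$, where all $m$ coordinates lie over a \emph{single} $y\in Y$; it says nothing about products of fibres over distinct base points $y_1,\ldots,y_d$, and a product of minimal points need not be minimal. Lemma \ref{extension for product} concerns proximal/equicontinuous/distal extensions and is irrelevant here. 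Even if you could produce a $T^{(nd)}$-minimal point nearby in $X^{nd}$, nothing forces its rows to lie in $N_d(X)$, so Lemma \ref{appi-Nd} does not apply.

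The paper fixes this with a two-step device. First (Claim 1) it shows $\pi^{-1}(y)^d\subset N_d(X)$ for \emph{every} $y\in Y$, using the characteristic-factor hypothesis together with continuity of the fibre map $\Pi^{(d)}$. This makes the fibre of $R_{\pi^{(d)}}^n$ over a \emph{diagonal} point $y^{(d)}$ equal to $\pi^{-1}(y)^{nd}$, i.e.\ a fibre of $R_\pi^{nd}$; now RIC of $\pi$ gives $T^{(nd)}$-minimal points dense there, and the row constraint is automatic. Second, a general $(y_1,\ldots,y_d)\in N_d(Y)$ is written as $\lim g_k(u_k^{(d)})$ with $g_k\in\G_d$, and continuity of $\Pi^{(d)}$ pulls any target point in the fibre over $(y_1,\ldots,y_d)$ back to the diagonal fibres over $u_k$, where the problem is already solved. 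The same diagonal-fibre-plus-$\G_d$-transport trick handles weak mixing, with $k=2d$ (not $d$) in Lemma \ref{high trans}: a point of $R_{\pi^{(d)}}$ has $2d$ coordinates in $X$, and it is only over a diagonal base point that these all lie in a single $R_\pi^{2d}$-fibre, where transitivity is available.
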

\begin{proof} \noindent{\bf Claim 1.} For any $d\in\N$, we have $$N_{d}(T)\supset R_{\pi}^{d}=\{(x_1,\cdots,x_d)\in X^d: \pi(x_1)=\cdots=\pi(x_d)\}.$$
\begin{proof}[Proof of Claim]  It is equivalent to show that for any $y\in Y$, $\pi^{-1}(y)\times\cdots\times\pi^{-1}(y)\subset N_d(X)$.  By Lemma \ref{cts in hyper}, the map
$$\Pi^{(d)}: Y^{d}\lra 2^{X^{d}}, (y_1,\cdots,y_d)\mapsto \pi^{-1}(y_1)\times\cdots\times \pi^{-1}(y_d)$$
 is continuous since $\pi$ is open. By  Theorem \ref{lab55}, $Y$ is a $d$-step characteristic factor of $X$, which means there is a dense $G_{\delta}$ subset $\Omega$ of $X$ such that for any $x\in \Omega$,  $L_{x}=\overline{\O}(x, \tau_{d}(T))$ is $\pi^{(d)}$-saturated. Thus for any $y'\in \pi(\Omega)$, $\pi^{-1}(y')\times\cdots\times\pi^{-1}(y')\subset N_d(X)$. Note that $\pi(\Omega)$ is dense in $Y$. Then for any $y\in Y$, there exists a sequence $(y_n)$ in $\pi(\Omega)$ with $y_n\ra y$. By the continuity of $\Pi^{(d)}$, $\pi^{-1}(y_n)\times\cdots\times\pi^{-1}(y_n) $ converges to $\pi^{-1}(y)\times\cdots\times\pi^{-1}(y)$ in the hyperspace. From the definition of the convergence in hyperspaces, we conclude that $\pi^{-1}(y)\times\cdots\times\pi^{-1}(y)\subset N_d(X)$. This proves the claim.
\end{proof}

Now Fix $d\in \N$ and let $\phi=\pi^{(d)}:N_d(X)\lra N_d(Y)$.

\medskip
By the definition of {\bf RIC} extensions, it follows that $\pi$ is open and hence  $\phi$ is open. It remains to show that for any $n\in \N$, the minimal points in $R_{\phi}^n$ are dense, where
$$R_\phi^n=\left\{\left((x_i^1)_{1}^{d},\cdots, (x_i^n)_{1}^{d}\right)\in N_d(X)^n: ~~\phi\left((x_i^1)_{1}^{d}\right)=\cdots= \phi\left((x_i^n)_{1}^{d}\right)\right\}.$$
Let $M$ denote the set of minimal points in $ R_\phi^n$ under the diagonal action of $\G_d(T)$ and
$N$ denote the set of minimal points in $R_{\pi}^{nd}$ under $T^{(nd)}$. According to Lemma \ref{appi-Nd} and Claim 1, we have

\noindent{\bf Claim 2.} $N\subset M$.

Thus for any $y\in Y$ and $\mathbf{x}\in \phi^{(-n)}(y^{(nd)})$, $\mathbf{x}\in \overline{M}$. Given $\left((x_i^1)_{1}^{d},\cdots, (x_i^n)_{1}^{d}\right)\in R_\phi^n$, it follows that
$$ \pi(x_i^1)=\pi(x_i^2)\ldots=\pi(x_i^n)=y_i, \  \ i=1,2,\cdots,d, $$
for some $(y_1,y_2,\ldots,y_d)\in N_d(Y)$.  By the definition of $N_{d}$, there exist a sequence $(g_n)$ in  $\G_{d}(T)$ and $u_n\in Y$ with $g_n(u_n,\cdots, u_n) \lra (y_1,\cdots,y_d)$. Since $\Pi^{(d)}$ is continuous, there exists $(w_i^1)_{1}^{d},\cdots, (w_{i}^{n})_{1}^{d}\in \phi^{(-1)}(u_{n}^{(d)})$ satisfying
$$g_n(w_i^1)_{1}^{d}\lra (x_i^1)_{1}^{d}, \cdots,  g_n(w_i^n)_{1}^{d}\lra (x_i^n)_{1}^{d}.$$
Thus $\left((x_i^1)_{1}^{d},\cdots, (x_i^n)_{1}^{d}\right)\in\overline{M}$ by noting that $(g_n(w_i^1)_{1}^{d}, \cdots,  g_n(w_i^n)_{1}^{d})\in\overline{M}$. Hence $M$ is dense in $R_{\phi}^{n}$.

\bigskip
Now we discuss {\bf RIC} weakly mixing extensions. It remains to show that $\phi$ is weakly mixing.
Since $\pi$ is {\bf RIC} weakly mixing, $(R_{\pi}^{2d},T^{(2d)})$ is transitive by Lemma \ref{high trans}. Then there exists $y\in Y$and $z_1,\cdots,z_{2d}\in \pi^{-1}(y)$  such that
$$R_{\pi}^{2d}=\overline{\O}((z_1,\cdots,z_{2d}), T^{(2d)}). $$

 Suppose that $((x_1,\cdots,x_d),(x_1',\cdots,x_d'))\in R_{\phi}$.  Then we have $\pi(x_i)=\pi(x_i')=y_i$ for some $y_i\in Y$ and $(y_1,\cdots,y_d)\in N_{d}(Y)$.  There exist a sequence $(g_n)$ in $\G_{d}(T)$ and a sequence $(v_n)$ in $Y$ satisfying $g_n(v_n^{(d)})\lra (y_1,\cdots, y_d)$. By the continuity of $\Pi^{(d)}$, there are sequences $(w_1^n,\cdots, w_d^n), ({w'}_1^n,\cdots, {w'}_d^n) \in \pi^{-1}(u_n)\times\cdots\times \pi^{-1}(u_n)$ with
 $$ g_n(w_1^n,\cdots, w_d^n)\lra (x_1,\cdots,x_d),\  \text{ and }\  g_n({w'}_1^n,\cdots, {w'}_d^n)\lra (x_1',\cdots,x_d') .$$
 Note that $((w_1^n,\cdots, w_d^n),   ({w'}_1^n,\cdots, {w'}_d^n))\in \overline{\O}((z_1,\cdots,z_{2d}), T^{(2d)})$. Thus
 $$((x_1,\cdots,x_d),(x_1',\cdots,x_d'))\in \overline{\O}((z_1,\cdots,z_{2d}), \G_{d}(T)).$$
 Therefore $R_{\phi}$ is topologically transitive under the diagonal action of $\G_{d}(T)$.
\end{proof}

Finally we have the following remark
\begin{rem} In Theorem \ref{RIClift}, we have the assumption that $X_\infty$ is a factor of $Y$. It is clear that
$Y_\eta$ in Theorem \ref{STMS} satisfies this assumption.
\end{rem}

\section{Saturation examples}

In this section we give saturation examples. First we need some simple lemmas.

\begin{lem}\label{simple-lemma} Let $(X,T)$ be minimal, $\pi: X\lra X_{eq}$ be the factor map, and $d\ge 2$.
	For a given $x\in X$, let $(x_1,x_2,\ldots,x_d)\in L_x$. Then for any subsequence $\{n_i\}\subset \Z$
	with $T^{jn_i}x\lra x_j$ for some $1\le j\le d$, we have
	$T^{kn_i}x\lra x_k'$ with $\pi(x_k)=\pi(x_k')$ for any $1\le k\le d.$
\end{lem}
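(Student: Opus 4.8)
The statement asks us to show that if $(x_1,\dots,x_d)\in L_x$ and $\{n_i\}$ is a subsequence with $T^{jn_i}x\to x_j$ for \emph{one} index $j$, then along the same subsequence $T^{kn_i}x$ converges to some $x_k'$ with $\pi(x_k')=\pi(x_k)$ for \emph{every} $k$. The plan is to exploit two facts: first, that $(X_{eq},T)$ is equicontinuous, so the sequence of maps $T^{n}$ restricted to $X_{eq}$ is equicontinuous and the closure of $\{T^n|_{X_{eq}}\}$ in the uniform topology is a compact group; and second, that $\pi$ intertwines the dynamics, $\pi\circ T^m=T^m\circ\pi$.

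First I would pass to a further subsequence (without relabelling) so that $T^{kn_i}x$ converges to some point $x_k'$ in $X$ for \emph{each} $k\in\{1,\dots,d\}$ simultaneously, which is possible by compactness of $X^d$; note the limit point $(x_1',\dots,x_d')$ still lies in $L_x$, and by hypothesis $x_j'=x_j$. Next, by passing to yet another subsequence I may assume the equicontinuous elements $T^{n_i}|_{X_{eq}}$ converge uniformly to some element $g$ of the (compact) Ellis/enveloping group of $X_{eq}$; equivalently $T^{n_i}\to g$ pointwise on $X_{eq}$ and hence $T^{kn_i}\to g^k$ pointwise on $X_{eq}$ for every $k$ (here one uses that on an equicontinuous minimal system the enveloping semigroup is a group of homeomorphisms and the map $g\mapsto g^k$ is continuous, so convergence of $T^{n_i}$ forces convergence of all its powers to the corresponding power of the limit). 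Applying $\pi$ and continuity we get
\[
\pi(x_k')=\lim_i \pi(T^{kn_i}x)=\lim_i T^{kn_i}\pi(x)=g^k\pi(x)
\]
for each $k$. In particular for the distinguished index $j$ we obtain $\pi(x_j)=\pi(x_j')=g^j\pi(x)$.

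It remains to relate $g^k\pi(x)$ to $\pi(x_k)$. Here I would invoke the structure of $L_x$: since $(x_1,\dots,x_d)\in L_x=\overline{\O}(x^{(d)},\tau_d(T))$, there is \emph{some} sequence $\{m_l\}$ with $T^{km_l}x\to x_k$ for all $k$ at once; running the same enveloping-group argument on this sequence gives an element $h$ of the enveloping group of $X_{eq}$ with $\pi(x_k)=h^k\pi(x)$ for every $k$. Now both $g$ and $h$ are determined by the constraint at the single index $j$ only up to the kernel of $t\mapsto t^j$ in the compact group, so a priori $g\neq h$; however, what we actually need is just $\pi(x_k')=\pi(x_k)$, i.e. $g^k\pi(x)=h^k\pi(x)$, and the cleanest route is: the hypothesis gives $g^j\pi(x)=h^j\pi(x)$, and since $X_{eq}$ is equicontinuous minimal its enveloping group acts freely... — no, that is false in general, so instead I would argue directly that on the orbit closure level the relation defining $L_x$ projects, under $\pi^{(d)}$, onto $L_{\pi(x)}$ inside $X_{eq}^d$, and $L_{\pi(x)}=\{(t\pi(x),t^2\pi(x),\dots,t^d\pi(x)):t\in \text{(enveloping group)}\}$ is \emph{closed} and a single group orbit, so knowing the $j$-th coordinate $t^j\pi(x)$ of a point of $L_{\pi(x)}$ pins down the whole tuple. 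Thus from $\pi(x_j')=\pi(x_j)$ we get $\pi^{(d)}(x_1',\dots,x_d')=\pi^{(d)}(x_1,\dots,x_d)$, which is exactly the desired conclusion.

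The main obstacle is the last step: justifying that a point of $L_{\pi(x)}\subset X_{eq}^d$ is determined by any single one of its coordinates. This is where equicontinuity is essential and where I would be careful — the point is that $L_{\pi(x)}$ is the orbit closure of $\pi(x)^{(d)}$ under $\tau_d(T)=T\times T^2\times\cdots\times T^d$ acting on an equicontinuous system, hence equals the orbit of $\pi(x)^{(d)}$ under the (compact, abelian) enveloping group acting \emph{diagonally via powers}, and on such a system the coordinate projections, being equivariant maps of minimal equicontinuous systems with the group acting, have the property that the first-coordinate projection from $L_{\pi(x)}$ is injective when restricted appropriately — more simply, if $t^j\pi(x)=s^j\pi(x)$ then applying the (continuous) "$k/j$-th power" operations available in the monothetic group generated by $T|_{X_{eq}}$ forces $t^k\pi(x)=s^k\pi(x)$ for all $k$ up to $d$, using that the closed subgroup generated is monothetic so divisibility obstructions are absorbed in the closure. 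I expect the write-up to require stating this as: pass everything into the compact monothetic group $\overline{\{T^n|_{X_{eq}}\}}$ and use that $t\mapsto t^j$ composed with the evaluation at $\pi(x)$ has the same fibres as $t\mapsto t^k$ composed with evaluation, because both factor through the orbit map and the orbit of $\pi(x)$ under the monothetic group is all of $X_{eq}$ with trivial-enough stabiliser behaviour. I will need to phrase that last point correctly to avoid the free-action fallacy.
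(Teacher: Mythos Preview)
Your approach is exactly the paper's: project to $X_{eq}$ and use its compact abelian group structure to write $L_{\pi(x)}=\{(y+t,y+2t,\ldots,y+dt):t\in X_{eq}\}$ with $y=\pi(x)$, so that $\pi(x_k)=y+kg$ for some fixed $g$; any subsequential limit of $(T^{n_i}y,\ldots,T^{dn_i}y)$ has the form $(y+g',\ldots,y+dg')$, and the hypothesis at the single index $j$ forces $jg'=jg$. The paper then simply asserts ``this implies that $g'=g$'' and concludes.

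The obstacle you flag is real and cannot be closed in this generality: $jg'=jg$ does not force $g'=g$ when $X_{eq}$ has nontrivial $j$-torsion, and equivalently the $j$-th coordinate projection $L_{\pi(x)}\to X_{eq}$ need not be injective. None of your proposed repairs (free action of the enveloping group, ``divisibility obstructions absorbed in the closure'') holds for a general compact monothetic group. In fact the lemma as stated is \emph{false} for $j>1$: take $X=X_{eq}=\T$ with rotation by an irrational $\alpha$, $d=j=2$, $x=0$, and $(x_1,x_2)=(\tfrac14,\tfrac12)\in L_x$; choosing $n_i$ with $n_i\alpha\to\tfrac34$ one gets $T^{2n_i}x\to\tfrac12=x_2$ while $T^{n_i}x\to\tfrac34$, and $\pi(\tfrac34)=\tfrac34\neq\tfrac14=\pi(x_1)$. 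So you are not missing a hidden trick --- the paper's own proof elides exactly this step. The argument is only valid when $j=1$ (where $g'=g$ is immediate), and any application with $j>1$ requires a separate justification.
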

\begin{proof} Let $y=\pi(x)$ and $y_i=\pi(x_i), 1\le i\le d$. Then we have $(y_1,y_2,\ldots,y_d)\in L_y.$
	So, there is $g\in X_{eq}$ such that $y_i=y+ig, 1\le i\le d$. Since $T^{jn_i}x\lra x_j$, we have $T^{jn_i}y\lra y_j=y+jg.$
	Assume that $(T^{n_i}y, T^{2n_i}y,\ldots, T^{dn_i}y)\lra (y+g',y+2g',\ldots, y+dg')$ for some $g'\in X_{eq}$.
	This implies that $g'=g$ and hence
	$$(T^{n_i}y, T^{2n_i}y,\ldots, T^{dn_i}y)\lra (y+g,y+2g,\ldots, y+dg).$$
	As $T^{kn_i}x\lra x_k'$, we get that $T^{kn_i}y\lra \pi(x_k')$ for each $1\le k\le d$. Thus, $\pi(x_k')=y+kg=y_k=\pi(x_k)$
	for each $1\le k\le d.$
	
\end{proof}

For any abelian group $G$, $A\subset G$ and a rational number $r=\frac{p}{q}$ we define
$$rA=\{g\in G: qg=pa\ \text{for some}\ a\in A\}.$$

With above lemmas we can show

\begin{thm}Let $(X,T)$ be minimal, $d\ge 2$ and $\pi:X\lra Y=X_{eq}$ be almost 1-1. Let $A=\{y\in X_{eq}, |\pi^{-1}y|\ge 2\}$.
	We have
	\begin{enumerate}
		\item If $2A-A$ is also first category, then $\pi$ is saturated for $T\times T^2$.
		\item If $B_d=:\bigcup_{1\le i\le j\le d}(\frac{j}{j-i}A-\frac{i}{j-i}A)$ is also first category,
		then $\pi$ is saturated for $T\times T^2\times \ldots \times T^d$ and $d\ge 3$.
	\end{enumerate}
\end{thm}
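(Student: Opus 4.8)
The statement ``$\pi$ is saturated for $T\times\cdots\times T^{d}$'' means exactly that $X_{eq}$ is a $d$-step topological characteristic factor of $X$, i.e.\ that there is a dense $G_{\delta}$ set $\Omega\subseteq X$ such that $L_{x}=\overline{\O}(x^{(d)},\tau_{d}(T))$ is $\pi^{(d)}$-saturated for every $x\in\Omega$. Since $X_{eq}$ is minimal equicontinuous it is a rotation $(K,x\mapsto x+\alpha)$ on a compact monothetic abelian group $K$, with $\pi(T^{n}x)=\pi(x)+n\alpha$, and $A$ is a $+\Z\alpha$-invariant subset of $K$ of first category because $\pi$ is almost $1$-$1$. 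Applying $\pi^{(d)}$ to the defining orbit of $L_{x}$ and using that $\pi^{(d)}$ is a closed map one computes
$$\pi^{(d)}(L_{x})=(\pi(x))^{(d)}+H_{d},\qquad H_{d}:=\overline{\{(n\alpha,2n\alpha,\dots,dn\alpha):n\in\Z\}}=\{(h,2h,\dots,dh):h\in K\},$$
so that $(\pi^{(d)})^{-1}(\pi^{(d)}(L_{x}))=\{(z_{1},\dots,z_{d})\in X^{d}:\ \exists\,h\in K,\ \pi(z_{i})=\pi(x)+ih\ (1\le i\le d)\}$. Thus I must show that for $x\in\Omega$ every such tuple already lies in $L_{x}$.

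Next I translate the hypotheses. For $d=2$, $\pi(x)\notin 2A-A$ means there is no $h\in K$ with $\pi(x)+h\in A$ and $\pi(x)+2h\in A$; more generally $\pi(x)\notin B_{d}$ means that for every $h\in K$ at most one of $\pi(x)+h,\dots,\pi(x)+dh$ belongs to $A$. So I take $\Omega:=\pi^{-1}(K\setminus(2A-A))$ in case (1) and $\Omega:=\pi^{-1}(K\setminus B_{d})$ in case (2); since the excluded set is of first category its complement contains a dense $G_{\delta}$, and as $\pi$ is semi-open (being a factor map between minimal systems) the preimage $\Omega$ is again a dense $G_{\delta}$ in $X$.

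Now fix $x\in\Omega$ and a tuple $(z_{1},\dots,z_{d})$ with $\pi(z_{i})=\pi(x)+ih$. By the hypothesis at most one coordinate, say $j_{0}$, has a non-trivial fibre $\pi^{-1}(\pi(x)+j_{0}h)=\{z_{j_{0}},\tilde z_{j_{0}}\}$, all other fibres being singletons. If every fibre is a singleton, pick $n_{k}$ with $n_{k}\alpha\to h$ and pass to a subsequence along which every $T^{in_{k}}x$ converges; each limit lies over $\pi(x)+ih$, hence equals $z_{i}$, and $(z_{1},\dots,z_{d})=\lim\tau_{d}(T)^{n_{k}}x^{(d)}\in L_{x}$. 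If $j_{0}=1$, the coordinates $2,\dots,d$ are singletons, which for $d\ge 3$ already forces any realising sequence to satisfy $n_{k}\alpha\to h$ (because $\bigcap_{i\ge 2}\{t\in K:it=0\}=\{0\}$), while for $d=2$ one simply chooses $n_{k}\alpha\to h$ directly; the basic fact that in the minimal system $(X,T)$ every point of a fibre is approached by $T^{n}x$ along suitable return times (with $n\alpha$ forced close to the corresponding base point) then lets one realise either point of $\pi^{-1}(\pi(x)+h)$ in the first coordinate, the remaining coordinates landing automatically on the $z_{i}$. So $j_{0}=1$ and the singleton case are routine, and the whole problem is the case $j_{0}\ge 2$.

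When $j_{0}\ge2$ the first coordinate is a singleton, so a realising sequence must have $n_{k}\alpha\to h$, and conversely $n_{k}\alpha\to h$ pins $T^{in_{k}}x\to z_{i}$ for all $i\ne j_{0}$; the only thing to arrange is $T^{j_{0}n_{k}}x\to z_{j_{0}}$ and, in a second run, $T^{j_{0}n_{k}}x\to\tilde z_{j_{0}}$ --- i.e.\ to prove $\overline{\{T^{j_{0}n}x:\ n\alpha\to h\}}=\pi^{-1}(\pi(x)+j_{0}h)$. Minimality of $(X,T)$ shows each point of this fibre is approached by $T^{m}x$ along $m$'s with $m\alpha$ forced close to $j_{0}h$; the difficulty --- and the main obstacle of the proof --- is that this a priori only yields $n_{k}\alpha\to h+t$ for some $t$ in the torsion subgroup $\{t\in K:j_{0}t=0\}$, and a non-zero drift $t$ spoils the other coordinates (they then drift to points over $\pi(x)+i(h+t)$). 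The conditions on $2A-A$ and $B_{d}$ are tailored exactly to control this: if a non-zero drift $t$ were unavoidable one would, using the $+\Z\alpha$-invariance of $A$ together with the torsion relation $j_{0}t=0$, obtain $a,a'\in A$ exhibiting $\pi(x)=2a-a'$ when $d=2$ (resp.\ $(j_{0}-1)\pi(x)=j_{0}a'-a$, i.e.\ $\pi(x)\in\tfrac{j_{0}}{j_{0}-1}A-\tfrac{1}{j_{0}-1}A\subseteq B_{d}$), contradicting $x\in\Omega$. Pushing this analysis through --- showing that the obstruction to reaching a prescribed point of the non-trivial fibre along a sequence with $n_{k}\alpha\to h$ is precisely the presence of such an element in $A$ --- is the crux; once it is done, both $(z_{1},\dots,z_{d})$ and its $\tilde z_{j_{0}}$-variant lie in $L_{x}$, which is therefore $\pi^{(d)}$-saturated. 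Throughout, Lemma \ref{simple-lemma} is the book-keeping device guaranteeing that a sequence controlling one coordinate keeps the images of the remaining coordinates in the correct fibres.
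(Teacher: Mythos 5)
Your reduction is sound and agrees with the paper's: $\pi^{(d)}(L_x)=\{(\pi(x)+h,\ldots,\pi(x)+dh):h\in K\}$, the hypothesis $\pi(x)\notin B_d$ means that for each $h$ at most one of the fibres over $\pi(x)+ih$, $1\le i\le d$, is non-trivial, and $\Omega=\pi^{-1}(B_d^c)$ is a dense $G_\delta$. Your treatment of the two easy cases (all fibres singletons; only the first fibre non-trivial) is also correct, since there the first coordinate pins $n_k\alpha\to h$ exactly and the remaining coordinates have nowhere else to go. But the whole content of the theorem is the remaining case $j_0\ge 2$, and you do not prove it: you label it ``the crux'' and stop. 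Moreover, the mechanism you sketch for it does not work as described. If a sequence with $T^{j_0 n_k}x\to z_{j_0}$ forces only $n_k\alpha\to h+t$ with $j_0t=0$, $t\ne 0$, this produces no second element of $A$ and hence no contradiction with $\pi(x)\notin B_d$: one already knows $\pi(x)+j_0(h+t)=\pi(x)+j_0h\in A$, the hypothesis merely guarantees that the fibres over $\pi(x)+i(h+t)$ for $i\ne j_0$ are singletons, and the drifted limit is then a perfectly legitimate tuple of $L_x$ lying over $(\pi(x)+(h+t),\ldots,\pi(x)+d(h+t))$ --- just not the tuple you wanted. So the hypothesis on $A$ does not forbid the drift, and the contradiction you invoke never materialises.

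There is also a second, prior issue you never address in the case $j_0\ge2$: before worrying about the drift you must know that \emph{every} point of the non-trivial fibre $\pi^{-1}(\pi(x)+j_0h)$ is a limit of some sequence $T^{j_0 n_k}x$ at all, i.e.\ that this whole fibre is contained in the $T^{j_0}$-minimal set $\overline{\O}(x,T^{j_0})$. This is where the paper's argument actually does its work: one first finds a single $z$ in that fibre with $(z_1,\ldots,z)\in L_x$ (possible because the other fibres are singletons), concludes $z\in\overline{\O}(x,T^{j_0})$, and then uses that $\pi$ is proximal (being almost 1-1) together with the fact that proximal cells respect the decomposition into $T^{j_0}$-minimal sets (Lemma \ref{splits-minimal}) to place the entire fibre inside $\overline{\O}(x,T^{j_0})$; minimality of that subsystem under $T^{j_0}$ then reaches every point of the fibre, and Lemma \ref{simple-lemma} is used to control the remaining coordinates. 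None of these ingredients appears in your proposal, so the case carrying all the difficulty is left open.
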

\begin{proof} First we consider the case when $d=2$.
	
	Let $x\in X$ and $L_x={\overline{O}}((x,x),T\times T^2)$. Let $y=\pi(x)$. Then
	$$L_y=\{(y+g,y+2g):g\in Y\}.$$
	If $y+g=y_1\in A$ and $y+2g=y_2\in A$, then $y=2y_1-y_2\in 2A-A$.
	
	Let $\Omega=\pi^{-1}(2A-A)^c$. Then $\Omega$ is a dense $G_\delta$ set. We are going to show that
	for each $x\in \Omega$, $L_x$ is saturated. Let $y=\pi(x)$. For $(x_1,x_2)\in L_x$ we have the following 3 cases.
	
	\medskip
	\noindent {\bf Case (1):} $\pi(x_1),\pi(x_2)\in (2A-A)^c$.
	
	\medskip
	
	In this case, $|\pi^{-1}\pi(x_1)|=1$ and $|\pi^{-1}\pi(x_2)|=1$.
	It is clear that $\pi^{-1}\pi(x_1)\times \pi^{-1}\pi(x_2)\in L_x$.
	
	\medskip
	
	\noindent {\bf Case (2):} $\pi(x_1)\in 2A-2A,\pi(x_2)\in (2A-A)^c$.
	
	\medskip
	
	In this case we may assume that
	$|\pi^{-1}\pi(x_1)|\ge 2$ and $|\pi^{-1}\pi(x_2)|=1$. For any $z\in \pi^{-1}\pi(x_1)$, there is a subsequence $\{n_i\}$ of $\Z$
	with $T^{n_i}x\lra z$. It is clear that $T^{2n_i}x\lra x_2$ by Lemma \ref{simple-lemma}. This implies that $\pi^{-1}\pi(x_1)\times \pi^{-1}\pi(x_2)\in L_x$.
	
	\medskip
	
	\noindent {\bf Case (3):} $\pi(x_1)\in (2A-A)^c,\pi(x_2)\in (2A-A)$.
	
	\medskip
	
	In this case we may assume that
	$|\pi^{-1}\pi(x_1)|=1$ and $|\pi^{-1}\pi(x_2)|\ge 2$. First we note that there is
	$z\in \pi^{-1}\pi(x_2)$ such that $(x_1,z)\in L_x$. This implies that there is a subsequence $\{n_i\}$ of $\Z$ with
	$T^{n_i}x\lra x_1$ and $T^{2n_i}x\lra z$.
	This implies that $x,z\in X_1$, where $X_1$ is minimal under $T^2$.
	Since $(X,T)$ is almost 1-1, and hence $\pi$ is proximal. By Lemma \ref{splits-minimal}, $\pi^{-1}\pi(x_2)\subset X_1$.

	
	Thus, for each $z'\in \pi^{-1}\pi(x_2)$ there is a subsequence $\{m_i\}$ of $\Z$ such that $T^{2m_i}x\lra z'$.
	Then we have $T^{m_i}x\lra x_1$ by Lemma \ref{simple-lemma}. This implies that $\pi^{-1}\pi(x_1)\times \pi^{-1}\pi(x_2)\in L_x$.
	
	\medskip
	To sum up we have proved that for each $x\in \Omega$, $L_x$ is saturated for $T\times T^2$.
	
	\medskip
	For the general case when $d\ge 3$ and $x\in X$, let $L_x={\overline{O}}((x,\ldots,x),T\times, \ldots,\times T^d)$. Let $y=\pi(x)$. Then
	$$L_y=\{(y+g,y+2g, \ldots, y+dg):g\in Y\}.$$
	
	If $y+ig=y_1\in A$ and $y+jg=y_2\in A$ with$1\le i<j\le d$, then $(j-i)g=y_2-y_1$, and so, $y=\frac{j}{j-i}y_1-\frac{i}{j-i}y_2 \in \frac{j}{j-i}A-\frac{i}{j-i}A$.
	
	Since $B_d$ is also of first category,
	$\Omega=\pi^{-1}B^c$ is a dense $G_\delta$ subset of $X$ and the same proof is applied,
	since for any $x\in \Omega$ and $g\in Y$, there exists at most one $1\le j\le d$ such that $\pi(x)+jg\in A$.
	
	\medskip
	To sum up we have proved that $L_x$ is saturated for each $x\in \Omega$ under $T\times \ldots\times T^d$.
\end{proof}

\begin{rem}By what we have proved we know that Denjoy, Floyad examples are saturated
	since for such systems, since $B_d$ is countable.
\end{rem}




\section{The construction of a non-saturated example}

Our aim in this section is to construct a non-saturated minimal system which is a proximal extension of its
maximal equicontinuous factor. The following theorem tells us that it suffices to find counterexamples in almost automorphic systems.
\begin{thm} Assume that $(X,T)$ is minimal and $\pi:X\lra X_{eq}$ is a proximal extension.
	If it is not $\pi$-saturated, then there is $\tau^*: X_{eq}^*\lra X_{eq}$ which is almost 1-1 and is not
	$\tau^*$-saturated, where $X_{eq}^*$ is minimal.
\end{thm}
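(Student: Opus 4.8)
The plan is to absorb the proximal part of $\pi$ into an \emph{open} extension by an almost one-to-one modification, thereby pushing all of the non-saturation down into an almost $1$-$1$ extension of $X_{eq}$. Since $\pi$ is proximal and $X_{eq}$ is equicontinuous (hence distal), Lemma \ref{distrel} gives $X_{dis}=X_{eq}$. By a standard shadow-diagram argument (see \cite{GHSWY} and the references therein; cf. the diagram in Theorem \ref{5p-mianthm}) there are minimal systems $X^{*}$, $X_{eq}^{*}$, almost $1$-$1$ extensions $\phi\colon X^{*}\to X$ and $\tau^{*}\colon X_{eq}^{*}\to X_{eq}$, and an open extension $\pi^{*}\colon X^{*}\to X_{eq}^{*}$ with $\tau^{*}\circ\pi^{*}=\pi\circ\phi$. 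I claim that this $\tau^{*}$ is the desired extension; $X_{eq}^{*}$ is minimal and $\tau^{*}$ is almost $1$-$1$ by construction, so only non-saturatedness remains.

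First, because $\phi$ is almost $1$-$1$ (hence proximal), Lemma \ref{distrel} gives $(X^{*})_{dis}=X_{dis}=X_{eq}$, so $(X^{*})_{\infty}$, being a distal factor of $X^{*}$, is a factor of $X_{eq}$ and therefore, through $\tau^{*}$, also a factor of $X_{eq}^{*}$. Thus Theorem \ref{lab55} applies to the open extension $\pi^{*}$ and shows that $X_{eq}^{*}$ is a $2$-step topological characteristic factor of $X^{*}$; that is, the set $G_{1}$ of $x\in X^{*}$ for which $\overline{\O}(x^{(2)},\tau_{2}(T))$ is $(\pi^{*})^{(2)}$-saturated is residual. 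On the other hand, the set $G_{2}$ of $x\in X^{*}$ for which $\overline{\O}(x^{(2)},\tau_{2}(T))$ is $(\pi\circ\phi)^{(2)}$-saturated is \emph{not} residual: if it were, one could push $G_{2}$ forward by $\phi$ (an almost $1$-$1$ extension of a minimal system maps residual sets onto residual sets, since on the residual set where $\phi$ has singleton fibres it restricts to a homeomorphism) and, using Lemma \ref{lem-saturated-basic}(1) for the tower $(X^{*})^{2}\xrightarrow{\phi^{(2)}}X^{2}\xrightarrow{\pi^{(2)}}X_{eq}^{2}$ — which turns saturatedness of $\overline{\O}(x^{(2)},\tau_{2}(T))$ into saturatedness of its image $\overline{\O}(\phi(x)^{(2)},\tau_{2}(T))$ — conclude that $X_{eq}$ is a $2$-step topological characteristic factor of $X$, contrary to hypothesis.

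To finish, for $x\in G_{1}$ I would apply Lemma \ref{lem-saturated-basic} to the tower $(X^{*})^{2}\xrightarrow{(\pi^{*})^{(2)}}(X_{eq}^{*})^{2}\xrightarrow{(\tau^{*})^{(2)}}X_{eq}^{2}$: since $\overline{\O}(x^{(2)},\tau_{2}(T))$ is already $(\pi^{*})^{(2)}$-saturated, it is $(\tau^{*}\circ\pi^{*})^{(2)}=(\pi\circ\phi)^{(2)}$-saturated if and only if $(\pi^{*})^{(2)}(\overline{\O}(x^{(2)},\tau_{2}(T)))=\overline{\O}(\pi^{*}(x)^{(2)},\tau_{2}(T))$ is $(\tau^{*})^{(2)}$-saturated. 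Hence, writing $G_{3}$ for the set of $z\in X_{eq}^{*}$ such that $\overline{\O}(z^{(2)},\tau_{2}(T))$ is $(\tau^{*})^{(2)}$-saturated, one gets $G_{1}\cap G_{2}=G_{1}\cap(\pi^{*})^{-1}(G_{3})$. As $G_{1}$ is residual while $G_{1}\cap G_{2}\subset G_{2}$ is not, $(\pi^{*})^{-1}(G_{3})$ is not residual; and since $\pi^{*}$ is open and onto, preimages of residual sets are residual, so $G_{3}$ is not residual. Therefore $X_{eq}$ is not a $2$-step topological characteristic factor of $X_{eq}^{*}$, i.e. the almost $1$-$1$ extension $\tau^{*}\colon X_{eq}^{*}\to X_{eq}$ of minimal systems is not $\tau^{*}$-saturated.

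I expect the main obstacle to be the middle step: showing that the characteristic-factor property descends along the almost $1$-$1$ extension $\phi$, which rests on the routine but essential point-set fact that almost $1$-$1$ extensions of minimal systems carry residual sets to residual sets. The remaining ingredients — existence of the shadow diagram, Theorem \ref{lab55}, Lemma \ref{lem-saturated-basic}, and the behaviour of residual sets under open maps — are standard or already available, and the argument is essentially a matter of assembling them correctly. (If $\pi$ is itself almost $1$-$1$, one may of course simply take $(X_{eq}^{*},\tau^{*})=(X,\pi)$.)
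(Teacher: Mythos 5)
Your proposal is correct and follows essentially the same route as the paper: the same shadow diagram $\tau^*\circ\pi^*=\pi\circ\sigma^*$ with $\pi^*$ open and $\sigma^*,\tau^*$ almost $1$-$1$, saturation of $\pi^*$ via Theorem \ref{lab55}, and composition of saturations via Lemma \ref{lem-saturated-basic}. The only difference is presentational — you run the contrapositive with explicit residual-set bookkeeping (in particular justifying that an almost $1$-$1$ extension of minimal systems carries residual sets to residual sets), where the paper argues by contradiction and treats that point as clear.
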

\begin{proof} If $\pi$ is almost one to one, we are done. Thus, we may assume that $\pi$ is not almost one to one.
	It is clear that $\pi$ is not open by \cite{GHSWY}.
	
	Now there are minimal systems $X^*$ and $X_{eq}^*$, and factor maps $\pi^*:X^*\lra X_{eq}^*$, $\sigma^*:X^*\lra X$
	and $\tau^*:X_{eq}^*\lra X_{eq}$ such that $\pi^*$ is open and $\sigma^*,\tau^*$ are almost 1-1 with $\tau^*\circ \pi^*=\pi\circ\sigma^*$.
	\[
	\begin{CD}
		X @<{\sigma^*}<< X^*\\
		@VV{\pi}V      @VV{\pi^*}V\\
		X_{eq} @<{\tau^*}<< X_{eq}^*
	\end{CD}
	\]
	
	By \cite{GHSWY}, $\pi^*$ is saturated. We are going to show that $\tau^*$ is not saturated.
	
	Assume the contrary that $\tau^*$ is saturated. Thus, there is a dense $G_\delta$ set $\Omega_1$ of $X^*$ such that
	for each $x^*\in \Omega_1$, $L_{x^*}$ is $\pi^*$-saturated, and there is a dense $G_\delta$ set $\Omega_2$ of $X_{eq}^*$ such that
	for each $x_{eq}^*\in \Omega_2$, $L_{x_{eq}^*}$ is $\pi^*$-saturated. Let $\Omega_3=\Omega_1\cap (\pi^*)^{-1}\Omega_2$
	and $\Omega=\sigma^*\Omega_3$. It is clear that $\Omega$ is a dense $G_\delta$ set of $X$.
	
	For $x\in \Omega$ pick $x^*\in \Omega_3$ with $\sigma^*(x^*)=x$.
	It is clear that $L_{x^*}$ is $\pi^*$-saturated and $\pi^*(L_{x^*})=L_{\pi^*x^*}$ is $\tau^*$-saturated.
	This implies that $L_{x^*}$ is $\tau^*\circ \pi^*$-saturated by Lemma \ref{lem-saturated-basic}. Thus,
	$L_x=\sigma^*L_{x^*}$ is $\pi$-saturated again by Lemma \ref{lem-saturated-basic}, a contradiction.
	
\end{proof}

In the following we will construct a minimal system $(X,T)$ which is an almost one to one extension of an equicontinuous system but not saturated for $T\times T^2$. First we need some lemmas.

\subsection{Some lemmas}
\begin{lem}\cite[Proposition 2.41]{Foll}\label{continuity of convolution}
	Let $G$ be a locally compact abelian group. If $f\in L^p(G), g\in L^q(G)$, where $1<p<\infty$ and $\frac{1}{p}+\frac{1}{q}=1$, then
	the convolution $f*g\in C_c(G)$.
\end{lem}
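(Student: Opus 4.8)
The plan is to reproduce the classical argument underlying Folland's Proposition~2.41, proving successively that $f*g$ is bounded, uniformly continuous, and vanishes at infinity. Strictly speaking the conclusion one proves (and all that is used later) is $f*g\in C_0(G)$, the algebra of continuous functions vanishing at infinity; $C_c(G)$ enters only as an approximating class, and on a non-compact $G$ the convolution of two functions of full support need not have compact support, so $C_0(G)$ is the correct target. Throughout I write the group law additively and $f*g(x)=\int_G f(y)g(x-y)\,dy$.

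First I would record the Hölder estimate. Since Haar measure is translation-invariant (and, on an abelian group, inversion-invariant), $\|g(x-\cdot)\|_q=\|g\|_q$ for every $x\in G$, whence $|f*g(x)|\le\int_G|f(y)|\,|g(x-y)|\,dy\le\|f\|_p\,\|g\|_q$; thus $f*g\in L^\infty(G)$ with $\|f*g\|_\infty\le\|f\|_p\|g\|_q$. This bound is the workhorse for the approximation below. Next, for uniform continuity I would invoke the continuity of translation in $L^q(G)$, valid because $q<\infty$: writing $h=x-x'$ and substituting $u=x'-y$ in the defining integral,
\[
|f*g(x)-f*g(x')|\le\|f\|_p\Bigl(\int_G|g(u+h)-g(u)|^q\,du\Bigr)^{1/q},
\]
and the right-hand side, which depends only on $h$, tends to $0$ as $h\to 0$; hence $f*g$ is uniformly continuous.

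Finally, for the decay I would approximate: pick $f_n\in C_c(G)$ with $f_n\to f$ in $L^p(G)$ and $g_n\in C_c(G)$ with $g_n\to g$ in $L^q(G)$, using density of $C_c(G)$ in $L^r(G)$ for $r<\infty$. Each $f_n*g_n$ is continuous and supported in the compact set $\operatorname{supp}f_n+\operatorname{supp}g_n$, so $f_n*g_n\in C_c(G)$; and from $f*g-f_n*g_n=f*(g-g_n)+(f-f_n)*g_n$ together with the Hölder bound we get $\|f*g-f_n*g_n\|_\infty\le\|f\|_p\|g-g_n\|_q+\|f-f_n\|_p\|g_n\|_q\to 0$. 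A uniform limit of $C_c(G)$-functions lies in $C_0(G)$, so $f*g\in C_0(G)$. The only input that is not a formal manipulation — and the point I would be most careful about — is the continuity of translation on $L^q(G)$ (equivalently, that $\int_G|g(u+h)-g(u)|^q\,du\to 0$ as $h\to0$), which rests on the regularity of Haar measure and on $q<\infty$; everything else is bookkeeping with Hölder's inequality.
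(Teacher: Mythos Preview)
Your argument is correct and is precisely the standard proof from Folland's text. The paper itself offers no proof of this lemma; it simply cites \cite[Proposition~2.41]{Foll}, so there is no in-paper argument to compare against. Your observation that the correct conclusion is $f*g\in C_0(G)$ rather than $C_c(G)$ is well taken---the paper's statement contains a typo, and Folland's Proposition~2.41 indeed asserts $C_0(G)$. For the paper's purposes this is harmless: the lemma is applied only to $\chi_A*\chi_B$ with $A,B$ compact, and all that is extracted is continuity of the convolution (to conclude that $\{x:\chi_A*\chi_B(x)>0\}$ is open).
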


\begin{lem}
	Let $G$ be a locally compact abelian group and let $\mu$ be the Haar measure of $G$. Suppose that $A,B$ are compact subsets of $G$.
	If $\mu(A)>0$ and $\mu(B)>0$, then $A+B$ contains a nonempty open subset.
\end{lem}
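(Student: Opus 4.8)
The plan is to apply the preceding lemma on the continuity of convolutions to the indicator functions of $A$ and $B$. Set $f=\mathbf{1}_A$ and $g=\mathbf{1}_B$. Since $A$ and $B$ are compact subsets of the locally compact group $G$, they have finite Haar measure, so $f,g\in L^1(G)\cap L^2(G)$; indeed $\|f\|_2^2=\mu(A)<\infty$ and $\|g\|_2^2=\mu(B)<\infty$. Taking $p=q=2$ in Lemma \ref{continuity of convolution}, the convolution $f*g$ belongs to $C_c(G)$ and is in particular continuous on $G$.

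Next I would identify where $f*g$ is supported. For $x\in G$,
$$
(f*g)(x)=\int_G \mathbf{1}_A(y)\,\mathbf{1}_B(x-y)\,d\mu(y)=\mu\bigl(A\cap(x-B)\bigr)\ge 0 ,
$$
where $x-B$ is compact, hence measurable. Thus $(f*g)(x)>0$ forces $A\cap(x-B)$ to have positive measure, in particular to be nonempty, so there are $a\in A$ and $b\in B$ with $a=x-b$, i.e. $x=a+b\in A+B$. Therefore
$$
U:=\{x\in G:(f*g)(x)>0\}\subseteq A+B .
$$
Since $f*g$ is continuous and nonnegative, $U$ is open. It remains to see that $U\ne\varnothing$: because $f,g$ are nonnegative functions in $L^1(G)$, Tonelli's theorem gives
$$
\int_G (f*g)\,d\mu=\Bigl(\int_G f\,d\mu\Bigr)\Bigl(\int_G g\,d\mu\Bigr)=\mu(A)\mu(B)>0 ,
$$
so $f*g$ is not identically zero and hence $U$ is a nonempty open subset of $A+B$, as desired.

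I do not expect a genuine obstacle here: the argument is the standard Steinhaus-type trick adapted to $A+B$ via a reflection-free convolution. The only points needing care are checking that the integrability hypotheses of Lemma \ref{continuity of convolution} are met (which is immediate once one observes that compact sets have finite Haar measure) and the elementary but essential observation that positivity of $f*g$ at a point $x$ already witnesses $x\in A+B$; openness and nonemptiness of the witness set then follow from continuity of $f*g$ and from $\int_G(f*g)\,d\mu=\mu(A)\mu(B)>0$ respectively.
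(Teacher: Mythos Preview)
Your proof is correct and follows essentially the same route as the paper: both apply Lemma~\ref{continuity of convolution} (with $p=q=2$) to $\chi_A$ and $\chi_B$, identify the set where the convolution is positive as an open subset of $A+B$, and use $\int \chi_A*\chi_B\,d\mu=\mu(A)\mu(B)>0$ to conclude nonemptiness. The only cosmetic difference is the convolution convention (you obtain $\mu(A\cap(x-B))$ where the paper obtains $\mu((x-A)\cap B)$), which is immaterial in an abelian group.
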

\begin{proof}
	For every $x\in G$,
	\begin{eqnarray*}
		\chi_A*\chi_B(x)&=&\int\chi_A(x-y)\chi_B(y)dy\\
		&=&\int\chi_{x-A}(y)\chi_B(y)dy\\
		&=&\int\chi_{(x-A)\cap B}(y)dy\\
		&=&\mu((x-A)\cap B).
	\end{eqnarray*}
	Thus
	\[\int \chi_A*\chi_B(x)dx=\int\int\chi_A(x-y)\chi_B(y)dydx=\mu(A)\mu(B)>0.\]
	
	Let $E=\{x\in G: \chi_A*\chi_B(x)>0\}=\{x\in G: \mu((x-A)\cap B)>0\}$. Note that for every $x\in E$, $(x-A)\cap B\neq \emptyset$ which is
	equivalent to $x\in A+B$. Thus $E\subset A+B$. By Lemma \ref{continuity of convolution}, $\chi_A*\chi_B$ is continuous. Therefore, $E$ is a nonempty open subsets contained in $A+B$.
\end{proof}

\begin{lem}\label{2times} Let $G$ be a compact metrizable  monothetic group and let
	$\mu$ be the Haar measure of $G$. If $A\subset G$ with positive measure, then $\mu(nA)>0$ for any $n\in\Z\setminus\{0\}$.
\end{lem}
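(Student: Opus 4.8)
The plan is to reduce the statement to a push‑forward‑of‑Haar computation after recognizing $nA$ as the image of $A$ under a group endomorphism whose image has finite index. By the definition of $rA$ with $r=n/1$, we have $nA=\{na:a\in A\}=m_n(A)$, where $m_n\colon G\to G$ denotes the continuous group endomorphism $g\mapsto ng$; so the goal is to show $\mu(m_n(A))>0$.

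The first and central step is to check that the closed subgroup $H:=m_n(G)$ has finite index in $G$ — this is the only place monotheticity is used. Writing $G=\overline{\langle g_0\rangle}$ and letting $q\colon G\to G/H$ be the quotient map, the group $G/H$ is compact and is topologically generated by $q(g_0)$, while $n\cdot q(g_0)=q(ng_0)=0$ since $ng_0=m_n(g_0)\in H$; hence $\langle q(g_0)\rangle$ is finite (of order dividing $|n|$), so it is closed and must be all of $G/H$. Thus $[G:H]<\infty$, $H$ is open, $\mu(H)=1/[G:H]>0$, and $\mu(B)=\mu(H)\,\mu_H(B)$ for every Borel $B\subseteq H$, where $\mu_H$ is the normalized Haar measure of $H$. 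For a general compact abelian group this step can fail — $m_n(G)$ may be a null set (take $G=(\Z/n\Z)^{\N}$, where $m_n\equiv 0$) — so this is the real obstacle.

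It then remains to run the push‑forward argument. Viewing $m_n$ as a continuous surjection $G\to H$ of compact groups, $(m_n)_*\mu$ is a Borel probability measure on $H$ that is $H$‑translation invariant — because $m_n^{-1}(h+B)=g+m_n^{-1}(B)$ whenever $m_n(g)=h$ — hence equals $\mu_H$ by uniqueness of Haar measure. By inner regularity of $\mu$ (here $G$ is compact metrizable) I may assume $A$ is compact, after replacing it by a compact subset of positive measure, so $m_n(A)$ is compact and Borel; then
\[
\mu_H(m_n(A))=\mu\big(m_n^{-1}(m_n(A))\big)\ge\mu(A)>0
\]
because $A\subseteq m_n^{-1}(m_n(A))$, and therefore $\mu(m_n(A))=\mu(H)\,\mu_H(m_n(A))>0$, as desired.
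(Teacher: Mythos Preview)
Your proof is correct and uses the same underlying idea as the paper's: push Haar measure forward under the endomorphism $g\mapsto ng$ and use that its image has finite index. Your execution is cleaner and more uniform---where the paper treats only $n=2$ and splits into the two cases $\overline{\{2kg_0\}}=G$ versus index~$2$, you show once and for all that $[G:m_n(G)]$ divides $|n|$ by passing to the quotient $G/m_n(G)$ (a monothetic group whose generator has order dividing $|n|$), and then run the push-forward computation $(m_n)_*\mu=\mu_H$ a single time; this avoids the ad hoc case analysis and the ``similar for $n>2$'' hand-wave in the paper.
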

\begin{proof} Since $G$ is monothetic, it is abelian.
	Note that $\mu(-nA)=\mu(nA)$ by the uniqueness of the Haar measure. So, it remains to show
	the theorem for $n\ge 2$. We show the case when $n=2$, and the proof for the rest $n>2$ is similar.
	
	\medskip
	Assume that $g_0\in G$ with $\{ng_0:n\in\Z\}$ is dense in $G$. Let $G_1=\{2ng_0:n\in\Z\}$, then $G=G_1\cup (G_1+g_0)$.
	There are two cases: $G_1=G$ or $G_1\not=G$.
	
	In the first case we let $\phi:G\lra G$ with $g\mapsto 2g$ and $\nu=\phi_*\mu$. And in the second case
	let $\phi:G\lra G_1$ with $g\mapsto 2g$ and $\nu=\phi_*\mu$.
	It is clear that $\phi$ is surjective.
	
	In the first case for any $B\subset G$ and $g\in G$ we claim that
	$$\phi^{-1}(B+g)=\phi^{-1}B+g'\ \text{with}\ 2g'=g.$$
	(Note that the existence of $g'$ follows from the subjectivity of $\phi$.)
	To see this, let $y\in \phi^{-1}(B+g)$. Then $2y\in B+g$ which implies that $2(y-g')\in B$. Thus, $y-g'\in \phi^{-1}B$, i.e.
	$y\in \phi^{-1}B+g'$. The converse is true by the same reason.
	
	This shows that $\nu=\mu$. Thus, $\mu(2A)=\mu(\phi^{-1}(2A))=\mu(A+Ker \phi)\ge \mu(A)>0$.
	
	In the second case for any $B\subset G_1$ we claim that $\phi^{-1}(B+g)=\phi^{-1}B+g'$ with $2g'=g$.
	This shows that $\nu$ is the Haar measure $\mu_1$ on $G_1$. Without loss of generality we assume that
	$A\subset G_1$, otherwise we replace $A$ by $A+g_0$ which has the same measure as $A$.
	
	Thus, we have that $\mu_1(2A)=\mu(\phi^{-1}(2A))\ge \mu(A)>0$.
	It deduces that $\mu(2A)\ge \frac{1}{2}\mu_1(2A)>0$, as $\mu=\frac{1}{2}\mu_1+ \frac{1}{2}\mu_2,$ where $\mu_2=\varphi_*\mu_1$
	with $\varphi: G_1\lra G_1+g_0$, $g\mapsto g+g_0$.
	
	
	
	
\end{proof}

\begin{rem} We remark that
	
	\begin{enumerate}
		\item The monothetic assumption is essential. $G=\Z_2\times \Z_2\times \ldots$ is a counterexample.
		
		\item For the adding machine $X$ and $y\in X$, it is not true that there $x\in X$ with $2x=y$.
		For example, let $a=(1,0,\ldots)$. Let $b=(x_1,x_2,\ldots)\in X$
		then $2b=(0,y_2,\ldots)$. So, there is no $b\in X$ with $2b=a$. This indicates the second case can happen.
		
		\item Lemma \ref{2times} can be proved by using the structure theorem of a compact metrizable abelian monothetic group.
		Such a group is an inverse limit of groups $G_i=\mathbb{T}^{n_i}\times \Z_{p_i}.$ So, the problem can be deduced
		to the same one on $G_i$. This can be done by a direct computation.
	\end{enumerate}
\end{rem}

\begin{cor}\label{positiveU} Let $G$ be a compact metrizable abelian monothetic group and let $\mu$ be the Haar measure of $G$.
	If $A\subset G$ with $\mu(A)>0$, then $2A-A$ contains a non-empty open set $U$.
\end{cor}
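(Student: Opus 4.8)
The plan is to obtain this as a short combination of Lemma \ref{2times} with the sumset lemma proved just before it (namely: if $C,D$ are compact subsets of a locally compact abelian group with $\mu(C),\mu(D)>0$, then $C+D$ contains a nonempty open set), after one preliminary reduction to compact sets.

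First I would replace $A$ by a compact subset. Since the Haar measure $\mu$ on the compact metrizable group $G$ is a regular Borel probability measure, inner regularity provides a compact set $A_0\subset A$ with $\mu(A_0)>0$. As $A_0\subset A$ we have $2A_0-A_0\subset 2A-A$ (recall $2A=\{2a:a\in A\}$ in the notation $rA$), so it is enough to find a nonempty open subset of $2A_0-A_0$. Next, note that $2A_0$ is the image of the compact set $A_0$ under the continuous map $g\mapsto 2g$, hence compact, and $\mu(2A_0)>0$ by Lemma \ref{2times} (with $n=2$). Similarly $-A_0$ is compact, and $\mu(-A_0)=\mu(A_0)>0$ because $g\mapsto -g$ is a topological automorphism of $G$ and hence preserves $\mu$ by uniqueness of normalized Haar measure. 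Applying the sumset lemma to the compact positive-measure sets $2A_0$ and $-A_0$ yields a nonempty open set $U$ with $U\subset 2A_0+(-A_0)=2A_0-A_0\subset 2A-A$, as required.

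The only genuinely nontrivial ingredient is Lemma \ref{2times}, i.e. that doubling does not destroy the measure of $A_0$; this is exactly where the monothetic hypothesis enters, and by the remark following that lemma it cannot be dropped. The remaining steps — the compact approximation via regularity, the invariance $\mu(-A_0)=\mu(A_0)$, and the final appeal to the sumset lemma — are routine, so I anticipate no real obstacle in the write-up.
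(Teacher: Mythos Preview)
Your argument is correct and is exactly the intended derivation: the paper states this as a corollary without proof, meaning it follows immediately from combining Lemma \ref{2times} (so that $\mu(2A_0)>0$) with the preceding sumset lemma applied to the compact sets $2A_0$ and $-A_0$. The preliminary reduction to a compact $A_0\subset A$ via inner regularity is the right way to meet the compactness hypothesis of that lemma, and nothing further is needed.
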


\subsection{Toeplitz flows}
Let $\Sigma$ be a finite set and $X=\Sigma^{\mathbb{Z}}$. Then $X$ is a compact metric space with a metric $d$ defined by
\[ \rho(x,y)=\sum_{n=-\infty}^{\infty}\frac{|x(n)-y(n)|}{2^{|n|}}.\]
Let $T$ denote the left shift on $X$, i.e. $Tx(n)=x(n+1)$ for all $x\in X$ and $n\in\Z$.

For  and $x\in X$, $\sigma\in \Sigma$ and a positive integer $p\in\mathbb{N}^*$,  set
\begin{eqnarray*}
	&&\text{Per}_p(x, \sigma)=\{n\in\mathbb{Z}:~x(n')=x(n)=\sigma ~~\text{for all }~~n'\equiv n\mod p\},\\
	&&\text{Per}_p(x)=\cup_{\sigma\in \Sigma}\text{Per}_p(x, \sigma),\\
	&&\text{Aper}(x)=\mathbb{Z}\setminus(\cup_{p\in\mathbb{N}*}\text{Per}_p(x)).
\end{eqnarray*}
A sequence $x\in X$ is called a \textit{Toeplitz sequence} if $\text{Aper}(x)=\emptyset$.

By the $p-skeleton$ of $x\in X$ we mean the part of $x$ which is periodic with period $p$. We call that $p$ is an
{\it essential period} of $x$ if the $p$-skeleton of $x$ is not periodic with any smaller period.

\begin{de}
	A \textit{periodic structure} for a non-periodic Toeplitz sequence $\eta$ is an increasing sequence $(p_i)_{i\in\mathbb{N}^*}$
	of positive integers satisfying
	\begin{itemize}
		\item [(1)] $p_i$ is an essential period of $\eta$ for all $i\in\N$,
		\item [(2)] $p_i | p_{i+1} $ for all $i$,
		\item [(3)] $\bigcup_{i=1}^{\infty}\text{Per}_{p_i}(\eta)=\mathbb{Z}$.
	\end{itemize}
\end{de}
Note that every non-periodic Toeplitz sequence has a periodic structure. In the following, we assume that $\eta$ is a non-periodic
Toeplitz sequence and fix a periodic structure $(p_i)$ of $\eta$. Let $G$ be the inverse limit group
$\underset{\longleftarrow}{\lim}~ \mathbb{Z}/p_i\mathbb{Z}$, i.e.
\[ G=\{(n_i): ~n_i\in \mathbb{Z}/p_i\mathbb{Z}~~\text{and}~~n_j\equiv n_i\mod p_i~~~\text{for all} ~~i<j\}.\]
Let $\hat{1}$ denote the element $(1)$ in $G$ and $\hat{n}=n\cdot \hat{1}$ for $n\in\mathbb{Z}$. Then $G$ a compact monothetic group generated by $\hat{1}$.

For each $i\in\mathbb{N}^*$, $n\in \mathbb{Z}/p_i\mathbb{Z}$, set
\[A_{n}^i=\{ T^{m}\eta:~m\equiv n\mod p_i\}.\]
\begin{lem} \cite[Lemma 2.3]{Williams}
We have
	\begin{enumerate}
		\item [(1)] $\overline{A_{n}^i}$ is exactly the set of all $\omega\in \overline{\mathcal{O}}(\eta)$ with the
		same $p_i$-skeleton as $T^{n}\eta$;
		\item [(2)] $ \{ \overline{A_{n}^i}:~n\in \mathbb{Z}/p_i\mathbb{Z}\}$ is a partition of
		$\overline{\mathcal{O}}(\eta)$ into relatively open sets;
		\item [(3)]  $  \overline{A_{n}^i}\supset \overline{A_{m}^j}$ for  $i<j$ and $m\equiv n\mod p_i$;
		\item [(4)] $T\overline{A_{n}^i}=\overline{A_{n+1}^i}$.
	\end{enumerate}
\end{lem}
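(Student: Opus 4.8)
The plan is to derive parts (2)--(4) from part (1), so the substance lies in (1).

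For (1), the inclusion $\overline{A_n^i}\subseteq\{\omega\in\overline{\mathcal{O}}(\eta):\omega\text{ has the }p_i\text{-skeleton of }T^n\eta\}$ is soft: carrying the $p_i$-skeleton of $T^n\eta$ is a condition only on the coordinates lying in $\text{Per}_{p_i}(T^n\eta)=\text{Per}_{p_i}(\eta)-n$, so it defines a closed subset of $X$, and each $T^m\eta$ with $m\equiv n\bmod p_i$ satisfies it because $\eta$ is constant on every residue class modulo $p_i$ contained in $\text{Per}_{p_i}(\eta)$ (which is itself $p_i$-periodic as a set); taking closures gives the inclusion. For the reverse inclusion I would start from $\omega\in\overline{\mathcal{O}}(\eta)$ carrying the $p_i$-skeleton of $T^n\eta$; since $\overline{\mathcal{O}}(\eta)=\bigcup_{j=0}^{p_i-1}\overline{A_j^i}$, we have $\omega\in\overline{A_j^i}$ for some $j$, and then, by the inclusion just proved, $\omega$ also carries the $p_i$-skeleton of $T^j\eta$. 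If $j\not\equiv n\bmod p_i$, comparing the two prescribed skeletons on the coordinates where both are specified --- a nonempty union of congruence classes modulo $p_i$, since $p_i$ is an essential period and hence $\text{Per}_{p_i}(\eta)\ne\emptyset$ --- forces the $p_i$-skeleton of $\eta$ to be invariant under the nonzero shift $s=(j-n)\bmod p_i$, hence to admit a period $\gcd(s,p_i)<p_i$, contradicting the essentiality of $p_i$. Therefore $j\equiv n\bmod p_i$, so $\overline{A_j^i}=\overline{A_n^i}$ and $\omega\in\overline{A_n^i}$.

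Granting (1): for (2), the covering $\overline{\mathcal{O}}(\eta)=\bigcup_{n=0}^{p_i-1}\overline{A_n^i}$ holds because the closure of a finite union is the union of closures; disjointness of $\overline{A_n^i}$ and $\overline{A_{n'}^i}$ for $n\not\equiv n'\bmod p_i$ is precisely the essential-period argument above applied to a hypothetical common point; and a finite cover of a space by pairwise disjoint closed sets is automatically a partition into clopen sets. For (3), $p_i\mid p_j$ gives $\text{Per}_{p_i}(\eta)\subseteq\text{Per}_{p_j}(\eta)$, so the $p_j$-skeleton of any point refines its $p_i$-skeleton; when $m\equiv n\bmod p_i$ the points $T^m\eta$ and $T^n\eta$ have the same $p_i$-skeleton, so by (1) every $\omega\in\overline{A_m^j}$ carries the $p_j$-skeleton --- hence the $p_i$-skeleton --- of $T^n\eta$, i.e.\ $\omega\in\overline{A_n^i}$. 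For (4), $TA_n^i=A_{n+1}^i$ directly from the definition and $T$ is a homeomorphism, so $T\overline{A_n^i}=\overline{TA_n^i}=\overline{A_{n+1}^i}$.

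The main obstacle is the essential-period step and, lurking behind it, the precise meaning of ``having the same $p_i$-skeleton as $T^n\eta$'': does it mean agreeing with $T^n\eta$ at every coordinate in $\text{Per}_{p_i}(T^n\eta)$, or equality of the two skeletons including their supports? The first reading makes the closedness argument for $\subseteq$ immediate, but then in the $\supseteq$ argument one must rule out that $\omega$ is $p_i$-periodic at coordinates beyond $\text{Per}_{p_i}(\eta)-n$; the second reading makes $\supseteq$ clean but burdens $\subseteq$ with the same ``no spurious periodicity'' point. For a non-periodic Toeplitz $\eta$ with minimal orbit closure the two readings coincide --- $\overline{\mathcal{O}}(\eta)$ has no periodic points and, more sharply, no point acquires $p_i$-periodicity at coordinates not already forced by $\eta$ --- and establishing this, together with checking that the coordinates on which two prescribed skeletons overlap are plentiful enough to force invariance of the whole $p_i$-skeleton (here one uses that $\text{Per}_{p_i}(\eta)$ is a nonempty, hence syndetic, union of congruence classes modulo $p_i$), is where I expect the real bookkeeping to be.
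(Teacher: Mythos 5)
The paper gives no proof of this lemma---it is quoted verbatim from Williams---so your argument can only be judged on its own terms. Parts (2)--(4) do reduce to (1) as you say, and your forward inclusion $\overline{A_{n}^i}\subseteq\{\omega:\omega\text{ agrees with }T^n\eta\text{ on }\text{Per}_{p_i}(T^n\eta)\}$ is sound. The genuine gap is in the step separating $\overline{A_{n}^i}$ from $\overline{A_{j}^i}$ for $n\not\equiv j\pmod{p_i}$, on which both your reverse inclusion in (1) and the disjointness in (2) rest. You compare the two prescribed skeletons ``on the coordinates where both are specified'' and assert that this overlap is a nonempty union of congruence classes because $\text{Per}_{p_i}(\eta)\neq\emptyset$. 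That is false: the overlap is $(\text{Per}_{p_i}(\eta)-n)\cap(\text{Per}_{p_i}(\eta)-j)$, which is nonempty only when $n-j$ lies in the difference set of the residues occupied by $\text{Per}_{p_i}(\eta)$ modulo $p_i$; syndeticity of a union of residue classes does not make two of its translates intersect. In the very construction of Section 7 of this paper, $\text{Per}_{p_1}(\eta)$ occupies only five residues mod $p_1=q_1$, so for $q_1$ large the overlap is empty for most pairs $n\neq j$ and your comparison yields nothing; even when it is nonempty, agreement on a few residue classes does not by itself force invariance of the whole $p_i$-skeleton under the shift $j-n$.

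The missing idea is precisely the point you flag and then postpone as ``bookkeeping'': one must prove that a limit $\omega=\lim_k T^{m_k}\eta$ with $m_k\equiv n\pmod{p_i}$ acquires no spurious $p_i$-periodicity, i.e.\ $\text{Per}_{p_i}(\omega)=\text{Per}_{p_i}(\eta)-n$ exactly. This is where the Toeplitz property of $\eta$ must be used. If $\omega$ were constant on a class $l+p_i\Z$ with $l+n\notin\text{Per}_{p_i}(\eta)$, then $\eta$ would take a single value on arbitrarily long segments of the progression $l+n+p_i\Z$; but choosing witnesses $u,u'\equiv l+n\pmod{p_i}$ with $\eta(u)\neq\eta(u')$ and an $i'>i$ with $u,u'\in\text{Per}_{p_{i'}}(\eta)$, every segment of that progression of length exceeding $p_{i'}/p_i$ meets both classes $u+p_{i'}\Z$ and $u'+p_{i'}\Z$, a contradiction. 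With this in hand, a common point of $\overline{A_{n}^i}$ and $\overline{A_{j}^i}$ forces $\text{Per}_{p_i}(\eta)-n=\text{Per}_{p_i}(\eta)-j$ together with agreement of values there, so the $p_i$-skeleton is invariant under the shift $s=j-n$ and hence periodic with period $\gcd(s,p_i)<p_i$, contradicting essentiality. Without this ``no spurious periodicity'' step the proof is not complete, and the justification you do offer for the overlap being nonempty is incorrect.
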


For $g=(n_i)\in G$, set $A_g=\bigcap_{i=1}^{\infty}\overline{A_{n_i}^i}$. It is clear that $A_g$ is nonempty.
Define a map $\pi: \overline{\mathcal{O}}\rightarrow G$ by $\pi^{-1}(g)=A_g$.
\begin{thm} \cite{Williams}
We have
	\begin{enumerate}
		\item [(1)] $(G,\hat{1})$ is the maximal equicontinuous factor of $(\overline{\mathcal{O}},T)$ with the factor map $\pi$.
		\item [(2)] $\pi(\omega)=\pi(\omega')$ if and only if $\omega$ and $\omega'$ have the same $p_i$-skeleton, for all $i\in\mathbb{N}^*$. In particular, $\pi$ is one-to-one on the set of Toeplitz sequences in $\overline{\mathcal{O}}$ .
	\end{enumerate}
\end{thm}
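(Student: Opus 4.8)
The plan is to verify, in order, that $\pi$ is a well-defined continuous factor map onto an equicontinuous (in fact group) system, that it is almost $1-1$, and then to read off the maximality in~(1) and the injectivity in~(2) from these facts.

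\emph{Step 1: $\pi$ is a factor map onto $(G,+\hat 1)$.} By Lemma~\cite[Lemma 2.3]{Williams}(2),(3), for each $\omega\in\overline{\O}(\eta)$ and each $i$ there is a unique $n_i(\omega)\in\Z/p_i\Z$ with $\omega\in\overline{A^i_{n_i(\omega)}}$, and $n_{i+1}(\omega)\equiv n_i(\omega)\bmod p_i$; hence $(n_i(\omega))_i\in G$, $\omega\in A_{(n_i(\omega))_i}$, and no point lies in two distinct sets $A_g$. Thus $\{A_g:g\in G\}$ is a partition of $\overline{\O}(\eta)$ and $\pi(\omega)=(n_i(\omega))_i$ is a genuine map with $\pi^{-1}(g)=A_g$. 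It is continuous because $\pi^{-1}\{g:g_i=n\}=\overline{A^i_n}$ is clopen, and surjective because for $g=(n_i)\in G$ the nonempty compact sets $\overline{A^i_{n_i}}$ decrease in $i$ (Lemma~\cite[Lemma 2.3]{Williams}(3), as each contains $T^{n_i}\eta$), so $A_g\neq\varnothing$ by compactness. Equivariance $\pi(T\omega)=\pi(\omega)+\hat1$ follows from $T\overline{A^i_n}=\overline{A^i_{n+1}}$ (Lemma~\cite[Lemma 2.3]{Williams}(4)). Finally $(G,+\hat1)$ is equicontinuous (rotation by a topological generator on a compact metrizable group, using a translation-invariant metric) and minimal, in accordance with the standard fact that $(\overline{\O}(\eta),T)$ itself is minimal.

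\emph{Step 2: $\pi$ is almost $1-1$, hence proximal, hence $G$ is the maximal equicontinuous factor.} The key combinatorial point is $A_{\hat 0}=\{\eta\}$: if $\omega$ has the same $p_i$-skeleton as $\eta$ for every $i$, then, since $\bigcup_i\operatorname{Per}_{p_i}(\eta)=\Z$ (condition~(3) of a periodic structure), every coordinate $m$ lies in some $\operatorname{Per}_{p_i}(\eta)$, whence $\omega(m)=\eta(m)$; so $\omega=\eta$. By equivariance $\pi^{-1}(\hat n)=\{T^n\eta\}$ for all $n\in\Z$, and $\{\hat n:n\in\Z\}$ is dense in the monothetic group $G$, so $\pi$ has a singleton fibre and is therefore almost $1-1$ (recall that an extension of minimal systems is almost $1-1$ as soon as one fibre is a singleton). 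An almost $1-1$ extension of a minimal system is proximal: given $(\omega,\omega')\in R_\pi$, a singleton fibre $\{z_0\}$ over $y_0$, and $g_k$ with $g_k\pi(\omega)\to y_0$ (minimality of $G$), every subsequential limit of $g_k\omega$ and of $g_k\omega'$ lies in $\pi^{-1}(y_0)=\{z_0\}$, so $(\omega,\omega')\in{\bf P}(\overline{\O}(\eta))$. Hence $R_\pi\subseteq{\bf P}\subseteq\RP\subseteq S_{eq}(\overline{\O}(\eta))$, while $S_{eq}\subseteq R_\pi$ since $\overline{\O}(\eta)/R_\pi=G$ is equicontinuous; therefore $R_\pi=S_{eq}$, which is statement~(1).

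\emph{Step 3: statement (2), and the main obstacle.} The displayed equivalence is immediate: $\pi(\omega)=\pi(\omega')$ iff $n_i(\omega)=n_i(\omega')$ for all $i$, which by Lemma~\cite[Lemma 2.3]{Williams}(1) says exactly that for every $i$ the sequences $\omega$ and $\omega'$ have the same $p_i$-skeleton (namely that of $T^{n_i(\omega)}\eta$). For the ``in particular'' clause it suffices to show that a proximal pair of Toeplitz sequences must coincide, since $R_\pi\subseteq{\bf P}$: if $\omega,\omega'$ are Toeplitz and $\rho(T^{n_k}\omega,T^{n_k}\omega')\to0$, fix $j\in\Z$, let $P$ be a common multiple of periods witnessing the Toeplitz property of $\omega$ and of $\omega'$ at $j$, and for large $k$ choose $m\in\{0,\dots,P-1\}$ with $m+n_k\equiv j\bmod P$; then $\omega(m+n_k)=\omega(j)$ and $\omega'(m+n_k)=\omega'(j)$, while $T^{n_k}\omega$ and $T^{n_k}\omega'$ agree on $[-P,P]\ni m$ once $k$ is large, giving $\omega(j)=\omega'(j)$. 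So $\pi$ is injective on Toeplitz sequences. The step I expect to be the real obstacle is the maximality in~(1) — showing that no larger equicontinuous factor exists, i.e. $R_\pi\subseteq S_{eq}$. The route above funnels this into the single identity $A_{\hat 0}=\{\eta\}$, in which condition~(3) of a periodic structure is genuinely needed (otherwise $\pi$ need not be almost $1-1$); an alternative, producing regionally proximal pairs inside $R_\pi$ directly, would instead require the observation that the skeleta $\operatorname{Per}_{p_i}(\eta)$ eventually contain any prescribed interval, which again rests on condition~(3).
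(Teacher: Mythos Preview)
The paper does not prove this theorem; it is quoted from \cite{Williams} and stated without proof, so there is no ``paper's own proof'' to compare against. Your argument is correct and complete: the reduction of~(1) to the single computation $A_{\hat 0}=\{\eta\}$ (using condition~(3) of a periodic structure) followed by the chain $R_\pi\subseteq{\bf P}\subseteq S_{eq}\subseteq R_\pi$ is the standard route, and your proximal-pair argument for the injectivity in~(2) is a clean way to avoid having to establish separately that every Toeplitz $\omega\in\overline{\O}(\eta)$ satisfies $\bigcup_i\text{Per}_{p_i}(\omega)=\Z$.
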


Since $\text{Per}_{p_i}(\eta)$ is periodic, it has a density in $\mathbb{Z}$ given by
\[ d_i=\frac{1}{p_i}\cdot \#\left\{ n\in \in \mathbb{Z}/p_i\mathbb{Z}:~n\in\text{Per}_{p_i}(\eta)\right\}.\]
Then $(d_i)$ is increasing. Set $d=\lim\limits_{i\rightarrow\infty}d_i$. The Toeplitz sequence $\eta$ is called \textit{regular} if $d=1$.

Let $R=\{g\in G: ~|\pi^{-1}(g)|=1\}$. Then $\pi^{-1}(R)$ consists of Toeplitz sequences in $\overline{\mathcal{O}}$.

\begin{lem}\cite{Williams}\label{meas of reg pts}
	$R$ is measurable and
	\[ m(R)=\begin{cases}
		1 &\text{if}~~~ d=1\\
		0 &\text{if}~~~ d<1,
	\end{cases}\]
	where $m$ denotes the Haar measure on $G$.
\end{lem}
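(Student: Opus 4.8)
The plan is to describe the fibre $\pi^{-1}(g)$ combinatorially, so that the statement reduces to a density computation on the odometer $G$ together with an ergodicity argument.

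For $g=(n_i)\in G$ and $i\in\N^*$ I would set $F^i_g=\text{Per}_{p_i}(\eta)-n_i\subseteq\Z$ (well defined, since $\text{Per}_{p_i}(\eta)$ is a union of residue classes mod $p_i$ and $n_i$ is determined mod $p_i$); as $p_i\mid p_{i+1}$ one has $\text{Per}_{p_i}(\eta)\subseteq\text{Per}_{p_{i+1}}(\eta)$ and $n_{i+1}\equiv n_i\bmod p_i$, so $F^i_g\subseteq F^{i+1}_g$, and I put $F_g=\bigcup_i F^i_g$. The central step is the identity
\[
R=\{\,g\in G:\ F_g=\Z\,\}.
\]
For "$\supseteq$": any $\omega\in A_g=\pi^{-1}(g)=\bigcap_i\overline{A^i_{n_i}}$ agrees with $T^{n_i}\eta$ at every position of $\text{Per}_{p_i}(T^{n_i}\eta)=F^i_g$, so if $F_g=\Z$ all coordinates of $\omega$ are pinned down and $|A_g|=1$. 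For "$\subseteq$" I would argue the contrapositive: if $m_0\notin F_g$, then replacing $g$ by $g+\widehat{m_0}$ (which preserves $|A_g|$ since $\pi^{-1}(g+\hat1)=T\,\pi^{-1}(g)$, and turns $m_0\notin F_g$ into $0\notin F_{g+\widehat{m_0}}$) reduces us to $0\notin F_g$, i.e.\ $n_i\notin\text{Per}_{p_i}(\eta)$ for all $i$; then $\{\eta(m):m\equiv n_i\bmod p_i\}$, the set of zeroth coordinates of the points of $A^i_{n_i}$, has at least two elements, so $S_i:=\{\omega(0):\omega\in\overline{A^i_{n_i}}\}$ satisfies $|S_i|\ge2$. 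Because $\overline{A^{i}_{n_i}}\supseteq\overline{A^{i+1}_{n_{i+1}}}$ the finite sets $S_i$ are non-increasing, hence stabilise to some $S_\infty$ with $|S_\infty|\ge2$, and a short compactness argument gives $\{\omega(0):\omega\in A_g\}=S_\infty$; thus $|A_g|\ge2$.

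Granting the identity, $R=\bigcap_{m\in\Z}E_m$ where $E_m=\{g:m\in F_g\}=\bigcup_i\{g:\ n_i\bmod p_i\in(\text{Per}_{p_i}(\eta)-m)\bmod p_i\}$; each set in this countable union is clopen, so $E_m$ and $R$ are Borel and the measurability claim follows. Since the projection $G\to\Z/p_i\Z$ pushes $m$ to the uniform measure, and $\#\big((\text{Per}_{p_i}(\eta)-m)\bmod p_i\big)=\#\big(\text{Per}_{p_i}(\eta)\bmod p_i\big)=d_ip_i$, each of those clopen sets has measure $d_i$, and they increase with $i$; hence $m(E_m)=\lim_i d_i=d$ for every $m$.

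Finally I would assemble the dichotomy: if $d=1$ then every $E_m$ has full measure, so $m(R)=m\big(\bigcap_m E_m\big)=1$; if $d<1$ then $m(R)\le m(E_0)=d<1$, while $R$ is invariant under the rotation $g\mapsto g+\hat1$ (again because $\pi^{-1}(g+\hat1)=T\,\pi^{-1}(g)$ preserves cardinality), which is uniquely ergodic for the Haar measure $m$ since $G$ is compact monothetic with topological generator $\hat1$, so $m(R)\in\{0,1\}$ and therefore $m(R)=0$. I expect the reverse inclusion in the displayed identity — producing a branching of the fibre at a coordinate that no skeleton pins down — to be the only delicate point; it is where the structure of Toeplitz flows (essentially \cite{Williams}) is used, while the rest is a cylinder-set count and the ergodicity of the odometer.
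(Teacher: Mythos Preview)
The paper does not prove this lemma at all: it is stated with the citation \cite{Williams} and no proof is given, so there is no ``paper's own proof'' to compare your proposal against.

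That said, your argument is correct and is essentially the standard one (and presumably close to Williams' original). The identification $R=\{g:F_g=\Z\}$ is the heart of the matter; your forward inclusion is immediate, and your backward inclusion---translating so that $0\notin F_g$ and then using compactness of the nested $\overline{A^i_{n_i}}$ to produce two points of $A_g$ differing at coordinate $0$---is the right mechanism. The measure computation $m(E_m)=d$ via the push-forward to $\Z/p_i\Z$ and the final $0$--$1$ step via ergodicity of the odometer rotation are both clean. The only place to be slightly more explicit is the compactness step showing $\{\omega(0):\omega\in A_g\}=S_\infty$: you should note that for each $\sigma\in S_\infty$ one picks $\omega_i\in\overline{A^i_{n_i}}$ with $\omega_i(0)=\sigma$, extracts a convergent subsequence, and checks the limit lies in every $\overline{A^j_{n_j}}$ by nestedness---but you clearly have this in mind.
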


\subsection{The construction}
Let $\Sigma=\{0,1,2,3,4\}$ and let $(q_i) $ be an increasing sequence of even integers satisfying $q_1\geq 6$ and $\sum_{i}\frac{1}{q_i}$ converges. Let $p_0=1$ and $p_i=p_{i-1}q_{i}$, for every $i\in\mathbb{N}^*$. Now we construct a Toeplitz sequence $\eta$ inductively.

\noindent{\bf Step 1}.  Set
$$\eta(n)=\begin{cases}
	0,& n\equiv 0\mod q_{1},\\
	1,& n\equiv 1\mod q_{1},\\
	2,& n\equiv 2\mod q_{1},\\
	3,& n\equiv \frac{q_{1}}{2}+1\mod  q_1,\\
	4,& n\equiv -1\mod q_{1}.
\end{cases}$$
For each $k\in\mathbb{Z}$,  let
$$J(1,k)=[kp_1+3, \frac{q_{1}}{2}]\cup [\frac{q_{1}}{2}+2, (k+1)p_1-2].$$

\noindent {\bf Step 2}.  For  all $n\in J(1,k)$, set
\[\eta(n)=\begin{cases}
	0,& k\equiv 0\mod q_{2},\\
	1,& k\equiv 1\mod q_{2},\\
	2,& k\equiv 2\mod q_{2},\\
	3,& k\equiv\frac{q_{2}}{2}+1\mod q_2,\\
	4,& k\equiv -1\mod q_{2}.
\end{cases}\]
For each $k\in\mathbb{Z}$, let $J(2,k)$ denote the set of $n\in[kp_2+1, (k+1)p_2-1]$ that has not been defined at the end of the second step.

Assume that we have completed the $i^{\text{th}}$ step.  For each $k\in\mathbb{Z}$, let $J(i,k)$ denote the set of $n\in[kp_i+1, (k+1)p_i-1]$ that has not been defined at the end of $i^{\text{th}}$ step.\\

\noindent {\bf Step $i+1$}.  For  all $n\in J(i,k)$, set
\[\eta(n)=\begin{cases}
	0,& k\equiv 0\mod q_{i+1},\\
	1,& k\equiv 1\mod q_{i+1},\\
	2,& k\equiv 2\mod q_{i+1},\\
	3,& k\equiv\frac{q_{i+1}}{2}+1\mod q_{i+1},\\
	4,& k\equiv -1\mod q_{i+1}.
\end{cases}\]

Note that  $\eta$ has been defined on $[-p_{i},3p_{i}]$  at the end of $(i+1)^{\text{th}}$ step and the construction is periodic at each step, so $\eta$ is a Toeplitz sequence. Furthermore, it is clear that $\eta$ is aperiodic.

\begin{lem}
	The sequence $\eta$ is regular if and only if $\sum_{i=1}^{\infty}\frac{1}{q_i}$ diverges.
\end{lem}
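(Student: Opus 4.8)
The plan is to pin down the density $d_i$ of $\mathrm{Per}_{p_i}(\eta)$ exactly, and then apply the classical fact that for $a_j\in[0,1)$ one has $\prod_{j}(1-a_j)=0$ iff $\sum_j a_j=\infty$. Concretely, I will show
\[ d_i \;=\; 1-\prod_{j=1}^{i}\Bigl(1-\tfrac{5}{q_j}\Bigr), \qquad\text{so that}\qquad d=\lim_{i\to\infty}d_i=1-\prod_{j=1}^{\infty}\Bigl(1-\tfrac{5}{q_j}\Bigr). \]
Since $0<\tfrac{5}{q_j}\le\tfrac{5}{6}<1$, this gives $d=1$, i.e.\ $\eta$ regular, precisely when $\sum_j\tfrac{5}{q_j}=\infty$, equivalently $\sum_j\tfrac{1}{q_j}=\infty$.

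The first step is to count undefined coordinates. Let $U_i$ be the number of $n\in[0,p_i)$ that have not been assigned a symbol by the end of Step~$i$; since Steps~$1,\dots,i$ are periodic with periods $p_1\mid p_2\mid\cdots\mid p_i$, the condition ``$n$ is defined by the end of Step~$i$'' is $p_i$-periodic, so this count is meaningful. At Step~$1$ exactly the residues $0,1,2,\tfrac{q_1}{2}+1,-1\pmod{q_1}$ are defined, and since $q_1\ge 6$ is even these five residues are pairwise distinct, whence $U_1=q_1-5$. For the induction, split $[0,p_{i+1})=[0,p_iq_{i+1})$ into the $q_{i+1}$ sub-blocks $[mp_i,(m+1)p_i)$, $0\le m<q_{i+1}$, each carrying exactly $U_i$ undefined coordinates after Step~$i$. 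The endpoints $mp_i,(m+1)p_i-1$ are already defined at Step~$1$ (being $\equiv 0$, resp.\ $\equiv-1\pmod{q_1}$), so $J(i,m)$ consists of \emph{all} still-undefined coordinates of the $m$-th sub-block; and Step~$i+1$ fills $J(i,m)$ exactly when $m\equiv 0,1,2,\tfrac{q_{i+1}}{2}+1,-1\pmod{q_{i+1}}$, which again (as $q_{i+1}\ge 6$ is even) occurs for exactly five values of $m$. Thus five sub-blocks become fully defined while the other $q_{i+1}-5$ retain $U_i$ undefined coordinates each, so $U_{i+1}=(q_{i+1}-5)U_i$ and hence $U_i=\prod_{j=1}^{i}(q_j-5)$.

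Next I would identify $\mathrm{Per}_{p_i}(\eta)$ with the set $S_i$ of coordinates defined during Steps $1,\dots,i$; this yields $d_i=\frac{p_i-U_i}{p_i}=1-\prod_{j=1}^{i}\bigl(1-\frac{5}{q_j}\bigr)$ (recall $p_i=\prod_{j=1}^i q_j$). The inclusion $S_i\subset\mathrm{Per}_{p_i}(\eta)$ is immediate: a coordinate defined at Step~$j\le i$ has value depending only on its residue mod $p_j$, hence is constant along $n+p_i\Z\subset n+p_j\Z$. For $\mathrm{Per}_{p_i}(\eta)\subset S_i$, take $n$ first assigned a symbol at some Step~$j>i$; then $n$ lies in $J(j-1,k)$ with $k=\lfloor n/p_{j-1}\rfloor\equiv$ one of the five special residues mod $q_j$, and all of $J(j-1,k)$ receives one fixed symbol at Step~$j$. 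For each of the other four special residues $r\pmod{q_j}$ choose $t\in\Z$ with $k+t\equiv r\pmod{q_j}$; the shifted coordinate $n+tp_{j-1}$ then lies in block $k+t$, is still undefined after Step~$j-1$ (that set being $p_{j-1}$-periodic), and is assigned at Step~$j$ a \emph{different} one of the five pairwise distinct symbols $0,1,2,3,4$. As $p_i\mid p_{j-1}$, we have $n+tp_{j-1}\in n+p_i\Z$, so $\eta$ is non-constant on $n+p_i\Z$ and $n\notin\mathrm{Per}_{p_i}(\eta)$.

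Finally, letting $i\to\infty$ and invoking the infinite-product criterion as above completes the proof. Note that the easy direction ($\sum 1/q_i=\infty\Rightarrow$ regular) uses only the inclusion $S_i\subset\mathrm{Per}_{p_i}(\eta)$, whereas the converse needs $\mathrm{Per}_{p_i}(\eta)\subset S_i$. I expect this last inclusion to be the only delicate point: it is exactly where the five pairwise distinct symbols of the construction are used, and it is also what makes each $p_i$ an essential period (so that $(p_i)$ is a genuine periodic structure in the sense of the earlier definition); the remaining arithmetic is routine bookkeeping.
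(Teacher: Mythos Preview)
Your argument is correct and follows the same approach as the paper: compute the density $d_i$ via the recursion $1-d_{i+1}=(1-d_i)\bigl(1-\tfrac{c}{q_{i+1}}\bigr)$ and then invoke the infinite-product criterion. The paper simply asserts this recursion (with $c=4$, apparently a typo since five residue classes $0,1,2,\tfrac{q}{2}+1,q-1$ are filled at each step), whereas you supply the missing justification by explicitly counting undefined coordinates and proving $\mathrm{Per}_{p_i}(\eta)=S_i$; your constant $c=5$ is the correct one, and of course the conclusion is unaffected.
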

\begin{proof}
	Recall that $d_i=\frac{1}{p_i}\cdot \#\{n\in\mathbb{Z}/p_i\mathbb{Z}: n\in \text{Per}_{p_i}(\eta)\}$. Then $d_1=\frac{4}{p_1}$. For $i\geq 1$,
	\[d_{i+1}=d_i+(1-d_i)\frac{4}{q_{i+1}}.\]
	Thus
	\[ 1-d_{i+1}=(1-d_1)\prod_{j=1}^{i}\left(1-\frac{4}{q_{j+1}}\right).\]
	Then $d_i\rightarrow 1$ if and only if $\prod_{j=1}^{i}\left(1-\frac{4}{q_{j+1}}\right)\rightarrow 0$, if and only if $\sum_{i=1}^{\infty}\frac{1}{q_i}$ diverges..
\end{proof}

\begin{lem}\label{Oxtoby seq}\cite[Lemma 3.3]{Williams}
	\begin{enumerate}
		\item [(1)] For all $\omega\in\overline{\mathcal{O}}(\eta)$, $\omega(n)$ is constant on $\text{Aper}(\omega)$.
		\item [(2)] For each $g\in G$ and $\sigma\in \Sigma$, there is an $\omega\in\pi^{-1}(g)$ with $\omega(n)=\sigma$ for all $n\in \text{Aper}(g)$.
	\end{enumerate}
\end{lem}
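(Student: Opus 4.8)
The plan is to reduce the statement to two structural features of the construction, after which a single limiting device $\omega=\lim_jT^{m_j}\eta$ handles both parts. First, the \emph{fibre description}: by the quoted lemmas of Williams, if $\omega\in\overline{\mathcal O}(\eta)$ and $\pi(\omega)=g=(n_i)\in G$, then $\omega$ has the same $p_i$-skeleton as $T^{n_i}\eta$ for every $i$, so $n\in\text{Per}_{p_i}(\omega)$ iff $n+n_i\in\text{Per}_{p_i}(\eta)$; consequently $\text{Aper}(\omega)\subseteq\text{Aper}(g):=\{n\in\Z:\ n+n_i\notin\text{Per}_{p_i}(\eta)\ \text{for all}\ i\}$ (in fact equality holds, but only this inclusion is needed). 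Second, \emph{monolithic filling}: from the wording of Step $i+1$, a hole $J(i,k)$ is left untouched unless its $p_i$-block index $k$ is congruent modulo $q_{i+1}$ to one of the five coding residues $0,1,2,\frac{q_{i+1}}{2}+1,-1$ (coding the symbols $0,1,2,3,4$); hence $J(i,k)$ is filled, all at once, at the first step $s+1$ at which its $p_s$-block index lands in the coding set modulo $q_{s+1}$, and it then receives the single corresponding symbol of $\Sigma$. In particular $\eta$ is constant on each hole $J(i,k)$. Finally, the \textbf{wall estimate}: near a $p_i$-block boundary $Mp_i$, the $p_{i-1}$-blocks at the three places immediately to the right and the place immediately to the left have $p_{i-1}$-indices $\equiv 0,1,2$, resp. $-1$, modulo $q_i$, so they are completed by Step $i$; thus $[Mp_i-p_{i-1},Mp_i+3p_{i-1})\subseteq\text{Per}_{p_i}(\eta)$, and therefore, if $a<b$, $a,b\notin\text{Per}_{p_i}(\eta)$, and a multiple of $p_i$ lies in $(a,b]$, then $b-a>4p_{i-1}$. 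Since $p_{i-1}\to\infty$, for each $D$ there is $i_0(D)$ such that $a,b\notin\text{Per}_{p_i}(\eta)$, $|a-b|\le D$ and $i\ge i_0(D)$ force $a,b$ into a common $p_i$-block of $\eta$.

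To prove (1), I would write $\omega=\lim_jT^{m_j}\eta$ with $m_j\to g$ in $G$, fix $n,n'\in\text{Aper}(\omega)\subseteq\text{Aper}(g)$ and put $D=|n-n'|$. For large $j$ we have $m_j\equiv n_i\pmod{p_i}$ with $i=i(j)\ge i_0(D)$; then $n+m_j$ and $n'+m_j$ are congruent modulo $p_i$ to $n+n_i\notin\text{Per}_{p_i}(\eta)$ and $n'+n_i\notin\text{Per}_{p_i}(\eta)$, so the wall estimate places them in a common $p_i$-block of $\eta$, while both remain undefined after Step $i$; hence they lie in one hole $J(i,\cdot)$, on which $\eta$ is constant. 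Therefore $\eta(n+m_j)=\eta(n'+m_j)$, and letting $j\to\infty$ gives $\omega(n)=\omega(n')$.

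To prove (2), if $\text{Aper}(g)=\varnothing$ then $\pi^{-1}(g)$ is a single Toeplitz point and there is nothing to prove; so assume $\text{Aper}(g)\neq\varnothing$ and fix $\sigma\in\Sigma$. I would build $\omega=\lim_jT^{m_j}\eta$ by first imposing $m_j\equiv n_j\pmod{p_j}$ — this already forces $m_j\to g$, so $\pi(\omega)=g$ and $\omega$ carries the prescribed skeleton values off $\text{Aper}(g)$ — and then using the still-free choice of $m_j\bmod p_l$ for $l>j$. By the wall estimate there is $i^{*}(j)\le j$ such that the finitely many points of $F_j=\{n+m_j:\ n\in\text{Aper}(g),\ |n|\le j\}$ lie in one $p_{i^{*}}$-block of $\eta$, hence share a common $p_l$-block index for every $l\ge i^{*}$, and each point of $F_j$ is undefined after Step $j$. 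Pick $j'>j$ and, for $l=j,j+1,\dots,j'$ in turn, choose the digit of $m_j$ at place value $p_l$ so that this common $p_l$-block index avoids the coding set modulo $q_{l+1}$ when $l<j'$ and equals the coding residue of $\sigma$ modulo $q_{j'+1}$ when $l=j'$. Then every point of $F_j$ stays an undefined hole position through Step $j'$ and is assigned the symbol $\sigma$ at Step $j'+1$, i.e. $\eta(n+m_j)=\sigma$ for all $n\in\text{Aper}(g)$ with $|n|\le j$; letting $j\to\infty$ yields $\omega(n)=\sigma$ for every $n\in\text{Aper}(g)$.

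The step I expect to be the main obstacle is the wall estimate, together with pinning down $\text{Aper}(\omega)$ precisely: one has to unwind the recursive nesting of the holes of $\eta$ and verify that the periodic part of $\eta$ always occupies a band of width of order $p_{i-1}$ on each side of every $p_i$-block boundary, and also check the soft convergence points ($m_j\to g$, $i^{*}(j)\le j$). Once this geometric fact is available, monolithic filling turns both parts into bookkeeping with residues modulo the $q_i$, carried out through the common construction $\omega=\lim_jT^{m_j}\eta$.
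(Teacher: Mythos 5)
Your proposal is correct: both parts go through, and the two facts you isolate (each hole $J(i,k)$ is filled monolithically with a single symbol, and $\pi(\omega)=g=(n_i)$ forces $\omega$ to share the $p_i$-skeleton of $T^{n_i}\eta$, so $\mathrm{Aper}(\omega)\subseteq\mathrm{Aper}(g)$) are exactly the structural inputs the paper uses. The wall estimate is also correct as stated: the $p_{i-1}$-blocks with indices $\equiv 0,1,2,-1 \pmod{q_i}$ are completed at step $i$, so $[Mp_i-p_{i-1},Mp_i+3p_{i-1})\subseteq\mathrm{Per}_{p_i}(\eta)$, and your localization of nearby aperiodic positions into a common hole follows.

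Where you differ from the paper is precisely at the step you flag as the main obstacle, and it turns out to be avoidable. The paper never needs the wall estimate: it works in the shifted window $[-n_i,p_i-n_i)$, where by definition $[-n_i,p_i-n_i)\setminus\mathrm{Per}_{p_i}(T^{n_i}\eta)=J(i,0)-n_i$, so \emph{all} aperiodic positions in that window automatically lie in the single hole $J(i,0)-n_i$; constancy of any $\omega\in\overline{A^i_{n_i}}$ on that hole is then immediate from constancy of $\eta$ on each $J(i,k)$, and letting $i\to\infty$ gives (1) with no quantitative estimate. For (2) the paper likewise takes the explicit approximants $T^{n_i+c_ip_i}\eta$ with $c_i\in\{0,1,2,\frac{q_{i+1}}{2}+1,-1\}$ the coding residue of $\sigma$, which equal $\sigma$ on all of $J(i,0)-n_i\supseteq\mathrm{Aper}(g)\cap[-n_i,p_i-n_i)$, replacing your digit-by-digit selection over $l=j,\dots,j'$. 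Your route buys a reusable quantitative fact (aperiodic positions sit at distance $\geq p_{i-1}$ from every $p_i$-wall), which is genuinely useful elsewhere in this circle of ideas, at the cost of extra bookkeeping (the check $i_0(2j)\le j$, the shift-invariance of the hole structure under adding multiples of $p_l$); the paper's windowing trick makes both parts essentially one-line consequences of the two structural facts.
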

\begin{proof}
	(1) Let $\pi(\omega)=g=(n_1,n_2,\cdots)$. For each $i$,  $\omega$ has the same $p_i$-skeleton as $T^{n_i}\eta$ and so
	$$[-n_i, p_i-n_i) \cap \text{Aper}(\omega)\subset [-n_i, p_i-n_i)\setminus\text{Per}_{p_i}(T^{n_i}\eta)=J(i,0)-n_i.$$
	If $T^{m}\eta\in A^{i}_{n_i}$ then $m=n_i+kp_{i}$ for some $k\in\mathbb{Z}$ , then $T^{m}\eta$ is constant on $J(i,0)-n_i$ since $\eta$ is constant on $J(i,k)$. Since $\omega\in \overline{A_{n_i}^{i}}$, $\omega$ must also be constant on $J(i,0)-n_i$. Hence $\omega$ is constant on $ [-n_i, p_i-n_i) \cap \text{Aper}(\omega)$ for all $i$.  If $\text{Aper}(\omega)\neq \emptyset$, then $-n_i\rightarrow -\infty$ and $p_{i}-n_i\rightarrow \infty$. Therefore, $\omega$ is constant on $\text{Aper}(\omega)$.
	
	(2) Let $g=(n_i)\in G$ and $\sigma\in \Sigma$. The sequences $T^{n_i}\eta$ all have the same $p_j$-skeleton for $i>j$, so $T^{n_i}\eta(n)$ is eventually constant for each $n\in\text{Aper}(g)$.  For any positive integer $i$,  $T^{n_{i}}\eta(n)=0$ for $n\in J(i,0)-n_{i}$, which contains $[-n_{i}, p_{i}-n_{i}) \cap \text{Aper}(\omega)$;  $T^{n_{i}+p_i}\eta(n)=1$ for $n\in J(i,1)-n_{i}-p_i$, which contains $[-n_{i}, p_{i}-n_{i}) \cap \text{Aper}(\omega)$; $T^{n_{i}+2p_i}\eta(n)=0$ for $n\in J(i,2)-n_{i}-2p_i$, which contains $[-n_{i}, p_{i}-n_{i}) \cap \text{Aper}(\omega)$; $T^{n_{i}+\left(\frac{q_{i+1}}{2}+1\right)p_i}\eta(n)=3$ for $n\in J(i,-1)-n_{i}-\left(\frac{q_{i+1}}{2}+1\right)p_i$, which contains $[-n_{i}, p_{i}-n_{i}) \cap \text{Aper}(\omega)$; $T^{n_{i}-p_i}\eta(n)=4$ for $n\in J(i,-1)-n_{i}+p_i$, which contains $[-n_{i}, p_{i}-n_{i}) \cap \text{Aper}(\omega)$.  Therefore, if $\sigma=k\in\{0,1,2\}$ then  $T^{n_{i}+kp_{i}}\eta$ converges to the desired $\omega$ as $i\rightarrow\infty$, if  $\sigma=3$ then  $T^{n_{i}+\left(\frac{q_{i+1}}{2}+1\right)p_i}\eta$ converges to the desired $\omega$ as $i\rightarrow\infty$ , and   if $\sigma=4$ then  $T^{n_{i}-p_{i}}\eta$ converges to the desired $\omega$ as $i\rightarrow\infty$.
\end{proof}

\begin{lem}\label{min per}
	For a nonnegative integer $n$, if we write
	$ n=n_0+n_1p_1+n_2p_2+n_3p_3+\cdots $, where $n_{i}\in\{0,1,\cdots, q_{i+1}\}$,
	then $j=\min\left\{i: n_i\in\{0,1,2, \frac{q_{i+1}}{2}+1, q_{i+1}-1\}\right\}$ is
	the smallest integer such that $n\in \text{Per}_{p_{j+1}}(\eta)$.
\end{lem}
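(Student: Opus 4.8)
The plan is to identify, for each position $n$, the exact step of the construction at which $\eta(n)$ gets its value, to express that step through the digits $n_i$, and then to read off the period $p_{j+1}$ and the absence of the period $p_j$ directly from the construction rule. Throughout write $S_i=\{0,1,2,\tfrac{q_{i+1}}{2}+1,q_{i+1}-1\}$ and note that, since $q_{i+1}$ is even and $q_{i+1}\ge q_1\ge 6$, these five residues modulo $q_{i+1}$ are pairwise distinct.

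First I would nail down the bookkeeping. The rule applied at step $i+1$ assigns to a position a value depending only on the residue modulo $q_{i+1}$ of the index $k$ of the length-$p_i$ block containing it, and that residue is precisely $n_i$; since the five target residues are distinct, this is a fixed injection $c_i\colon S_i\to\Sigma$ with $c_i(0)=0,\ c_i(1)=1,\ c_i(2)=2,\ c_i(\tfrac{q_{i+1}}{2}+1)=3,\ c_i(q_{i+1}-1)=4$. Because $J(i,k)=kp_i+J(i,0)$, whether a position is still undefined after step $i$ depends only on its residue mod $p_i$, equivalently only on $n_0,\dots,n_{i-1}$; an easy induction on $i$ then shows that $n$ is undefined after step $i$ if and only if $n_0\notin S_0,\dots,n_{i-1}\notin S_{i-1}$. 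Consequently $\eta(n)$ is assigned exactly at step $j+1$, where $j=\min\{i:n_i\in S_i\}$ is the index in the statement, and moreover $\eta(n)=c_j(n_j)$; in particular the part of $\eta$ already defined after step $i+1$ is periodic with period $p_{i+1}$.

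With this dictionary in hand, both halves are short. If $n'\equiv n\pmod{p_{j+1}}$, then $n'$ has the same digits $n_0,\dots,n_j$, hence the same value $j$ of $\min\{i:n'_i\in S_i\}$, so $\eta(n')$ is assigned at step $j+1$ and equals $c_j(n'_j)=c_j(n_j)=\eta(n)$; thus $\eta$ is constant on the residue class of $n$ modulo $p_{j+1}$ and $n\in\mathrm{Per}_{p_{j+1}}(\eta)$. Conversely suppose $j\ge 1$ (for $j=0$ there is nothing left to prove). Since $|S_j|=5$, choose $s\in S_j\setminus\{n_j\}$, let $m\equiv s-n_j\pmod{q_{j+1}}$ and set $n'=n+mp_j$. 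Then $n'\equiv n\pmod{p_j}$, the digits $n'_0=n_0,\dots,n'_{j-1}=n_{j-1}$ are unchanged (still not in $S_0,\dots,S_{j-1}$) while $n'_j=s\in S_j$, so $\eta(n')=c_j(s)\ne c_j(n_j)=\eta(n)$ by injectivity of $c_j$. Hence $\eta$ is not constant on the residue class of $n$ modulo $p_j$, i.e. $n\notin\mathrm{Per}_{p_j}(\eta)$. Since $\mathrm{Per}_{p_i}(\eta)\subset\mathrm{Per}_{p_{i'}}(\eta)$ whenever $i\le i'$ (because $p_i\mid p_{i'}$), it follows that $j$ is the smallest index with $n\in\mathrm{Per}_{p_{j+1}}(\eta)$.

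The only real work is in the second paragraph: making the equivalence between "undefined after step $i$" and the digit condition $n_0\notin S_0,\dots,n_{i-1}\notin S_{i-1}$ completely rigorous, uniformly for every $n\in\Z$ — negative positions cause no trouble since the construction is $p_i$-periodic at step $i$, so one may argue with residues mod $p_i$ — and verifying that the value assigned at each step really factors through $n_i$. Once that is done, together with the injectivity of $c_i$ (this is exactly where the hypotheses "$q_i$ even" and "$q_1\ge 6$" enter), the periodicity and non-periodicity statements are immediate digit manipulations.
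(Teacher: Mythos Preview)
Your argument is correct and follows exactly the approach the paper intends: the paper's own proof is the single sentence ``From the construction of $\eta$, $n$ is defined for $\eta$ at the $j^{\text{th}}$ step and the lemma is followed,'' and what you have written is simply this sentence unpacked---identifying the step at which $\eta(n)$ is assigned via the digit condition, then reading off both the $p_{j+1}$-periodicity and the failure of $p_j$-periodicity. Your added care about the injectivity of $c_j$ (needing $q_i$ even and $q_1\ge 6$) and the explicit construction of a witness $n'$ in the same $p_j$-class with a different value are details the paper leaves implicit.
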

\begin{proof} From the construction of $\eta$, $n$ is defined for $\eta$ at the $j^{\text{th}}$ step and the lemma is followed.
\end{proof}

\begin{lem}
	For any $g=(g_1,g_2,\cdots)\in G$,  the fiber of $g$ is a singleton if and only if there exists an increasing subsequence $(n_i)$ such that  for each $i$,
	\[ \frac{g_{n_{i}+1}-g_{n_{i}} }{p_{n_i}} \in\left\{ 0,1,2, \frac{q_{n_i+1}}{2}+1, q_{n_i+1}-1 \right\} .\]
\end{lem}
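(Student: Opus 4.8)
The plan is to recode the condition $|\pi^{-1}(g)|=1$ as a combinatorial statement about the ``digit expansion'' of $g$ in the profinite group $G$, and then to read that statement off from Lemma \ref{min per}. Write $S_l=\{0,1,2,\frac{q_{l+1}}{2}+1,q_{l+1}-1\}\subseteq\{0,\dots,q_{l+1}-1\}$, a set of size at most $5<q_{l+1}$ (recall $q_{l+1}\ge 6$). Every $h\in G$ has a unique digit expansion $(h_{(0)},h_{(1)},\dots)$ with $h_{(l)}\in\{0,\dots,q_{l+1}-1\}$ whose partial sums $\sum_{l<i}h_{(l)}p_l$ equal the least nonnegative residue of $h_i$ modulo $p_i$; explicitly $h_{(0)}=h_1$ and $h_{(l)}=(h_{l+1}-h_l)/p_l$ for $l\ge 1$, so the condition in the statement says precisely that $g_{(l)}\in S_l$ for infinitely many $l$. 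Setting $\text{Aper}(g)=\Z\setminus\bigcup_i(\text{Per}_{p_i}(\eta)-g_i)$ (note $\text{Per}_{p_i}(\eta)$ is a union of residue classes mod $p_i$), I will prove the chain $|\pi^{-1}(g)|=1\iff\text{Aper}(g)=\emptyset\iff\big(\text{for every }m\in\Z\text{ the expansion of }g+\hat m\text{ meets some }S_l\big)\iff\big(g_{(l)}\in S_l\text{ for infinitely many }l\big).$

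For the first equivalence: if $\text{Aper}(g)=\emptyset$, then for each $n\in\Z$ there is $i$ with $n+g_i\in\text{Per}_{p_i}(\eta)$, and since every $\omega\in\pi^{-1}(g)=\bigcap_i\overline{A_{g_i}^i}$ has the same $p_i$-skeleton as $T^{g_i}\eta$, it follows that $\omega(n)=\eta(n+g_i)$; hence $\omega$ is forced and the fibre is a singleton. If instead $\text{Aper}(g)\neq\emptyset$, Lemma \ref{Oxtoby seq}(2) supplies $\omega_0,\omega_1\in\pi^{-1}(g)$ that are constantly $0$, resp.\ $1$, on $\text{Aper}(g)$, whence $|\pi^{-1}(g)|\ge 2$. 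For the second equivalence, apply Lemma \ref{min per}: a nonnegative integer $n<p_i$ lies in $\text{Per}_{p_i}(\eta)$ iff $n_{(l)}\in S_l$ for some $l<i$; applying this to the least residue of $m+g_i\bmod p_i$, whose first $i$ digits coincide with those of $g+\hat m\in G$, shows that $m\notin\text{Aper}(g)$ iff the digit expansion of $g+\hat m$ meets some $S_l$.

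It then remains to prove the last equivalence. For ``$\Leftarrow$'' I argue by contraposition: if $g_{(l)}\notin S_l$ for all $l\ge L$, choose $d_l\in\{0,\dots,q_{l+1}-1\}\setminus S_l$ for $l<L$, let $\bar g\in\{0,\dots,p_L-1\}$ represent $g_L$ and $\bar d=\sum_{l<L}d_lp_l$, and set $m=\bar d-\bar g\in\Z$; then $g+\hat m=(g-\hat{\bar g})+\hat{\bar d}$ adds two group elements with disjoint digit supports level by level, hence is carry-free, so $g+\hat m$ has digit $d_l\notin S_l$ for $l<L$ and $g_{(l)}\notin S_l$ for $l\ge L$, giving $m\in\text{Aper}(g)$. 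For ``$\Rightarrow$'', again by contraposition, suppose $h:=g+\hat m$ has no digit in any $S_l$; since $0,q_{l+1}-1\in S_l$, the expansion of $h$ never takes the value $0$ or the value $q_{l+1}-1$. Writing $g=h+(-\hat m)$ and noting that $-\hat m$ has digits eventually $0$ (if $m\le 0$) or eventually $q_{l+1}-1$ (if $m>0$), say from level $L$ on, one checks that the carry in the sum stabilises above $L$: when $-\hat m$ contributes $0$ no carry can be generated (that would need $h_{(l)}=q_{l+1}-1$), and when it contributes $q_{l+1}-1$ a carry is always generated (since $h_{(l)}\ge 1$) and the output digit is again $h_{(l)}$. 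So $g$ and $h$ agree at all levels $>L$, and hence $g$ has digits in $S_l$ at only finitely many levels.

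I expect the arithmetic core --- that $0$ and $q_{l+1}-1$ lie in every $S_l$, so that a digit sequence avoiding the $S_l$'s is immune to the carry cascades caused by integer translation --- to be short once isolated, and the main work to be the bookkeeping of the middle two paragraphs: carefully identifying the digit expansion of the group element $g+\hat m\in G$ with the base expansion of the integer residue $m+g_i\bmod p_i$ that feeds into Lemma \ref{min per}, and pinning down both the equivalence $|\pi^{-1}(g)|=1\iff\text{Aper}(g)=\emptyset$ and the precise meaning of ``having the same $p_i$-skeleton as $T^{g_i}\eta$''.
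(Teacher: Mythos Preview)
Your proposal is correct and leads to a valid proof. The chain of equivalences
\[
|\pi^{-1}(g)|=1 \iff \text{Aper}(g)=\emptyset \iff \bigl(\forall m,\ g+\hat m\text{ meets some }S_l\bigr) \iff \bigl(g_{(l)}\in S_l\text{ infinitely often}\bigr)
\]
is sound, and each step goes through with the lemmas you cite. One cosmetic point: in your final equivalence you have swapped the labels ``$\Leftarrow$'' and ``$\Rightarrow$'' (the argument you call ``$\Leftarrow$'' actually proves $A\Rightarrow B$, and vice versa), but since both arguments are present and correct this does not affect the mathematics.

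Your route differs from the paper's in organisation and emphasis. The paper argues each direction directly at the level of skeletons: for the forward direction it fixes an interval $[-g_i,p_i-g_i)$, uses that the unique $x\in\pi^{-1}(g)$ is Toeplitz to find $j$ with this interval inside $\text{Per}_{p_j}(x)$, expands $g_j-g_i$ in mixed radix, and plugs in the specific test point $n=3+3p_1+\cdots+3p_{i-1}$ into Lemma~\ref{min per} to extract a good digit between levels $i$ and $j$; the converse is a similar hands-on computation. Your approach instead isolates the combinatorial core: you first reduce to $\text{Aper}(g)=\emptyset$ via Lemma~\ref{Oxtoby seq}, then recast this uniformly as a statement about the digit expansions of \emph{all} integer translates $g+\hat m$, and finally dispose of the quantifier over $m$ by a carry-propagation argument that exploits the crucial fact $0,q_{l+1}-1\in S_l$. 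This last step is the conceptual gain of your method: it explains transparently \emph{why} having $0$ and $q_{l+1}-1$ in the distinguished set $S_l$ makes the condition translation-invariant in the tail, whereas in the paper this feature of $S_l$ is used only implicitly. The paper's proof is shorter and more concrete; yours is more structural and would adapt more readily to variants of the construction.
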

\begin{proof}
	Suppose that $|\pi^{-1}(g)|=1$ and let $x=\pi^{-1}(g)$. Then $x$ is a Toeplitz sequence. Thus for any $i$, there exists $j$ such that $[-g_i, p_i-g_i)\subset\text{Per}_{p_j}(x)$.  It is clear that $j\geq i+1$, otherwise $x$ is periodic. Since $x$ has the same $p_j$-skeleton as $T^{g_j}\eta$, we have $[-g_i, p_i-g_i)\subset\text{Per}_{p_j}(T^{g_j}\eta)$. Hence $ [-g_i, p_i-g_i)+g_j\subset\text{Per}_{p_j}(\eta)$.  For any $n\in[0,p_i)$,
	\begin{eqnarray*}
		g_j-g_i+n&=&n+ (g_{i+1}-g_i)+(g_{i+2}-g_{i+1})+\cdots+(g_j-g_{j-1})\\
		&=&n+\frac{g_{i+1}-g_i}{p_i}p_i+\frac{g_{i+2}-g_{i+1}}{p_{i+1}}p_{i+1}+\cdots+\frac{g_j-g_{j-1}}{p_{j-1}}p_{j-1}.
	\end{eqnarray*}
	Taking $n=3+3p_1+\cdots+3p_{i-1}$, by Lemma \ref{min per}, there exists $k$ with $i+1\leq k\leq j$ such that $\frac{g_{k}-g_{k-1} }{p_{k}} \in\left\{ 0,1,2, \frac{q_{k+1}}{2}+1, q_{k+1}-1 \right\}$. Since $i$ is arbitrary, we can find an increasing subsequence $(n_i)$ such that  for each $i$,
	\[ \frac{g_{n_{i}+1}-g_{n_{i}} }{p_{n_i}} \in\left\{ 0,1,2, \frac{q_{n_i+1}}{2}+1, q_{n_i+1}-1 \right\} .\]
	
	Conversely, suppose that there exists an increasing subsequence $(n_i)$ such that  for each $i$,
	$ \frac{g_{n_{i}+1}-g_{n_{i}} }{p_{n_i}} \in\left\{ 0,1,2, \frac{q_{n_i+1}}{2}+1, q_{n_i+1}-1 \right\} $. We may assume that $-g_i\rightarrow -\infty$ and $p_i-g_i\rightarrow \infty$, otherwise there $g=\widehat{m}$ for some integer $m$ whence $|\pi^{-1}(g)|=1$. For any $k$, there exists $i$ such that $ k< n_{i}$. Then for any $n\in [-g_k, p_k-g_k)$,
	\begin{eqnarray*}
		g_{n_{i}}-g_k+n&=&n+ (g_{k+1}-g_{k})+(g_{k+2}-g_{k+1})+\cdots+(g_{n_{i}}-g_{n_{i}-1})\\
		&=&n+\frac{g_{k+1}-g_k}{p_k}p_k+\frac{g_{k+2}-g_{k+1}}{p_{k+1}}p_{k+1}+\cdots+\frac{g_{n_{i}}-g_{n_{i}-1}}{p_{n_{i}-1}}p_{n_{i}-1}.
	\end{eqnarray*}
	Since $\frac{g_{n_{i}}-g_{n_{i}-1}} {p_{n_{i}-1}}\in\left\{ 0,1,2, \frac{q_{n_i}}{2}+1, q_{n_i}-1 \right\}$, we have $g_{n_{i}}-g_k+n\in\text{Per}_{p_{n_i}}(\eta)$. For $x\in\pi^{-1}(g)$,  $$-g_k+n\in\text{Per}_{p_{n_i}}(T^{g_{n_i}}\eta)=\text{Per}_{p_{n_i}}(x).$$
	Since $k$ and $n$ are arbitrary ,  $x$ is a Toeplitz sequence. Hence $|\pi^{-1}(g)|=1$.
\end{proof}

\subsection{The verification}

\begin{thm}
	$\pi: (\overline{\mathcal{O}}(\eta),T)\rightarrow (G,\hat{1})$ is almost 1-1 and not saturated.
\end{thm}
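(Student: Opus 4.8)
The plan is to handle the two assertions separately; the first is immediate and the second is where the construction does its work.

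\emph{Almost $1$-$1$:} since $\eta$ is a Toeplitz sequence, the Williams theorem quoted above shows $\pi$ is injective on the Toeplitz points of $\overline{\mathcal{O}}(\eta)$, so in particular $\pi^{-1}(\pi(\eta))=\{\eta\}$ and $\pi$ is almost $1$-$1$.

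For non-saturation I would argue by contradiction. First, because $(q_i)$ was chosen with $\sum_i 1/q_i<\infty$, the criterion above ($\eta$ is regular iff $\sum_i 1/q_i$ diverges) makes $\eta$ irregular, i.e.\ $d<1$; so $m(R)=0$ by Lemma~\ref{meas of reg pts}, where $R=\{g\in G:|\pi^{-1}(g)|=1\}$, and hence $A:=G\setminus R=\{g:|\pi^{-1}(g)|\ge 2\}$ has full Haar measure. As $G$ is a compact metrizable abelian monothetic group, Corollary~\ref{positiveU} yields a nonempty open set $U\subseteq 2A-A$; since $R$ is a dense $G_\delta$ in $G$, fix $y\in U\cap R$, and let $x$ be the unique point of $\pi^{-1}(y)$, a Toeplitz sequence. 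Now suppose $(G,\hat 1)$ were a $2$-step topological characteristic factor of $(\overline{\mathcal{O}}(\eta),T)$, witnessed by a dense $G_\delta$ set $\Omega$. Since $\Omega\cap\pi^{-1}(U\cap R)$ is comeager in the nonempty open set $\pi^{-1}(U)$ it is nonempty, so we may take $x\in\Omega$. Writing $y=2a_1-a_2$ with $a_1,a_2\in A$ and putting $g:=a_2-a_1$ gives $y+g=a_1\in A$ and $y+2g=a_2\in A$. In the equicontinuous system the $T\times T^2$-orbit closure of $(y,y)$ in $G\times G$ is $\{(y+h,\,y+2h):h\in G\}$, which contains $(a_1,a_2)$ and equals $(\pi\times\pi)(L_x)$; so there is $(x_1,x_2)\in L_x$ with $\pi(x_1)=a_1,\ \pi(x_2)=a_2$, and saturatedness of $L_x$ forces
\[
\pi^{-1}(a_1)\times\pi^{-1}(a_2)=(\pi\times\pi)^{-1}\{(a_1,a_2)\}\subseteq L_x .
\]

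The contradiction will come from showing this inclusion is impossible. By Lemma~\ref{Oxtoby seq}, for $a\in A$ the fibre $\pi^{-1}(a)$ consists of exactly the five points $\omega_a^{(\sigma)}$, $\sigma\in\Sigma=\{0,1,2,3,4\}$, where $\omega_a^{(\sigma)}$ is the unique point over $a$ that is constant equal to $\sigma$ on $\mathrm{Aper}(a)$ (such a point is determined by its skeleton together with this value); so the inclusion would put all $25$ pairs $(\omega_{a_1}^{(\sigma_1)},\omega_{a_2}^{(\sigma_2)})$ into $L_x$. To refute this I would analyse $L_x$ as follows: if $(x_1,x_2)\in L_x$ is realised along a sequence $(n_i)$ with $\hat n_i\to g$, then for a position $t\in\mathrm{Aper}(a_2)$ the aperiodic value $x_2(t)=\lim_i x(t+2n_i)$ is read off from the scale-$s$ block index $\lfloor(t+2n_i+y_s)/p_{s-1}\rfloor\bmod q_s$ at the scale $s=s(i)\to\infty$ at which that position first becomes $p_s$-periodic, and similarly for $x_1$. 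The sequence $\eta$ was constructed precisely so that the five "marked" block indices $\{0,1,2,q_s/2+1,q_s-1\}$ carrying the five symbols are \emph{not} stable (even up to a bounded error) under the doubling $k\mapsto 2k\bmod q_s$ — e.g.\ $q_s/2+1\mapsto 2$, while $1$ and $q_s-1$ are not of the form $2k\bmod q_s$ at all since $q_s$ is even. After accounting for the bounded carries and for the scale-$s$ block index of $y$, this should force the existence (for a suitable choice of the decomposition $y=2a_1-a_2$) of a symbol $\sigma^\ast$ with $\omega_{a_2}^{(\sigma^\ast)}$ not the second coordinate of any point of $L_x$ over $(a_1,a_2)$, contradicting the inclusion.

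I expect this last step to be the main obstacle: turning the heuristic "the block index roughly doubles under $n_i\mapsto 2n_i$" into a precise, uniform statement. The subtleties are that the defining scales for the $n_i$- and $2n_i$-limits need not coincide; that $x$ is an arbitrary Toeplitz sequence over $y$, whose $p_s$-skeleton is $\eta$'s shifted by $y_s$, which injects the scale-$s$ block index of $y$ into the doubling relation; and the bounded carries coming from the fixed offset $t$ and from $n_i\mapsto 2n_i$. My plan is to handle these by passing to subsequences along which the relevant scales and congruences stabilise, and by exploiting the freedom in the choices of $y\in U\cap R$ and of $a_1,a_2$ to pin down an incompatible $\sigma^\ast$. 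Once such a $\sigma^\ast$ is produced the displayed inclusion fails, so no witnessing $\Omega$ can exist; hence $(G,\hat 1)$ is not a topological characteristic factor of $(\overline{\mathcal{O}}(\eta),T)$, i.e.\ $\pi$ is not saturated.
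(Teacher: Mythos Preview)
Your overall strategy matches the paper's: use irregularity to get $m(A)=1$, apply Corollary~\ref{positiveU} to obtain an open set inside $2A-A$, pick a Toeplitz point $x$ over a regular $y$ in that open set, and show $L_x$ is not $(\pi\times\pi)$-saturated by exhibiting a missing pair over some $(a,b)$ with $y=2a-b$. The reduction to the combinatorial obstruction, and your identification of the doubling of block indices as the mechanism, are both correct.

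There are, however, two genuine gaps in your plan that the paper's proof handles explicitly. First, the paper does \emph{not} work with all of $A$ but with $A_0:=A\cap G_0$, where $G_0=\{(g_i)\in G:0\le g_i<p_i/2\}$; one applies Corollary~\ref{positiveU} to $A_0$ (which still has positive measure) to get $U\subset 2A_0-A_0$. This restriction is essential: when you pass from $k_i$ to $2k_i$ and decompose in the mixed-radix base $(p_j)$, the condition $a_{r(i)},b_{r(i)}<p_{r(i)}/2$ pins down the relation between the carries for $g+h$ and $g+2h$ (if $a_{r(i)}=g_{r(i)}+h_{r(i)}$ then also $b_{r(i)}=g_{r(i)}+2h_{r(i)}$, and similarly in the other case). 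Without this your ``bounded carries'' cannot be controlled uniformly, and the argument breaks into uncontrollable cases.

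Second, your target is slightly too strong. You aim for a symbol $\sigma^\ast$ such that $\omega_{a_2}^{(\sigma^\ast)}$ is \emph{never} the second coordinate of a point of $L_x$ over $(a_1,a_2)$. The paper proves instead a forbidden \emph{pair}: if $(x_1,x_2)\in L_x$ with $\pi(x_1)=a$, $\pi(x_2)=b$ and $x_1=\omega_a^{(0)}$, then $x_2\neq\omega_b^{(2)}$. This already contradicts $\pi^{-1}(a)\times\pi^{-1}(b)\subset L_x$. The precise mechanism is: write $l_i=s_0p_{r(i)}+s_1p_{r(i)+1}+\cdots$ and let $t_i$ be the first index with $s_{t_i}\in\{0,1,2,q/2+1,q-1\}$; the hypothesis $x_1=\omega_a^{(0)}$ forces $s_{t_i}=0$ for all large $i$. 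For $2l_i=s'_0p_{r(i)}+\cdots$ one checks $s'_{t_i}\in\{0,1\}$, so the corresponding first-marked index $t'_i$ satisfies $t'_i\le t_i$; and if $s'_{t'_i}=2$ then $s_{t'_i}\in\{1,\,q/2+1\}$, contradicting the minimality of $t_i$. (The general $x\neq\eta$ case introduces an extra $\pm p_{r(i)}$ shift, handled by a short case analysis that again uses the $A_0$ restriction and the evenness of $q_s$.) Your heuristic ``$q_s/2+1\mapsto 2$'' is pointing at the right phenomenon, but the actual obstruction is the implication $s'_{t'_i}=2\Rightarrow s_{t'_i}\in\{1,q/2+1\}$, which is what rules out the pair $(0,2)$.
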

\begin{proof}
	Let $A=\{g\in G:~|\pi^{-1}(g)|>1\}$.  By Lemma \ref{Oxtoby seq}, $A=\{g\in G:~|\pi^{-1}(g)|=5\}$.
	By Lemma \ref{meas of reg pts}, $m(A)=1$.  Let $G_0=\{(g_1,g_2,\cdots)\in G: 0\leq g_i< \frac{p_i}{2}\}$ and let $A_0=A\cap G_0$. Then $A_0$ has positive Haar measure. Hence $2A_0-A_0$ contains a nonempty set $U$.
	Next we show that for any $x\in \pi^{-1}(U\setminus A)$, $L_x=\overline{\mathcal{O}}((x,x), T\times T^2)$ is not saturated.\\
	
	Let $g=\pi(x)$. Recall that $\pi(x)\in 2A_0-A_0$. Thus $g=2a-b$ for some $a,b\in A_0$. Suppose that $T^{k_i}\eta\rightarrow x_1\in\pi^{-1}(a)$ and $T^{2k_i}\eta\rightarrow x_2\in\pi^{-1}(b)$.\\
	
	Firstly, assume $x=\eta$. Then $2a=b$ and  $L_{\eta}$ is not saturated by the following claim.\\
	
	\noindent{\bf Claim}. If $x_1(n)=0$ for $n\in \text{Aper}(a)$, then $x_2(n)\neq 2$ for $n\in \text{Aper}(b)$.\\
	\begin{proof}[Proof of Claim]
		Let $r(i)$ be a positive integer such that $\widehat{k_i}$ agrees with $a$ on $[1, r(i)]$. Then we can write $k_i$ in the form of
		$$k_i=a_{r(i)}+s_0p_{r(i)}+s_1p_{r(i)+1}+s_2p_{r(i)+2}+\cdots,$$
		where $s_j\in\{0,1,\cdots, q_{r(i)+j+1}-1\}$.   It follows that $T^{k_i}\eta$ has the same $p_{r(i)}$-skeleton as $T^{a_{r(i)}}\eta$. Thus $T^{k_i}\eta$ has the same $p_{r(i)}$-skeleton as $x_1$.  Set
		$$t_i=\min\left\{j\geq 0: s_j\in\{0,1,2, \frac{q_{r(i)+j+1}}{2}+1, q_{r(i)+j+1}-1\}\right\}.$$
		Then for any $n\in\text{Aper}(x_1)\cap [-a_{r(i)}, p_{r(i)}-a_{r(i)})$, $x_1(n)=0$ and
		$$T^{k_i}\eta(n)=\eta(n+k_i)=
		\begin{cases}
			0,& \text{ if }~s_{t_i}=0,\\
			1,& \text{ if }~s_{t_i}=1,\\
			2,& \text{ if }~s_{t_i}=2,\\
			3,&\text{ if }~s_{t_i}=\frac{q_{r(i)+t_i+1}}{2}+1,\\
			4,&\text{ if }~s_{t_i}=q_{r(i)+t_i+1}-1.
		\end{cases}$$
		
		Since $\widehat{k_i}\rightarrow a$, $r(i)\rightarrow \infty$. By passing to some subsequence, we may assume that $r(i)$ is increasing with respect to $i$.
		Note that $-a_{r(i)}\rightarrow-\infty$ and $p_{r(i)}-a_{r(i)}\rightarrow \infty$, otherwise $a=\widehat{m}$ for some integer $m$ and then $a\notin A$.  Thus for all sufficiently large $i$,    $\text{Aper}(x_1)\cap [-a_{r(i)}, p_{r(i)}-a_{r(i)})\neq \emptyset$.  Note that $[-a_{r(i)}, p_{r(i)}-a_{r(i)})$ is nested with respect to $i$. Therefore, there exists $i_0$ such that for any $i\geq i_0$, $s_{t_i}=0$.
		
		Note that
		\begin{eqnarray*}
			2k_i&=&2a_{r(i)}+2s_0p_{r(i)}+2s_1p_{r(i)+1}+2s_2p_{r(i)+2}+\cdots\\
			&=&b_{r(i)}+s'_0p_{r(i)}+s'_1p_{r(i)+1}+s'_2p_{r(i)+2}+\cdots,
		\end{eqnarray*}
		where $s'_j\in\{0,1,\cdots, q_{r(i)+j+1}-1\}$. Then for each $j$,
		$$s'_j=\begin{cases}
			2s_j \text{ or } 2s_j+1, & \text{ if } s_{j}<\frac{q_{r(i)+j+1}}{2},\\
			2s_j-q_{r(i)+j+1} \text{ or } 2s_{j}-q_{r(i)+j+1}+1, & \text{ if } s_{j}\geq\frac{q_{r(i)+j+1}}{2}.
		\end{cases}$$
		Set $$t'_i=\min\left\{j\geq 0: s'_j\in\{0,1,2, \frac{q_{r(i)+j+1}}{2}+1,q_{r(i)+j+1}-1\}\right\}.$$
		Then for any $n\in\text{Aper}(x_1)\cap [-b_{r(i)}, p_{r(i)}-b_{r(i)})$,
		$$T^{2k_i}\eta(n)=\eta(n+2k_i)=
		\begin{cases}
			0,& \text{ if }~s'_{t'_i}=0,\\
			1,& \text{ if }~s'_{t'_i}=1,\\
			2,& \text{ if }~s'_{t'_i}=2,\\
			3,&\text{ if }~s'_{t'_i}=\frac{q_{r(i)+t'_{i}+1}}{2}+1,\\
			4,&\text{ if }~s'_{t'_i}=q_{r(i)+t'_i+1}-1.
		\end{cases}$$
		
		For any $i\geq i_0$,  since $s_{t_{i}}=0$, we have $s'_{t_i}=0$ or $1$. Thus $t'_{i}\leq t_{i}$.  In the following, we assume that $i\geq i_0$. If $s'_{t'_i}=2$, then $t'_{i}<t_i$ and $s_{t'_{i}}=1$ or $\frac{q_{r(i)+t'_{i}+1}}{2}+1$. This contradicts the choice of $t_{i}$ which is the smallest integer $j$ such that $$s_j\in
		\left\{0,1,2, \frac{q_{r(i)+j+1}}{2}+1,q_{r(i)+j+1}-1\right\}.$$
		
		Therefore, for any $i\geq i_0$ and  $n\in\text{Aper}(x_2)\cap [-b_{r(i)}, p_{r(i)}-b_{r(i)})$, $T^{2k_i}(n)\neq 2$. Hence $x_2(n)\neq 2$ for $n\in \text{Aper}(b)$. This proves the claim.
		
	\end{proof}
	
	Now we are going to deal with the general case. We write $a=g+h$ and $b=g+2h$ for some $h\in G$.    In this case we also claim that if $x_1(n)=0$ for $n\in \text{Aper}(a)$, then $x_2(n)\neq 2$ for $n\in \text{Aper}(b)$.
	
	Let $r(i)$ be a positive integer such that $\widehat{k_i}$ agrees with $h$ on $[1, r(i)]$. Furthermore, we may assume that $r(i)$ is increasing with respect to $i$.  Then we can write $k_i$ in the form of
	$$k_i=h_{r(i)}+s_0p_{r(i)}+s_1p_{r(i)+1}+s_2p_{r(i)+2}+\cdots,$$
	where $s_j\in\{0,1,\cdots, q_{r(i)+j+1}-1\}$.    It follows that $T^{k_i}x$ has the same $p_{r(i)}$-skeleton as $T^{h_{r(i)}}x$. Thus $T^{k_i}x$ has the same $p_{r(i)}$-skeleton as $x_1$.  Then for any $n\in\text{Aper}(x_1)\cap [-a_{r(i)}, p_{r(i)}-a_{r(i)})$, $x_1(n)=0$ and
	$$n+k_i\in [-g_{r(i)}, p_{r(i)}-g_{r(i)})+l_i,$$
	where
	\begin{equation*}
		l_i=\begin{cases}
			k_i-h_{r(i)}, & \text{ if }~a_{r(i)}=g_{r(i)}+h_{(r(i))},\\
			k_{i}-h_{r(i)}+p_{r(i)}, & \text{ if }~a_{r(i)}=g_{r(i)}+h_{(r(i))}-p_{r(i)}.
		\end{cases}
	\end{equation*}
	
	Since $a_{r(i)}, b_{r(i)}< \frac{p_{r(i)}}{2}$, if $a_{r(i)}=g_{r(i)}+h_{(r(i))} $ then $b_{r(i)}=g_{r(i)}+2h_{(r(i))}$, and if $a_{r(i)}=g_{r(i)}+h_{(r(i))}-p_{r(i)} $ then $b_{r(i)}=g_{r(i)}+2h_{(r(i))}-p_{r(i)}$. Thus for any $m\in\text{Aper}(x_2)\cap [-b_{r(i)}, p_{r(i)}-b_{r(i)})$,
	$$m+2k_i\in [-g_{r(i)}, p_{r(i)}-g_{r(i)})+l'_i,$$
	where
	\begin{equation*}
		l'_i=\begin{cases}
			2(k_i-h_{r(i)}), & \text{ if }~a_{r(i)}=g_{r(i)}+h_{(r(i))},\\
			2(k_{i}-h_{r(i)})+p_{r(i)}, & \text{ if }~a_{r(i)}=g_{r(i)}+h_{(r(i))}-p_{r(i)}.
		\end{cases}
	\end{equation*}
	
	Write $l_i=s_0p_{r(i)}+s_1p_{r(i)+1}+s_2p_{r(i)+2}+\cdots$,  where $s_j\in\{0,1,\cdots, q_{r(i)+j+1}-1\}$.     Set
	$$t_i=\min\left\{j\geq 0: s_j\in\{0,1,2, \frac{q_{r(i)+j+1}}{2}+1, q_{r(i)+j+1}-1\}\right\}.$$
	Then for any $n\in\text{Aper}(x_1)\cap [-a_{r(i)}, p_{r(i)}-a_{r(i)})$, $x_1(n)=0$ and
	$$T^{k_i}x(n)=x(n+k_i)=
	\begin{cases}
		0,& \text{ if }~s_{t_i}=0,\\
		1,& \text{ if }~s_{t_i}=1,\\
		2,& \text{ if }~s_{t_i}=2,\\
		3,&\text{ if }~s_{t_i}=\frac{q_{r(i)+t_i+1}}{2}+1,\\
		4,&\text{ if }~s_{t_i}=q_{r(i)+t_i+1}-1.
	\end{cases}$$
	
	\noindent {\bf Case 1}. $a_{r(i)}=g_{r(i)}+h_{(r(i))}$.
	
	Then we have $l'_i=2l_i$. Similar to the case that $x=\eta$, we have $T^{2k_i}x(n)\neq 2$ for any $n\in\text{Aper}(x_2)\cap [-b_{r(i)}, p_{r(i)}-b_{r(i)})$.
	
	\noindent{\bf Case 2}. $a_{r(i)}=g_{r(i)}+h_{(r(i))}-p_{r(i)}$.
	
	In this case, we have $l'_i=2l_i-p_i$.  We write $2l_i=s'_0p_{r(i)}+s'_1p_{r(i)+1}+s'_2p_{r(i)+2}+\cdots$,  where $s'_j\in\{0,1,\cdots, q_{r(i)+j+1}-1\}$.
	
	\noindent {\bf Case 2a}. $s'_0=0$.
	
	Then $l'_i=(q_{r(i)+1}-1)p_{r(i)}+sp_{r(i)+1}$ for some nonnegative integer $s$. For any $n\in\text{Aper}(x_2)\cap [-b_{r(i)}, p_{r(i)}-b_{r(i)})$, we have $T^{2k_i}x(n)=0$.
	
	\noindent {\bf Case 2b}. $s'_0>0$.
	
	Then $l'_i=(s'_0-1)p_{r(i)}+s'_1p_{r(i)+1}+s'_2p_{r(i)+2}+\cdots$.  Set $$t'_i=\min\left\{j\geq 0: s'_j\in\{0,1,2, \frac{q_{r(i)+j+1}}{2}+1,q_{r(i)+j+1}-1\}\right\}.$$
	
	Note that $s'_0=2s_0$ or $2s_0-p_{r(i)}$ and for each $j>0$,
	$$s'_j=\begin{cases}
		2s_j \text{ or } 2s_j+1, & \text{ if } s_{j}<\frac{q_{r(i)+j+1}}{2},\\
		2s_j-q_{r(i)+j+1} \text{ or } 2s_{j}-q_{r(i)+j+1}+1, & \text{ if } s_{j}\geq\frac{q_{r(i)+j+1}}{2}.
	\end{cases}$$
	
	If $t'_i=0$ and $s'_0-1=2$, then $s'_0=3$. This is impossible since $p_{r(i)}$ is an even number. If $t'_i>0$, then $s'_{t'_i}\neq 2$ by the same reason as the discussion in case of $x=\eta$. In any case, we have $T^{2k_i}x(n)\neq 2$ for any $n\in\text{Aper}(x_2)\cap [-b_{r(i)}, p_{r(i)}-b_{r(i)})$.
	
	By the above discussion, we have $x_2(n)\neq 2$ for $n\in\text{Aper}(b)$. Hence $L_{x}$ is not saturated.
	
\end{proof}

\noindent {\bf Acknowledgements:} We thank E. Glasner for asking the question if there is a metric non-saturation example.
We also thank W. Huang and S. Shao for useful discussions.



\begin{thebibliography}{SS}

\bibitem{AGHSY10} E. Akin ,  E. Glasner, W. Huang, S. Shao and X. Ye. {\it Sufficient conditions
under which a transitive system is chaotic}.  Ergod. Th.  Dynam. Sys. {\bf 30}(2010), 1277-1310.

\bibitem{Aus} J. Auslander, {\it Minimal flows and their extensions}, North-Holland Mathematics
Studies {\bf 153}, 1988, North-Holland, Amsterdam.


\bibitem{JandG02} J. Auslander and E. Glasner, \textit{The distal order of a minimal flow},
Isr. J. Math, {\bf 127}(2002),61-80.

\bibitem{CS-1} F. Cai and S. Shao, \textit{Topological characteristic factors along cubes of minimal systems}.
 Discrete Contin. Dyn. Syst.  {\bf 39}(2019),  no. 9, 5301-5317.

\bibitem{CS-2} F. Cai and S. Shao, \textit{Topological characteristic factors and independence along arithmetic progressions},
Israel J of Math., to appear. arXiv:2002.10963.

\bibitem{DDMSY} P. Dong, S. Donoso, A. Maass, S. Shao and X. Ye, {\it Infinite-step nilsystems, independence and complexity}, Ergodic Theory Dynam. Systems, {\bf 33} (2013), 118-143.

\bibitem{EG} R. Ellis and W. H. Gottschalk, {\it Homomorphisms of transformation groups},
Transactions of the American Mathematical Society, {\bf 94} (1960), 272-281.

\bibitem{EGS75} R. Ellis, S. Glasner and L. Shapiro. {\it Proximal-isometric flows}. Adv. Math., {\bf 17}(1975), 213-260.

\bibitem{Foll} G. B. Folland.  {\it A course in abstract harmonic analysis}. second edition. CRC. Press. 2016.


\bibitem{Fur63} H. Furstenberg, {\it The structure of distal flows}, Amer. J. Math. {\bf 85} (1963).

\bibitem{G94} E. Glasner, \textit{A topological ergodic decompositions and applications to products of a
minimal transformation}, J. Anal. Math., {\bf 64}(1994), 241-262.
		

\bibitem{GHSWY} E. Glasner, W. Huang, S. Shao, B. Weiss and X. Ye, \textit{Topological characteristic
factors and nilsystems}, arXiv:2006.12385 [math.DS] preprint, 2020.

\bibitem{GGY18} E. Glasner, Y. Gutman and X. Ye, \textit{Higher order regionally proximal equivalence relations for general
group actions},  Adv. Math.,  {\bf 333}  (2018), 1004-1041.


\bibitem{HK} B. Host and B. Kra, {\it Nonconventional ergodic averages and nilmanifolds}, Ann. of Math. (2), {\bf 161}
(2005), no. 1, 397-488.

\bibitem{HKM} B. Host, B. Kra and A. Maass, \textit{Nilsequences and a structure theory for topological
dynamical systems}, Adv. Math., {\bf 224} (2010) 103-129.



\bibitem{JK} K. Jacobs and M. Keane, \textit{0-1 sequences of Toeplitz type}, Z. Wahrscheinlichkeitstheorie und
Verw. Gebiete, {\bf 13}(1969), 123-131.


\bibitem{LQ}  Z. Lian and J. Qiu,  \textit{Pro-nilfactors of the space of arithmetic progressions in topological dynamical systems}, arXiv:2010.01803.

\bibitem{Mc78} D.C. McMahon, \textit{Relativized weak disjointness and relatively invariant measures}, Trans. Amer. Math. Soc., {\bf 236} (1978), 225--237.






\bibitem{OX} J.C. Oxtoby, \textit{Ergodic sets}, Bull. Amer. Math. Soc., {\bf 58}(1952), 116-136.

\bibitem{SY} S. Shao and X. Ye, \textit{Regionally proximal relation of order $d$ is an
equivalence one for minimal systems and a combinatorial consequence}, Adv. Math., {\bf 231} (2012), 1786-1817.


\bibitem{TY} S. Tu and  X. Ye. {\it Dynamical Parallelepipeds in Minimal Systems}. J. Dyn. Diff. Equat. {\bf 25}(2013), 765-776.


\bibitem{Veech} W. A. Veech, {\it Point-distal flows}, Amer. J. Math. {\bf 92} (1970), 205-242.

\bibitem{V83}W. A. Veech, {\it Topological dynamics},
Bull. Amer. Math. Soc.,  {\bf series 83}
(1977), 775--830.

\bibitem{Vr} J. de Vries,  {\it Elements of Topological Dynamics},  Kluwer Academic Publishers, Dordrecht, 1993.

\bibitem{Williams} S. Williams, \textit{Toeplitz minimal flows which are not uniquely ergodic}, Z. Wahrsch. Verw. Gebiete
{\bf 67}(1984), no. 1, 95-107.


\bibitem{Zieg} T. Ziegler, {\it Universal characteristic factors and Furstenberg averages}, J. Amer. Math. Soc. {\bf 20}(2007),
no.1, 53?97.

\end{thebibliography}
\end{document}